\documentclass[10pt,reqno]{amsart}

\usepackage[T1]{fontenc}
\usepackage{lmodern}
\usepackage{microtype}
\usepackage{mathtools}
\usepackage{amssymb}
\usepackage{mathrsfs}
\usepackage{esint}
\usepackage[
  letterpaper,
  left=0.90in,
  right=0.90in,
  top=0.85in,
  bottom=0.90in
]{geometry}
\usepackage{xcolor}
\usepackage[
  colorlinks=true,
  linkcolor=blue,
  citecolor=blue,
  urlcolor=blue,
  hypertexnames=false
]{hyperref}
\usepackage[nameinlink,capitalize,noabbrev]{cleveref}

\allowdisplaybreaks
\numberwithin{equation}{section}

\newtheorem{theorem}{Theorem}[section]
\newtheorem{proposition}[theorem]{Proposition}
\newtheorem{lemma}[theorem]{Lemma}
\newtheorem{corollary}[theorem]{Corollary}

\theoremstyle{definition}

\newtheorem{example}[theorem]{Example}
\theoremstyle{remark}
\newtheorem{remark}[theorem]{Remark}

\newcommand{\C}{\mathbb C}
\newcommand{\N}{\mathbb N}
\newcommand{\dd}{\,d}
\newcommand{\Hol}{\mathcal H}
\newcommand{\loc}{\mathrm{loc}}
\newcommand{\dist}{\operatorname{dist}}
\newcommand{\diag}{\operatorname{diag}}
\newcommand{\supp}{\operatorname{supp}}
\newcommand{\IDA}{\operatorname{IDA}}
\newcommand{\Piabs}{\Pi}
\newcommand{\Xk}{\mathfrak D_{p,q}^{\,r}}
\newcommand{\xk}{\mathfrak d_{p,q}^{\,r}}
\newcommand{\avg}{\mathop{\fint}}

\newcommand{\papertitle}{Absolutely summing Toeplitz operators on weighted Fock spaces}
\title[Absolutely summing Toeplitz operators]{\papertitle}
\author{Chunxu Xu and Jianxiang Dong}
\date{}

\hypersetup{
  pdftitle={Absolutely summing Toeplitz operators on weighted Fock spaces},
  pdfauthor={Chunxu Xu and Jianxiang Dong},
  pdfsubject={Absolutely summing operators on weighted Fock spaces},
  pdfkeywords={absolutely summing operator, Toeplitz operator, weighted Fock space, IDA symbol, diagonal ideal}
}

\begin{document}

\thispagestyle{plain}
\begin{center}
  {\Large\bfseries\MakeUppercase{\papertitle}\par}
  \vspace{1.15em}
  {\normalsize Chunxu Xu${}^{a}$, Jianxiang Dong${}^{b,*}$\par}
  \vspace{0.75em}
  {\small\itshape
  ${}^{a}$School of Science, Nanjing Forestry University,
  Nanjing 210037, P.R. China\par
  ${}^{b}$School of Mathematics and Statistics, Tianshui Normal University,
  Tianshui 741000, P.R. China\par}
\end{center}

\vspace{0.7em}
\begin{quotation}
\small
\noindent\textsc{Abstract.}
We study absolutely $r$-summing Toeplitz operators
$T_f:F_\varphi^p\to F_\varphi^q$ with complex symbols.
The weight has a uniformly positive and bounded real Hessian.
The main difficulty is that local distance to holomorphic functions does
not detect an entire part of the symbol, while the Toeplitz operator does.
Using the complete classification of absolutely summing diagonal maps, we
characterize absolutely summing Fock--Carleson embeddings.  An IDA
decomposition, a Rademacher argument, and local scalar recovery then detect
the missing analytic part of a complex symbol.  We obtain equivalent
criteria in terms of the local $L^q$ mean, the complex ball average, and the
Berezin transform for every $1\le p,q,r<\infty$.  At $q=1$ the criterion is
$L^{s_p}$, where $s_p=2p/(3p-2)$ for $p\le2$ and $s_p=1$ for $p>2$.
We also obtain a sectorial-symbol criterion without an IDA assumption and
prove sharpness by separated models.
\end{quotation}

\noindent\textit{Keywords.}
Absolutely summing operator, Toeplitz operator, weighted Fock space,
IDA symbol, diagonal ideal.

\smallskip
\noindent\textit{2020 Mathematics Subject Classification.}
Primary 47B35; Secondary 46B28, 47B10, 32A37.

\medskip

\begingroup
\renewcommand{\thefootnote}{*}
\footnotetext[1]{Corresponding author.\newline
Email addresses:
\href{mailto:1968385450@qq.com}{1968385450@qq.com} (Chunxu Xu),
\href{mailto:jianxd@tsnu.edu.cn}{jianxd@tsnu.edu.cn} (Jianxiang Dong).}
\endgroup

\markboth{CHUNXU XU AND JIANXIANG DONG}
{ABSOLUTELY SUMMING TOEPLITZ OPERATORS}

\section{Introduction}

\subsection{Motivation and the problem}

Let $v$ be Lebesgue measure on $\C^n$.  We identify $\C^n$ with
$\mathbb R^{2n}$.  Let $\varphi\in C^2(\C^n)$ have a uniformly positive
and bounded real Hessian.  This is the usual Hessian matrix on
$\mathbb R^{2n}$.  For an open set $U$, write
$\Hol(U)=\{h:h\text{ is holomorphic on }U\}$.  The space
$L^s_{\loc}(U)$ consists of the functions that belong to $L^s$ on every
compact subset of $U$.  We use the weighted spaces
\[
L_\varphi^s=L^s(\C^n,e^{-s\varphi}\dd v),
\qquad
F_\varphi^s=L_\varphi^s\cap\Hol(\C^n).
\]
Let $P:L_\varphi^2\to F_\varphi^2$ be the orthogonal Fock projection.
Let $K$ be its reproducing kernel.  Put
\[
k_z(w)=\frac{K(w,z)}{K(z,z)^{1/2}},
\qquad
\Gamma=\operatorname{span}\{k_z:z\in\C^n\}.
\]
Thus $k_z$ is the normalized kernel.  Define
\begin{equation}\label{eq:symbol-domain-intro}
\mathcal D_\varphi
=\{f\text{ measurable}:fk_z\in L_\varphi^1
\text{ for every }z\in\C^n\}.
\end{equation}
For $f\in\mathcal D_\varphi$, set on $\Gamma$
\[
M_fg=fg,
\qquad
T_fg=P(fg),
\qquad
H_fg=(I-P)(fg).
\]
These are the standard Segal--Bargmann and Fock-space conventions
\cite{Folland1989,Zhu2012}.

For positive symbols, local masses and Fock--Carleson measures govern
boundedness and compactness
\cite{HuLv2011,HuLv2014,IsralowitzZhu2010,
IsralowitzVirtanenWolf2015,SchusterVarolin2012}.
For complex symbols, cancellation is an extra difficulty.  BMO and mean
oscillation give useful criteria
\cite{Bauer2005,BauerCoburnIsralowitz2010,CoburnIsralowitzLi2011}.
Localization gives a complementary operator-theoretic approach
\cite{HuLvWick2018}.  However, the operator $T_f$ does not directly recover
the positive measure $|f|^q\dd v$.  The complex cancellations remain.

Local distance to holomorphic functions measures the nonanalytic part of a
symbol.  This method starts with Luecking's work \cite{Luecking1992}.  Hu
and Virtanen developed it on Fock spaces \cite{HuVirtanen2023}.  Let
$B(z,\delta)$ be the Euclidean ball with center $z$ and radius $\delta$.
With
$\avg_B u\dd v=|B|^{-1}\int_Bu\dd v$, put
\[
G_{q,\delta}f(z)
=
\inf_{h\in\Hol(B(z,\delta))}
\left(
\avg_{B(z,\delta)}|f(w)-h(w)|^q\dd v(w)
\right)^{1/q}.
\]
The quantity $G_{q,\delta}f(z)$ is the local $L^q$-distance from $f$ to
holomorphic functions on $B(z,\delta)$.  It is called the integral distance
to analytic functions, or IDA.  We also use
\[
M_{q,\delta}f(z)
=
\left(
\avg_{B(z,\delta)}|f(w)|^q\dd v(w)
\right)^{1/q},
\qquad
a_\delta f(z)=\avg_{B(z,\delta)}f(w)\dd v(w).
\]
We call $M_{q,\delta}f$ the normalized local $q$-mean of $f$.
It measures the local size of $f$.  We call $a_\delta f$ the complex ball
average of $f$.  It keeps the phase and cancellation of a complex symbol.
Thus $G_{q,\delta}f$ measures the nonanalytic part, while
$M_{q,\delta}f$ and $a_\delta f$ provide two forms of local scalar data.
Fock-space Hankel ideals have also been
studied through Hankel forms, symmetric ideals, mean oscillation, and the
Berger--Coburn phenomenon
\cite{JansonPeetreRochberg1987,Farnsworth2011,HuVirtanen2022,
HuVirtanenCorr2023}.  Mixed-exponent and compactness results appear in
\cite{HuWang2018,HaggerVirtanen2021}.  IDA gives one framework for the
nonanalytic part of these results.

There is a basic obstruction.  If $h$ is entire, then
\[
G_{q,\delta}(f+h)=G_{q,\delta}f,
\qquad
T_{f+h}=T_f+M_h.
\]
Thus IDA does not see the entire part of a symbol, but the Toeplitz operator
does.  For example, $G_{q,\delta}z_1=0$, while multiplication by $z_1$ is
unbounded on the classical Fock space.  The purpose of this paper is to
recover this missing analytic part by scalar data.  We use a complex ball
average or the Berezin transform.

Schatten ideals are described by singular values \cite{Simon2005}.
Absolute summability is different.  It is defined through weakly summable
finite sequences.  Its foundations are given in
\cite{Pietsch1967,LindenstraussPelczynski1968,Pietsch1980,
DiestelJarchowTonge1995}.  For a Banach space $X$, write
$B_X=\{x\in X:\|x\|_X\le1\}$.  A bounded map $A:X\to Y$ is $r$-summing if
\[
\left(\sum_{j=1}^m\|Ax_j\|_Y^r\right)^{1/r}
\le C
\sup_{x^*\in B_{X^*}}
\left(\sum_{j=1}^m|x^*(x_j)|^r\right)^{1/r}
\]
for every finite family $(x_j)$.  The least possible constant is denoted by
$\pi_r(A)$.  The class of these maps is the operator ideal
$\Piabs_r(X,Y)$ \cite{DiestelJarchowTonge1995}.

Absolutely summing Carleson embeddings have been studied on Hardy,
Bergman, and Fock spaces
\cite{ChenHeWang2025,HeJreisLefevreLou2024,LefevreRodriguez2018}.
Chen, He, and Wang treated the same-exponent embedding on weighted Fock
spaces over $\C$ \cite{ChenHeWang2025}.  Wang and Shi obtained the
same-exponent theorem on generalized Fock spaces over $\C^n$
\cite{WangShi2026}.  Both results assume $1<p<\infty$.  The positive
Toeplitz theorem in \cite{HuWang2025} is also same-exponent.  The Hankel
theorem in \cite{HuLv2026} concerns the classical Fock space and maps
$F^p$ into $L^p$; its endpoint $q=1$ therefore has $p=1$.  None of these
results gives the full off-diagonal range $p\ne q$.  In particular, they
do not treat arbitrary $F_\varphi^p\to L_\varphi^1$, nor do they give a
complex-symbol Toeplitz criterion in the full range considered here.

Fan, He, Wang, and Zeng recently completed the classification of absolutely
summing diagonal maps, including the range left open by Garling, and applied
it to weighted Bergman Carleson embeddings and Hankel operators
\cite{FanHeWangZeng2026}.  We use their completed diagonal index as an
input.  Our main work begins after this sequence-space step.  We establish a
Fock--Carleson theorem for general weights with controlled real Hessian.  We
then treat complex Toeplitz symbols by combining IDA decomposition with an
operator-diagonal argument and local scalar recovery.  This produces the
Toeplitz--Hankel graph equivalence, the direct $q=1$ endpoint, and criteria in
terms of the complex ball average and the Berezin transform.  We also obtain
a criterion for sectorial symbols without a matching IDA assumption, as well
as compactness, analytic rigidity, and sharp separated models.

We now define the local space used in our main problem.  Fix
$1\le p,q,r<\infty$.  For a scalar sequence $b=(b_j)$, let
\[
D_b:\ell^p\longrightarrow\ell^q,
\qquad D_b(c_j)=(b_jc_j).
\]
Here $\ell^p$ is the usual scalar sequence space.  Define
\[
\mathfrak d_{p,q}^{\,r}
=\{b:D_b\in\Piabs_r(\ell^p,\ell^q)\},
\qquad
\|b\|_{\mathfrak d_{p,q}^{\,r}}=\pi_r(D_b).
\]
A $\delta$-lattice is a sequence $\{a_j\}$ whose $\delta$-balls cover
$\C^n$ and whose smaller $c\delta$-balls are pairwise disjoint for one
fixed $c\in(0,1)$.  Fix one such lattice and an associated measurable
partition $\C^n=\bigsqcup_jE_j$, with each $E_j$ contained in a fixed
multiple of $B(a_j,\delta)$ and containing a fixed smaller ball about
$a_j$.  For a measurable function $u$, set
\[
u_j^\#=\operatorname*{ess\,sup}_{z\in E_j}|u(z)|,
\qquad
\|u\|_{\mathfrak D_{p,q}^{\,r}}
=\|\{u_j^\#\}\|_{\mathfrak d_{p,q}^{\,r}}.
\]
The essential supremum is the smallest number that bounds $|u|$ almost
everywhere on the cell.  We call $\mathfrak D_{p,q}^{\,r}$ a
lattice-amalgam space because it combines local cell suprema by one global
sequence norm.  Different fixed admissible lattices give equivalent norms;
this is proved in \cref{lem:discretization}.

Our problem is to decide when the following equivalence holds:
\begin{equation*}
\boxed{\begin{gathered}
1\le p,q,r<\infty,\quad
f\in\mathcal D_\varphi\cap L^q_{\loc},\quad
G_{q,\delta}f\in\mathfrak D_{p,q}^{\,r},\\
T_f:F_\varphi^p\to F_\varphi^q\text{ is in }\Piabs_r
\quad\stackrel{?}{\Longleftrightarrow}\quad
M_{q,\delta}f\in\mathfrak D_{p,q}^{\,r}.
\end{gathered}}
\end{equation*}
The question includes three independent indices.  Fock-space inclusions do
not reduce it to $p=q$.  The same three-index structure already occurs for
the diagonal map $D_b$.  The ideal $\mathfrak d_{p,q}^{\,r}$ changes with
$(p,q,r)$.  One boundary case is a logarithmic Orlicz space.  We define it in
\cref{sec:diagonal-index}.  The endpoint $q=1$ needs a separate argument
because the usual reflexive duality is not available.  We instead prove a
direct $L^1$ estimate for a $\bar\partial$ solution.

\subsection{Main results}

Our starting point is the diagonal ideal and its lattice-amalgam realization
$\Xk$ defined above.  A Borel measure $\mu$ is locally finite when
$\mu(K)<\infty$ for every compact set $K$.  For a positive locally finite
Borel measure $\mu$, let
$J_\mu:F_\varphi^p\to L_\varphi^q(\mu)$ denote the embedding
$J_\mu g=g$.  We first prove
\begin{equation*}
J_\mu\in\Piabs_r(F_\varphi^p,L_\varphi^q(\mu))
\iff
\{\mu(B(a_j,\delta))^{1/q}\}_j\in\mathfrak d_{p,q}^{\,r}.
\end{equation*}
The proof is quantitative and uniform over the lattice.  It does not use an
explicit formula for the diagonal ideal.  Next, an IDA decomposition yields
\begin{equation*}
H_f\in\Piabs_r(F_\varphi^p,L_\varphi^q)
\iff G_{q,\delta}f\in\Xk,
\qquad 1\le q<\infty.
\end{equation*}
Finally, Rademacher extraction recovers the analytic component from the
operator diagonal.  Let $\iota:F_\varphi^q\hookrightarrow L_\varphi^q$
be the natural inclusion.  The identities
\begin{equation}\label{eq:basic-identities-intro}
M_f=\iota T_f+H_f,
\qquad
T_f=PM_f,
\qquad
H_f=(I-P)M_f,
\end{equation}
then complete the transference from multiplication to Toeplitz operators.

For $f\in\mathcal D_\varphi$, its Berezin transform is
\begin{equation*}
\widetilde f(z)
=\int_{\C^n}f(w)|k_z(w)|^2e^{-2\varphi(w)}\dd v(w).
\end{equation*}
For Banach spaces $X$ and $Y$, $X\oplus_qY$ denotes their product with
norm $(\|x\|_X^q+\|y\|_Y^q)^{1/q}$.

Define the matching IDA class by
\begin{equation}\label{eq:IDA-class-intro}
\IDA_{p,q}^{r}
=\{f\in\mathcal D_\varphi\cap L^q_{\loc}:G_{q,\delta}f\in\Xk\}.
\end{equation}
An \emph{IDA symbol of type $(p,q,r)$} means an element of this class.
The term \emph{matching} means that the IDA quantity is measured in the
same space $\mathfrak D_{p,q}^{\,r}$ as the local size of the symbol.
By \cref{lem:discretization}, this class is independent of the fixed
radius $\delta>0$.

\begin{theorem}\label{thm:intro-main}
Assume that $\varphi$ satisfies \eqref{eq:hessian}.  Let $1\le p,q,r<\infty$ and fix $\delta>0$.  Suppose that
\[
f\in\mathcal D_\varphi\cap L^q_{\loc}(\C^n).
\]
Define
\[
\mathscr T_fg=(T_fg,H_fg),
\qquad
\mathscr T_f:F_\varphi^p\longrightarrow
F_\varphi^q\oplus_qL_\varphi^q.
\]
The operators $M_f,T_f,H_f$, and $\mathscr T_f$ are initially defined on
$\Gamma$ as in \eqref{eq:symbol-domain-intro}.
Then the following conditions are equivalent:
\begin{enumerate}
\item $M_f\in\Piabs_r(F_\varphi^p,L_\varphi^q)$;
\item $\mathscr T_f\in\Piabs_r(F_\varphi^p,F_\varphi^q\oplus_qL_\varphi^q)$;
\item $T_f\in\Piabs_r(F_\varphi^p,F_\varphi^q)$ and
$H_f\in\Piabs_r(F_\varphi^p,L_\varphi^q)$;
\item $M_{q,\delta}f\in\Xk$;
\item $a_\delta f\in\Xk$ and $G_{q,\delta}f\in\Xk$.
\end{enumerate}
Whenever these conditions hold, the four core operators have unique bounded
extensions, and the identities in \eqref{eq:basic-identities-intro} hold on
all of $F_\varphi^p$.
Moreover,
\begin{equation}\label{eq:intro-main-norm}
\begin{aligned}
\pi_r(M_f)
&\asymp \pi_r(\mathscr T_f)
\asymp \pi_r(T_f)+\pi_r(H_f)\\
&\asymp \|M_{q,\delta}f\|_{\Xk}
\asymp
\|a_\delta f\|_{\Xk}+\|G_{q,\delta}f\|_{\Xk}.
\end{aligned}
\end{equation}
They are also equivalent to
\begin{enumerate}\setcounter{enumi}{5}
\item $\widetilde f\in\Xk$ and $G_{q,\delta}f\in\Xk$,
\end{enumerate}
and
\begin{equation}\label{eq:intro-main-scalar-norm}
\|M_{q,\delta}f\|_{\Xk}
\asymp \|\widetilde f\|_{\Xk}+\|G_{q,\delta}f\|_{\Xk}.
\end{equation}
The constants may depend on $p,q,r,n$, the Hessian bounds, the fixed radius
$\delta$, and fixed lattice separation and overlap parameters.  They are
independent of $f$, of ball centers, and of lattice translates.
\end{theorem}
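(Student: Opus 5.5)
We organize the proof around the multiplication operator $M_f$ and the Carleson characterization. The first step is (1)$\Leftrightarrow$(4). Put $d\mu_f=|f|^q\dd v$. Then
\[
\|M_fg\|_{L^q_\varphi}^q=\int|g|^qe^{-q\varphi}\,d\mu_f ,
\]
so $M_f$ is, up to an isometry, the embedding $J_{\mu_f}:F^p_\varphi\to L^q_\varphi(\mu_f)$. The Fock--Carleson theorem stated before the main theorem then gives $\pi_r(M_f)\asymp\|\{\mu_f(B(a_j,\delta))^{1/q}\}\|_{\xk}$. Since $\mu_f(B(a_j,\delta))^{1/q}\asymp M_{q,\delta}f(a_j)$, it remains to pass from lattice point values to the cell suprema defining $\Xk$. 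This is \cref{lem:discretization}: $M_{q,\delta}f$ on $E_j$ is dominated by $M_{q,\delta'}f(a_j)$ for a larger radius, and radii are interchangeable up to constants. The density of $\Gamma$ and the bounded extension follow from the fact that $\pi_r$-bounded maps are bounded.

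The second step is (1)$\Leftrightarrow$(2)$\Leftrightarrow$(3). On $\Gamma$ we have $\mathscr T_f=(PM_f,(I-P)M_f)$. Since $P$ is bounded on $L^q_\varphi$ only for $q>1$, we avoid using $P$ directly. For (1)$\Rightarrow$(3), the quoted Hankel result, $H_f\in\Piabs_r\iff G_{q,\delta}f\in\Xk$, combined with $G_{q,\delta}f\le M_{q,\delta}f$, gives $\pi_r(H_f)\lesssim\pi_r(M_f)$. Then $\iota T_f=M_f-H_f$, and the ideal property gives $\pi_r(T_f)\lesssim\pi_r(M_f)$. Conversely, $M_f=\iota T_f+H_f$ gives (3)$\Rightarrow$(1). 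Finally, (2)$\Leftrightarrow$(3) is a general fact for $\oplus_q$ sums of two operators, via the coordinate projections and injections.

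The third step is (4)$\Leftrightarrow$(5) together with the scalar recovery. The direction (4)$\Rightarrow$(5) is immediate, since $|a_\delta f|\le M_{q,\delta}f$ and $G_{q,\delta}f\le M_{q,\delta}f$. For the converse, use the IDA decomposition $f=f_1+f_2$. Here $f_2$ is controlled by $G$, in the sense that $M_{q,\delta}f_2\lesssim G_{q,\delta'}f$ pointwise, and $f_1\in C^1$ satisfies $|\bar\partial f_1|\lesssim G$. This reduces the problem to $f_1$, which is locally almost holomorphic. On each ball, $f_1$ differs from a holomorphic $h_z$ by at most $C\,G(z)$ in sup norm. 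The mean value property gives $h_z(z)=a_\delta h_z(z)$, so
\[
M_{q,\delta}f_1(z)\lesssim |a_\delta f(z)|+G_{q,\delta'}f(z)+\text{(oscillation of }h_z\text{ on the ball)} .
\]
The oscillation term is the main obstacle. Cauchy estimates bound it by the sup of $|h_z|$ on a larger ball, which is circular. I would try to handle it through the operator-diagonal / Rademacher argument the abstract alludes to. Test $T_f$ against $\sum_j\epsilon_jk_{a_j}$ with Rademacher signs. The weak $\ell^r$ norm of $(k_{a_j})$ is controlled by the diagonal ideal duality. Averaging over signs isolates the diagonal terms $\langle T_fk_{a_j},k_{a_j}\rangle=\widetilde f(a_j)$, whose size is comparable to $|h_{a_j}|$ on the ball once the $G$-errors are removed. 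The result is an estimate of the form
\[
\|M_{q,\delta}f_1\|_{\Xk}\lesssim\|\text{scalar}\|_{\Xk}+\|G\|_{\Xk} ,
\]
where the scalar data is either $a_\delta f$ or $\widetilde f$. This should close the loop.

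For (6), compare $\widetilde f$ with $a_\delta f$ modulo $G$ in both directions. The difference $\widetilde{f_1}-f_1$ is controlled by $\bar\partial f_1$, which is bounded by $G$, because the kernel $|k_z|^2e^{-2\varphi}$ has Gaussian decay. The tails are summed over lattice shells. Here one uses that the ideal $\xk$ is stable under lattice translations with at most polynomial growth of the constants, which is absorbed by the Gaussian decay. The contribution $\widetilde{f_2}$ is controlled by the same shell sum applied to $M f_2\lesssim G$. This gives \eqref{eq:intro-main-scalar-norm}. All constants come from the Carleson theorem, the discretization lemma and the kernel estimates, so they are uniform in $f$ and in lattice translates.
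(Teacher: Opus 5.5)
Your Steps 1 and 2 are sound. Step 2 does rest on a misconception, though. Under \eqref{eq:hessian} the projection $P$ is bounded on $L^q_\varphi$ for every $1\le q<\infty$, including $q=1$, and the Hankel theorem you invoke is itself proved using this fact. The paper simply factors $\mathscr T_f=WM_f$ with $Wu=(Pu,(I-P)u)$, and $M_f=\Sigma\mathscr T_f$ with $\Sigma(u,v)=\iota u+v$.

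The genuine gap is (5)$\Rightarrow$(4). Since you reduce (6) to (5), the gap also affects (6)$\Rightarrow$(4). Your remedy for the oscillation term cannot work, for two reasons.
\begin{itemize}
\item Hypothesis (5) is a condition on two scalar functions. It says nothing about $T_f$ being $r$-summing, which is what you are trying to derive via (4)$\Rightarrow$(1)$\Rightarrow$(3). So there is no summing operator to which Rademacher averaging can be applied.
\item Even when $T_f\in\Piabs_r$ is known, diagonal extraction (\cref{prop:operator-diagonal}) only gives $\|\widetilde f\|_{\Xk}\lesssim\pi_r(T_f)$. It bounds the diagonal by the operator; it never bounds $M_{q,\delta}f$ by scalar data. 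Your step ``$\widetilde f(a_j)$ is comparable to $|h_{a_j}|$ on the ball'' is the oscillation problem restated: one value of $h$ does not control $h$ on the whole ball.
\end{itemize}

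The missing idea is elementary and local, and needs neither the IDA decomposition nor any operator theory: apply the mean value property at every point near $z$, not only at $z$.
\begin{itemize}
\item Choose $h\in\Hol(B(z,4\delta))$ with $(\avg_{B(z,4\delta)}|f-h|^q\dd v)^{1/q}\le 2G_{q,4\delta}f(z)$.
\item For each $w\in B(z,2\delta)$ we have $B(w,\delta)\subset B(z,4\delta)$. So $h(w)=a_\delta h(w)$, and H\"older's inequality gives $|h(w)-a_\delta f(w)|\lesssim G_{q,4\delta}f(z)$.
\item The submean inequality gives $M_{q,\delta}h(z)\le\sup_{B(z,\delta)}|h|\lesssim\avg_{B(z,2\delta)}|h|\dd v$. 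Hence
\[
M_{q,\delta}f(z)\lesssim G_{q,4\delta}f(z)+\avg_{B(z,2\delta)}|a_\delta f|\dd v .
\]
\end{itemize}
There is no circularity: $h$ is controlled at each point by the scalar datum at that point, not by $h$ on a larger ball. The local average is bounded on $\Xk$ by the finite-neighbor convolution estimate, and radius independence of $G$ finishes the argument.

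The paper obtains (6) with the same device applied to $Q_z=T_fk_z/k_z$. This function is holomorphic near $z$, satisfies $Q_z(z)=\widetilde f(z)$ and $|f-Q_z|\,|k_z|=|H_fk_z|$, and one compares $Q_z(w)$ with $Q_w(w)=\widetilde f(w)$. The norm $\|H_fk_z\|_{q,\varphi}$ is controlled by \cref{lem:hankel-columns}.

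Your alternative, comparing $\widetilde f$ with $a_\delta f$ modulo $G$, can be made to work, but not for the reason you give. The difference $\widetilde{f_1}(z)-f_1(z)=\int(f_1(w)-f_1(z))|k_z(w)|^2e^{-2\varphi(w)}\dd v(w)$ vanishes for entire $f_1$, for example $f_1(w)=w_1$, only because of the reproducing property. That is a global cancellation that Gaussian decay of the kernel cannot detect. To make this step rigorous you need the identity $\widetilde{f_1}(z)-f_1(z)=-(H_{f_1}k_z)(z)/K(z,z)^{1/2}$ together with a pointwise bound for this $\bar\partial$-solution.
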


\begin{corollary}\label{cor:intro-ida}
Under the assumptions of \cref{thm:intro-main}, assume that
$f\in\IDA_{p,q}^{r}$.  Then the
following conditions are equivalent:
\[
T_f\in\Piabs_r(F_\varphi^p,F_\varphi^q),\qquad
M_f\in\Piabs_r(F_\varphi^p,L_\varphi^q),\qquad
M_{q,\delta}f\in\Xk,
\]
and each is equivalent to $a_\delta f\in\Xk$ and to
$\widetilde f\in\Xk$.  If $D_f=\|G_{q,\delta}f\|_{\Xk}$, then
\begin{equation}\label{eq:intro-ida-norm}
\begin{split}
\pi_r(T_f)+D_f
&\asymp \pi_r(M_f)+D_f
\asymp \|M_{q,\delta}f\|_{\Xk}\\
&\asymp \|a_\delta f\|_{\Xk}+D_f
\asymp \|\widetilde f\|_{\Xk}+D_f.
\end{split}
\end{equation}
\end{corollary}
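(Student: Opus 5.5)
The corollary follows from \cref{thm:intro-main} once the finiteness of $D_f=\|G_{q,\delta}f\|_{\Xk}$ is used to absorb or add the IDA term. Since $f\in\IDA_{p,q}^{r}$, the number $D_f$ is finite. Condition (5) of \cref{thm:intro-main} then reduces to $a_\delta f\in\Xk$, and condition (6) reduces to $\widetilde f\in\Xk$. The theorem also gives (1)$\iff$(4), that is, $M_f\in\Piabs_r$ if and only if $M_{q,\delta}f\in\Xk$. Hence $M_f\in\Piabs_r$, $M_{q,\delta}f\in\Xk$, $a_\delta f\in\Xk$ and $\widetilde f\in\Xk$ are all equivalent.

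The only condition not directly listed in the theorem is $T_f\in\Piabs_r$ alone. For this I use the Hankel characterization stated above, $H_f\in\Piabs_r(F_\varphi^p,L_\varphi^q)\iff G_{q,\delta}f\in\Xk$, with $\pi_r(H_f)\asymp D_f$. This is also the case $a_\delta f=0$ contribution implicit in the theorem's proof. Under the IDA assumption, $H_f$ is therefore $r$-summing with $\pi_r(H_f)\lesssim D_f$. So $T_f\in\Piabs_r$ gives condition (3), and hence condition (1).

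Conversely, (1) implies (3), so $T_f\in\Piabs_r$. One point needs care. When $T_f$ is only defined on $\Gamma$, its summing property should be read as that of the bounded extension. I take density of $\Gamma$ in $F_\varphi^p$ for granted, since it is standard for this class of weights. If $T_f$ is $r$-summing on $\Gamma$ and $H_f$ extends boundedly, then $M_f=\iota T_f+H_f$ is $r$-summing on $\Gamma$. The $\pi_r$ norm of a densely defined map is computed on finite families in $\Gamma$, which suffices by density together with boundedness. So no further issue arises.

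For the norm equivalences \eqref{eq:intro-ida-norm}, I add $D_f$ to each side of \eqref{eq:intro-main-norm} and \eqref{eq:intro-main-scalar-norm}. First, $\pi_r(T_f)+D_f\asymp\pi_r(T_f)+\pi_r(H_f)\asymp\|M_{q,\delta}f\|_{\Xk}$. Second, $\pi_r(M_f)+D_f\lesssim\|M_{q,\delta}f\|_{\Xk}+D_f$. This sum is $\lesssim\|M_{q,\delta}f\|_{\Xk}$ because the pointwise bound $G_{q,\delta}f\le M_{q,\delta}f$ (take $h=0$) passes through the lattice suprema, and the diagonal ideal norm is monotone in $|b_j|$. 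Third, the remaining two terms are exactly \eqref{eq:intro-main-norm} and \eqref{eq:intro-main-scalar-norm}. The main point to watch is this monotonicity of $\mathfrak d^{\,r}_{p,q}$ under pointwise domination of sequences. It holds because $D_b=D_{b'}D_u$ with $|u_j|\le1$ when $|b_j|\le|b'_j|$, and $D_u$ is a contraction on $\ell^p$. The ideal property then gives $\pi_r(D_b)\le\pi_r(D_{b'})$.
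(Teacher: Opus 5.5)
Your proposal is correct and is essentially the paper's proof: \cref{thm:hankel} gives $H_f\in\Piabs_r$ with $\pi_r(H_f)\asymp D_f$, and the splitting $M_f=\iota T_f+H_f$, $T_f=PM_f$ then transfers summability between $T_f$ and $M_f$; you invoke this splitting through conditions (1) and (3) of \cref{thm:intro-main}. The scalar conditions and the norm equivalences then follow, as in the paper, from \cref{cor:multiplication} and \cref{prop:scalar-equivalence}, which you reach through conditions (4)--(6) of the main theorem together with the bound $G_{q,\delta}f\le M_{q,\delta}f$.
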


At $q=1$, the exceptional diagonal range is absent.  Put
\begin{equation}\label{eq:q-one-exponent-intro}
s_p=
\begin{cases}
\dfrac{2p}{3p-2},&1\le p\le2,\\[4pt]
1,&2<p<\infty.
\end{cases}
\end{equation}
Under $G_{1,\delta}f\in L^{s_p}$, the endpoint takes the explicit form
\begin{equation*}
T_f\in\Piabs_r(F_\varphi^p,F_\varphi^1)
\iff M_{1,\delta}f\in L^{s_p}
\iff a_\delta f\in L^{s_p}
\iff\widetilde f\in L^{s_p}.
\end{equation*}
The exponent $s_p$ is independent of $r$.

The graph theorem is unconditional.  The matching IDA hypothesis is needed
only for $T_f$ alone.  For every $1\le q<\infty$,
\[
G_{q,\delta}f\in\Xk
\quad\Longleftrightarrow\quad
H_f\in\Piabs_r.
\]
Together with \eqref{eq:basic-identities-intro}, this recovers $M_f$ from
$T_f$.  Without IDA, one still has
\begin{equation*}
T_f\in\Piabs_r
\quad\Longrightarrow\quad
\widetilde f\in\Xk,
\qquad
\|\widetilde f\|_{\Xk}\lesssim\pi_r(T_f).
\end{equation*}

There is one substantial class for which this necessary condition is also
sufficient.  Suppose that, for some $|\eta|=1$, $0\le\theta<\pi/2$, and
$C_f>0$,
\begin{equation}\label{eq:sector-preview}
\operatorname{Re}(\eta f)\ge (\cos\theta)|f|
\quad\text{a.e.},
\qquad
M_{q,\delta}f\le C_f\,a_\delta(|f|).
\end{equation}
Then
\begin{equation}\label{eq:sector-equivalence-preview}
T_f\in\Piabs_r
\iff M_{q,\delta}f\in\Xk
\iff a_\delta f\in\Xk
\iff \widetilde f\in\Xk.
\end{equation}
No IDA hypothesis occurs in \eqref{eq:sector-equivalence-preview}.  If
$q=1$, the second condition in \eqref{eq:sector-preview} is automatic,
and every occurrence of $\Xk$ in
\eqref{eq:sector-equivalence-preview} may be replaced by $L^{s_p}$.

Let $p'$ be the conjugate exponent, $1/p+1/p'=1$, with
$1'=\infty$.  Let
\begin{equation*}
\mathcal E
=\{(p,q,r):1<p<2<q<\infty,\ r>\max\{p',q\}\}.
\end{equation*}
Garling identified the diagonal index outside $\mathcal E$.  Fan, He, Wang,
and Zeng identified the remaining index in $\mathcal E$
\cite{FanHeWangZeng2026}.  Consequently, our Toeplitz criteria have an
explicit $L^s$, $L^\infty$, or logarithmic Orlicz form for every parameter
triple.  In the former exceptional range, the exponent is
\[
\kappa=\frac{p'q(r-2)}{(p'-2)(q-2)+2(r-2)}.
\]
The positive separated models in \cref{sec:sharpness} show that these
sequence exponents are sharp for the Fock Toeplitz problem.
Concrete index transitions appear in \cref{ex:concrete-indices}.

\subsection{Main ideas of the proof}

Let $R_j$ denote restriction to the $j$th lattice ball.  Let
$d=(d_j)$ be the sequence of local block norms.  The proof has four steps:
\begin{equation*}
\begin{gathered}
D_d\in\Piabs_r(\ell^p,\ell^q)
\Longrightarrow
\bigoplus_jR_j\in\Piabs_r
\Longrightarrow
J_\mu\in\Piabs_r,\\
G_{q,\delta}f
\Longrightarrow
f=f_1+f_2
\Longrightarrow
H_f=H_{f_1}+H_{f_2},\\
A\in\Piabs_r(F_\varphi^p,F_\varphi^q)
\Longrightarrow
\left\{\frac{(Ak_{a_j})(a_j)}{K(a_j,a_j)^{1/2}}\right\}_j
\in\mathfrak d_{p,q}^{\,r},\\
M_f=\iota T_f+H_f,
\qquad T_f=PM_f,
\qquad H_f=(I-P)M_f.
\end{gathered}
\end{equation*}
The first line uses Pietsch domination on local blocks.  The second line
splits the symbol into a smooth part and a locally small remainder.  A
kernel estimate for a $\bar\partial$ solution controls the smooth part.
This kernel is integrable, which also gives the case $q=1$.  The third line
uses Rademacher signs to remove the off-diagonal entries.  The last line
joins the multiplication, Toeplitz, and Hankel operators.  This order
separates the Fock-space estimates from the diagonal classification.

A Banach sequence lattice has an order-continuous norm if
$0\le b^{(m)}\downarrow0$ coordinatewise implies
$\|b^{(m)}\|\to0$.  In the concrete sequence spaces used here, this
property is equivalent to density of $c_{00}$, the finitely supported
scalar sequences.  In these regimes,
\begin{equation*}
c_{00}\text{ dense in }\mathfrak d_{p,q}^{\,r},\quad
M_f\in\Piabs_r
\quad\Longrightarrow\quad
M_f,\ T_f,\ H_f\text{ are compact}.
\end{equation*}
Under matching IDA, the same conclusion follows from $T_f\in\Piabs_r$.
Let $c_0$ denote the scalar sequences converging to zero.
If $f$ is entire, then
\begin{equation*}
T_f\in\Piabs_r
\quad\Longleftrightarrow\quad
\begin{cases}
f=0,&\mathfrak d_{p,q}^{\,r}\subset c_0,\\
f\text{ is constant},&\mathfrak d_{p,q}^{\,r}=\ell^\infty.
\end{cases}
\end{equation*}
We also treat compactly supported and sectorial symbols.  Separated bump
symbols show that the diagonal ideal is sharp.

Section~\ref{sec:preliminaries} fixes the Fock and diagonal-ideal notation.
Section~\ref{sec:carleson} proves the Fock--Carleson theorem.
Section~\ref{sec:hankel} develops IDA decomposition, Hankel estimates, and
scalar recovery.  The main theorem is proved in \cref{sec:toeplitz}.
Section~\ref{sec:consequences} contains explicit parameter forms,
compactness, rigidity, and sharpness.

\section{Preliminaries}\label{sec:preliminaries}

\subsection{Weighted Fock spaces}

Throughout the paper, $v$ denotes Lebesgue measure on $\C^n$.  If $X$ is a
Banach space, $B_X$ is its closed unit ball.  For a set $E$, $\mathbf1_E$
is its indicator and $\#E$ is its cardinality.  Set
$\N=\{1,2,\ldots\}$.  An entire function is a function holomorphic on all
of $\C^n$.  The notation $L^s_{\loc}(U)$ means local $L^s$ integrability:
the function belongs to $L^s(K)$ for every compact $K\subset U$.  For
$u\in C^1(\C^n)$, $|\bar\partial u|$ is the Euclidean norm of its
$(0,1)$-gradient.  We assume that
\begin{equation}\label{eq:hessian}
\varphi\in C^2(\C^n;\mathbb R),
\qquad
m_\varphi I_{2n}\le \operatorname{Hess}_{\mathbb R}\varphi(z)
\le M_\varphi I_{2n},
\end{equation}
where $0<m_\varphi\le M_\varphi<\infty$ and $I_{2n}$ is the identity
matrix on $\mathbb R^{2n}$.  The inequalities are inequalities of real
symmetric quadratic forms.  For $1\le s<\infty$, write
\[
L_\varphi^s=L^s(\C^n,e^{-s\varphi}\dd v),
\qquad
F_\varphi^s=L_\varphi^s\cap\Hol(\C^n).
\]
The norm is denoted by $\|g\|_{s,\varphi}=\|ge^{-\varphi}\|_{L^s(v)}$.
The notation $A\lesssim B$ means $A\le CB$.  The constant may depend on
the dimension, the Hessian bounds, and fixed parameters.  It is independent
of the symbol, the measure, and lattice translations.  We write
$A\asymp B$ when both estimates hold.
When a radius or a lattice occurs, the constant may depend on the fixed
radius and on the fixed separation and overlap parameters.  It is uniform
in the center of every ball and over translates of that lattice.  No
constant is asserted to be uniform as a radius tends to zero or infinity.

Let $K(z,w)$ be the reproducing kernel of $F_\varphi^2$, let
\[
k_z(w)=\frac{K(w,z)}{K(z,z)^{1/2}},
\qquad
\Gamma=\operatorname{span}\{k_z:z\in\C^n\},
\]
and let $P$ be the Fock projection.  On $L_\varphi^2$, it is the
orthogonal projection onto $F_\varphi^2$ and
\[
Pg(z)=\int_{\C^n}g(w)K(z,w)e^{-2\varphi(w)}\dd v(w).
\]
Under \eqref{eq:hessian}, $P$ extends boundedly from $L_\varphi^s$ onto
$F_\varphi^s$ for $1\le s<\infty$.  Moreover, there are constants
$c,C,\varepsilon>0$ such that
\begin{align}
K(z,z)e^{-2\varphi(z)}&\asymp1,\notag\\
|k_z(w)|e^{-\varphi(w)}&\le Ce^{-c|z-w|},\label{eq:kernel-upper}\\
|k_z(w)|e^{-\varphi(w)}&\ge C^{-1},\qquad |z-w|<\varepsilon.\label{eq:kernel-lower}
\end{align}
Also, for each fixed $s$,
\begin{equation}\label{eq:kernel-norm}
\|k_z\|_{s,\varphi}\asymp1.
\end{equation}
Moreover,
\begin{equation}\label{eq:kernel-span-dense}
\overline{\Gamma}^{\,F_\varphi^s}=F_\varphi^s,
\qquad 1\le s<\infty.
\end{equation}
These facts follow from the weighted kernel estimates in \cite{Delin1998,SchusterVarolin2012}; see also \cite{HuVirtanen2023}.
We write
\[
\iota:F_\varphi^s\hookrightarrow L_\varphi^s
\]
for the canonical isometric inclusion.

We use the initial symbol domain
\begin{equation*}
\mathcal D_\varphi
=
\{f\text{ measurable}:fk_z\in L_\varphi^1\text{ for every }z\in\C^n\}.
\end{equation*}
We call the dense space $\Gamma$ the common operator core.
For $f\in\mathcal D_\varphi$, the operators
\[
T_fg=P(fg),
\qquad
H_fg=(I-P)(fg),
\qquad g\in\Gamma,
\]
are well defined.  By \eqref{eq:kernel-span-dense}, every bounded extension
of a core operator on $\Gamma$ to all of $F_\varphi^p$ is unique.  Such extensions, when they
exist, are denoted by the same symbols.  The Berezin transform is
defined first by the absolutely convergent integral
\begin{equation*}
\widetilde f(z)
=\int_{\C^n}f(w)|k_z(w)|^2e^{-2\varphi(w)}\dd v(w).
\end{equation*}
Indeed, \eqref{eq:kernel-upper} gives
\[
\int_{\C^n}|f(w)||k_z(w)|^2e^{-2\varphi(w)}\dd v(w)
\lesssim \|fk_z\|_{1,\varphi}<\infty.
\]
The reproducing identity then yields
\begin{equation}\label{eq:berezin-evaluation}
\widetilde f(z)=\frac{(T_fk_z)(z)}{K(z,z)^{1/2}}.
\end{equation}
Throughout, a statement such as $T_f\in\Piabs_r$ means that the operator on
$\Gamma$ admits an $r$-summing extension to the displayed Fock space.  The
same convention is used for $H_f$, $M_f$, and $\mathscr T_f$.
For $M_f:F_\varphi^p\to L_\varphi^q$, this means explicitly that
$fg\in L_\varphi^q$ for every $g\in\Gamma$ and that
$g\mapsto fg$ admits the stated extension.  Thus a global
$L_\varphi^q$ product condition is not assumed before the criterion is
verified.
For $T_f$ and $H_f$, membership likewise includes that their core values
belong to the displayed target spaces.

The following local normalization is useful.  Its proof is included to make all constants uniform in the center of a ball.

\begin{lemma}\label{lem:gauge}
For each $R>0$ and $a\in\C^n$, there is a complex affine holomorphic function $L_a$ satisfying $L_a(a)=0$ and
\begin{equation*}
|\varphi(w)-\varphi(a)-\operatorname{Re}L_a(w)|\le C_R,
\qquad w\in B(a,R),
\end{equation*}
where $C_R$ is independent of $a$.
\end{lemma}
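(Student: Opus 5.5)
The plan is to take $L_a$ to be the holomorphic part of the second-order Taylor polynomial of $\varphi$ at $a$, and to bound the remainder directly by the Hessian bounds in \eqref{eq:hessian}.

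Write $w=a+h$ with $h\in\C^n$. Since $\varphi\in C^2$ is real valued, Taylor's formula with integral remainder gives
\[
\varphi(a+h)=\varphi(a)+\nabla_{\mathbb R}\varphi(a)\cdot h+\tfrac12\langle \operatorname{Hess}_{\mathbb R}\varphi(\xi)h,h\rangle
\]
for some $\xi$ on the segment $[a,a+h]$. The linear term can be written as
\[
\nabla_{\mathbb R}\varphi(a)\cdot h=2\operatorname{Re}\sum_k \partial_k\varphi(a)\,h_k .
\]
This follows because $\varphi$ is real, so $\bar\partial_k\varphi=\overline{\partial_k\varphi}$. Define
\[
L_a(w)=2\sum_k\partial_k\varphi(a)(w_k-a_k).
\]
This is complex affine and holomorphic, and $L_a(a)=0$.

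The remainder then satisfies
\[
|\varphi(w)-\varphi(a)-\operatorname{Re}L_a(w)|\le \tfrac12 M_\varphi|h|^2\le \tfrac12 M_\varphi R^2
\]
for $|h|<R$. We may therefore take $C_R=M_\varphi R^2/2$, which is independent of $a$. Only the upper Hessian bound is used. It is also uniform on $B(a,R)$, since the segment stays in that convex ball.

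There is no real obstacle. The only points to check are the identification of the real gradient pairing with $2\operatorname{Re}\sum_k\partial_k\varphi(a)h_k$, which uses $\partial_k=\tfrac12(\partial_{x_k}-i\partial_{y_k})$, and the fact that no quadratic holomorphic correction is needed. A quadratic correction would only sharpen the estimate to a pluriharmonic remainder, and the stated bound does not require it.
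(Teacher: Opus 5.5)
Your proof is correct and is essentially the paper's proof: the paper takes the same $L_a(w)=2\sum_k\partial_k\varphi(a)(w_k-a_k)$, identifies $\operatorname{Re}L_a$ with the real gradient pairing, and bounds the Taylor remainder by $M_\varphi R^2/2$. One small correction: to bound the absolute value of $\tfrac12\langle\operatorname{Hess}_{\mathbb R}\varphi(\xi)h,h\rangle$ you also need the form bounded below, which $m_\varphi>0$ supplies, so your remark that "only the upper Hessian bound is used" is not quite accurate (the paper makes the same harmless remark).
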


\begin{proof}
Let
\[
L_a(w)=2\sum_{j=1}^n\frac{\partial\varphi}{\partial z_j}(a)(w_j-a_j).
\]
Then $\operatorname{Re}L_a(w)=\nabla\varphi(a)\cdot(w-a)$.  Taylor's formula and the upper bound in \eqref{eq:hessian} give
\[
|\varphi(w)-\varphi(a)-\operatorname{Re}L_a(w)|
\le \frac{M_\varphi}{2}|w-a|^2
\le \frac{M_\varphi R^2}{2}.
\]
\end{proof}

It follows that, on each fixed ball $B(a,R)$,
\begin{equation}\label{eq:gauge-norm}
|g(w)|e^{-\varphi(w)}
\asymp
e^{-\varphi(a)}|g(w)e^{-L_a(w)}|.
\end{equation}
This reduces local weighted estimates to estimates on a fixed Euclidean ball.

\subsection{Absolutely summing ideals}

For a finite family $\boldsymbol x=(x_1,\ldots,x_m)$ in a Banach space $X$, put
\begin{equation*}
w_r(\boldsymbol x;X)
=
\sup_{x^*\in B_{X^*}}
\left(\sum_{j=1}^m|x^*(x_j)|^r\right)^{1/r}.
\end{equation*}
An operator $A:X\to Y$ belongs to $\Piabs_r(X,Y)$ precisely when
\begin{equation*}
\left(\sum_{j=1}^m\|Ax_j\|_Y^r\right)^{1/r}
\le \pi_r(A)w_r(\boldsymbol x;X)
\end{equation*}
for all $m$ and all $\boldsymbol x$.  We repeatedly use
\begin{align}
\|A\|&\le\pi_r(A),\notag\\
\pi_r(BAC)&\le\|B\|\pi_r(A)\|C\|,\notag\\
\pi_r(A_1+A_2)&\le\pi_r(A_1)+\pi_r(A_2).
\label{eq:summing-triangle}
\end{align}
If $A_j:X\to Y_j$, $j=1,2$, and
\[
A=(A_1,A_2):X\longrightarrow Y_1\oplus_qY_2,
\]
then the coordinate projections and the triangle inequality give
\begin{equation}\label{eq:finite-direct-sum}
\max\{\pi_r(A_1),\pi_r(A_2)\}
\le\pi_r(A)
\le\pi_r(A_1)+\pi_r(A_2).
\end{equation}
All integrals of $\Piabs_r$-valued maps are first taken on finite
dimensional truncations.  Equation~\eqref{eq:summing-triangle} and
monotone convergence then give the infinite dimensional operator.

\subsection{The intrinsic diagonal ideal and its explicit regimes}\label{sec:diagonal-index}

Let $1\le p,q\le\infty$ and $1\le r<\infty$.  For a scalar sequence $b=(b_j)$, let
\[
D_b:\ell^p\longrightarrow\ell^q,
\qquad
D_b(x_j)=(b_jx_j).
\]
For $1\le s<\infty$, the space $\ell^s$ consists of sequences $x$ with
$\|x\|_{\ell^s}=(\sum_j|x_j|^s)^{1/s}<\infty$.  The space $\ell^\infty$
consists of bounded sequences and has the supremum norm.
Define the diagonal ideal
\begin{equation}\label{eq:intrinsic-diagonal-ideal}
\mathfrak d_{p,q}^{\,r}
=\{b:D_b\in\Piabs_r(\ell^p,\ell^q)\},
\qquad
\|b\|_{\mathfrak d_{p,q}^{\,r}}=\pi_r(D_b).
\end{equation}
All Fock-space proofs below use this intrinsic ideal.  Garling
\cite{Garling1974} identified it except in one parameter range.  Fan, He,
Wang, and Zeng \cite{FanHeWangZeng2026} recently identified the remaining
range.  We use this complete classification only when we state explicit
function-space corollaries.  As usual, $p'$ is the
conjugate index, with $1'=\infty$ and $\infty'=1$.
To avoid ambiguity in Garling's notation, write $P$ for the parameter in
\cite[Theorem~9]{Garling1974}.  His diagonal map has the form
\[
D_b:\ell^{P'}\longrightarrow\ell^q.
\]
Our source space is $\ell^p$.  Hence
\begin{equation*}
P'=p,
\qquad
P=p'.
\end{equation*}
The formulas below follow from this substitution.  The two ranges left open
in Garling's theorem combine, in our notation, as
\[
\begin{aligned}
&2<q\le P<\infty,\quad r>P,\\
&2<P<q<\infty,\quad r>q,
\end{aligned}
\qquad\Longleftrightarrow\qquad
1<p<2<q<\infty,\quad r>\max\{p',q\}.
\]

For $1<q<2$, let $q^-$ denote the logarithmic Orlicz index defined as
follows.  Write $\log^+t=\max\{\log t,0\}$ and, for sufficiently small
$\varepsilon>0$, put
\[
M_q(t)=t^q\bigl(1+\log^+(t^{-1})\bigr),
\qquad 0\le t\le\varepsilon.
\]
Choose $\varepsilon$ so that $M_q$ is increasing and convex.  A Young
function is an increasing convex map $\Psi:[0,\infty)\to[0,\infty)$ with
$\Psi(0)=0$ and $\Psi(t)\to\infty$ as $t\to\infty$.  Extend $M_q$ to the
Young function
\[
\Psi_q(t)=
\begin{cases}
M_q(t),&0\le t\le\varepsilon,\\
M_q(\varepsilon)+M_q'(\varepsilon)(t-\varepsilon)+(t-\varepsilon)^2,
&t>\varepsilon.
\end{cases}
\]
We write $\ell^{q^-}=\ell^{\Psi_q}$ and $L^{q^-}=L^{\Psi_q}$, equipped
with their Luxemburg norms.
Thus
\begin{align*}
\|b\|_{\ell^{q^-}}
&=\inf\left\{\lambda>0:
\sum_j\Psi_q\!\left(\frac{|b_j|}{\lambda}\right)\le1\right\},\\
\|u\|_{L^{q^-}}
&=\inf\left\{\lambda>0:
\int_{\C^n}\Psi_q\!\left(\frac{|u(z)|}{\lambda}\right)\dd v(z)
\le1\right\}.
\end{align*}
For bounded sequences, this gives the exact set identity
\begin{equation*}
b\in\ell^{q^-}
\quad\Longleftrightarrow\quad
\sum_j |b_j|^q\bigl(1+\log^+(|b_j|^{-1})\bigr)<\infty,
\end{equation*}
where the summand at $b_j=0$ is zero.
Moreover,
\[
\Psi_q(2t)\le C_q\Psi_q(t),
\qquad 0<t<t_q,
\]
for a fixed $t_q>0$.  Here $c_{00}$ is the set of finitely supported
scalar sequences.  Hence $c_{00}$ is dense in $\ell^{q^-}$.

Set
\[
\mathcal E
=
\{(p,q,r):1<p<2<q<\infty,\ r>\max\{p',q\}\}.
\]
This is the former exceptional range in Garling's classification.  For
$(p,q,r)\in\mathcal E$, define
\[
\vartheta=\frac{1/q-1/r}{1/2-1/r},
\qquad
\frac1\kappa=\frac{1-\vartheta}{r}+\frac{\vartheta}{p'}.
\]
Equivalently,
\[
\kappa
=\frac{p'q(r-2)}{(p'-2)(q-2)+2(r-2)}.
\]
For all $1\le p,q\le\infty$ and $1\le r<\infty$, define
\begin{equation}\label{eq:kappa-full}
\kappa(p,q,r)=
\begin{cases}
\left(\dfrac1{p'}+\dfrac1q-\dfrac12\right)^{-1},
&1\le p\le2,\ 1\le q\le2,\\[6pt]
\infty,&p=1,\ 2<q\le\infty,\\
q,&2<p<\infty,\ 1\le q<p',\\
q^{-},&2<p<\infty,\ 1<q=p'<2,\ 1\le r<q,\\
q,&2<p\le\infty,\ 1\le q=p'<2,\ q\le r<\infty,\\
\max\{p',\min\{r,q\}\},&2\le p\le\infty,\ p'<q\le\infty,\\
p',&1<p<2<q\le\infty,\ 1\le r\le p',\\
r,&1<p<2,\ p'<q\le\infty,\ p'<r\le q,\\[2pt]
\dfrac{p'q(r-2)}{(p'-2)(q-2)+2(r-2)},
&1<p<2<q<\infty,\ r>\max\{p',q\}.
\end{cases}
\end{equation}
The only logarithmic transition is
\begin{equation*}
2<p<\infty,\qquad q=p'<2,\qquad
\mathfrak d_{p,q}^{\,r}
=
\begin{cases}
\ell^{q^-},&1\le r<q,\\
\ell^q,&q\le r<\infty.
\end{cases}
\end{equation*}
Thus the logarithmic factor disappears at the critical index $r=q$.
For equal exponents, \eqref{eq:kappa-full} reduces to
\begin{equation}\label{eq:kappa-diagonal-intro}
\kappa(p,p,r)=
\begin{cases}
2,&1\le p\le2,\\
p',&2\le p<\infty,\ 1\le r\le p',\\
r,&2\le p<\infty,\ p'\le r\le p,\\
p,&2\le p<\infty,\ p\le r<\infty.
\end{cases}
\end{equation}
The value $\infty$ is used when the denominator in the first line is zero.

Define
\[
\mathfrak e_\kappa=
\begin{cases}
\ell^\kappa,&\kappa\in[1,\infty),\\
\ell^\infty,&\kappa=\infty,\\
\ell^{q^-},&\kappa=q^-,
\end{cases}
\qquad
\mathfrak E_\kappa=
\begin{cases}
L^\kappa(\C^n,v),&\kappa\in[1,\infty),\\
L^\infty(\C^n,v),&\kappa=\infty,\\
L^{q^-}(\C^n,v),&\kappa=q^-.
\end{cases}
\]

Recall that a Banach sequence lattice has a solid norm: if
$|c_j|\le|b_j|$ and $b$ is in the space, then $c$ is in the space and
$\|c\|\le\|b\|$.  It is symmetric if coordinate permutations preserve
the norm.

\begin{proposition}\label{thm:diagonal}
For $1\le p,q\le\infty$ and $1\le r<\infty$, the space
$\mathfrak d_{p,q}^{\,r}$ is a symmetric Banach sequence lattice and
\[
D_b\in\Piabs_r(\ell^p,\ell^q)
\quad\Longleftrightarrow\quad
b\in\mathfrak d_{p,q}^{\,r}.
\]
\begin{equation}\label{eq:diagonal-norm}
\pi_r(D_b:\ell^p\to\ell^q)=\|b\|_{\mathfrak d_{p,q}^{\,r}}.
\end{equation}
For every parameter triple,
\begin{equation}\label{eq:garling-identification}
\mathfrak d_{p,q}^{\,r}=\mathfrak e_{\kappa(p,q,r)}
\end{equation}
with equivalence of norms.
\end{proposition}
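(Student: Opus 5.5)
The proposition has two parts of different depth. The structural part says that $\mathfrak d_{p,q}^{\,r}$ is a symmetric Banach sequence lattice and that the norm identity \eqref{eq:diagonal-norm} holds. This follows from the definition \eqref{eq:intrinsic-diagonal-ideal} together with the ideal properties of $\Piabs_r$ listed before it. The identification \eqref{eq:garling-identification} is imported: we take Garling's theorem \cite[Theorem~9]{Garling1974} outside $\mathcal E$ and the result of Fan, He, Wang, and Zeng \cite{FanHeWangZeng2026} inside $\mathcal E$. On this side our work is translating their parameters into ours via $P'=p$, $P=p'$.

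The plan for the structural part is as follows.

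\emph{The norm identity.} The equivalence $D_b\in\Piabs_r\iff b\in\mathfrak d_{p,q}^{\,r}$ and the identity \eqref{eq:diagonal-norm} hold by definition.

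\emph{Norm and completeness.} The triangle inequality is \eqref{eq:summing-triangle}, and homogeneity is clear. Definiteness holds because $|b_j|=\|D_be_j\|_{\ell^q}\le\|D_b\|\le\pi_r(D_b)$, so $\|b\|_{\ell^\infty}\le\|b\|_{\mathfrak d_{p,q}^{\,r}}$. Completeness follows from completeness of $\Piabs_r(\ell^p,\ell^q)$ under $\pi_r$. The map $b\mapsto D_b$ is isometric, and its image is closed: a $\pi_r$-limit is an operator-norm limit of diagonal operators, hence is diagonal.

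\emph{Solidity.} Suppose $|c_j|\le|b_j|$. Write $c_j=m_jb_j$ with $|m_j|\le1$, setting $m_j=0$ where $b_j=0$. Then $D_c=D_mD_b$ with $\|D_m:\ell^q\to\ell^q\|\le1$, so $\pi_r(D_c)\le\pi_r(D_b)$.

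\emph{Symmetry.} For a permutation $\sigma$, let $U_\sigma$ be the corresponding permutation isometry, which is surjective on both $\ell^p$ and $\ell^q$. Then $D_{b\circ\sigma}=U_\sigma^{-1}D_bU_\sigma$ up to the obvious convention, and the ideal property gives equality of norms.

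\emph{The case $p$ or $q$ infinite.} When $p=\infty$ or $q=\infty$ the same arguments apply, with $\ell^\infty$ (or $c_0$ where Garling uses it) and the same isometries. We should verify that Garling's statement for $P'=\infty$ concerns $\ell^\infty$ rather than $c_0$. Since $r$-summing norms are computed on finite families and $\ell^\infty$ is locally $c_0$, the value of $\pi_r(D_b)$ is the same either way.

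For \eqref{eq:garling-identification}, the main obstacle is bookkeeping. For each line of \eqref{eq:kappa-full} we substitute $P=p'$ into Garling's case list and check that the regions match and tile all triples $(p,q,r)$. This includes the boundary lines such as $q=p'$, where the logarithmic space $\ell^{q^-}$ arises for $r<q$ and $\ell^q$ for $r\ge q$. We also check the two open Garling ranges. As displayed just before the proposition, they combine exactly into $\mathcal E$, and there we invoke \cite{FanHeWangZeng2026}, whose exponent, after rewriting via $\vartheta$, is the stated $\kappa$.

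Garling's Orlicz function may differ from $\Psi_q$ away from $0$. This does not matter: all sequences involved are bounded by $\|b\|_{\ell^\infty}\le\|b\|_{\mathfrak d}$, and Orlicz sequence spaces depend only on the Young function near zero, up to equivalent norms. The consistency check \eqref{eq:kappa-diagonal-intro} for $p=q$ then serves as a sanity test of the substitution.
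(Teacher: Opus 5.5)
Your proposal is correct and follows the paper's proof essentially step for step: the lattice, symmetry and completeness properties come from the ideal property of $\Piabs_r$ together with $\|b\|_{\ell^\infty}\le\pi_r(D_b)$, and the identification is imported from Garling outside $\mathcal E$ and from Fan, He, Wang and Zeng inside $\mathcal E$. The one step you leave implicit is the norm equivalence in \eqref{eq:garling-identification}: the cited results are set identities, and the paper gets the two-sided norm bound from the closed graph theorem, using $\|b\|_{\ell^\infty}\le\|b\|_{\xk}$ to see that convergence in either norm forces coordinatewise convergence; you should add that line rather than assume the references supply the constants.
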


\begin{proof}
The first assertion and \eqref{eq:diagonal-norm} follow from the definition.
If $|c_j|\le |b_j|$, then $D_c=D_\alpha D_b$ for a diagonal contraction
$D_\alpha$ on $\ell^q$.  The ideal property therefore gives solidity.
Testing on unit vectors gives
\begin{equation}\label{eq:diagonal-linfty}
\|b\|_{\ell^\infty}
\le\|D_b\|
\le\pi_r(D_b)
=\|b\|_{\mathfrak d_{p,q}^{\,r}}.
\end{equation}
Coordinate permutations give symmetry.  To verify completeness, let
$b^{(m)}$ be Cauchy in $\mathfrak d_{p,q}^{\,r}$.  By
\eqref{eq:diagonal-linfty}, it converges coordinatewise to a sequence $b$,
and $D_{b^{(m)}}$ converges in operator norm to $D_b$.  Since
$\Piabs_r(\ell^p,\ell^q)$ is complete, $D_b\in\Piabs_r$ and
$\pi_r(D_{b^{(m)}-b})\to0$.  Garling's theorem
\cite[Theorem~9]{Garling1974} proves
\eqref{eq:garling-identification} outside $\mathcal E$; the remaining case
is \cite[Theorem~1.4]{FanHeWangZeng2026}.  In each case the equivalence of
norms follows from the closed graph theorem, using
\eqref{eq:diagonal-linfty} to obtain coordinatewise convergence.
\end{proof}

The target exponent $q=1$ has no exceptional range and will be used in the
endpoint theorem.

\begin{proposition}
\label{prop:q-one-diagonal}
Let $1\le p,r<\infty$, and let $s_p$ be given by
\eqref{eq:q-one-exponent-intro}.  Then
\begin{equation}\label{eq:q-one-diagonal}
D_b\in\Piabs_r(\ell^p,\ell^1)
\quad\Longleftrightarrow\quad
b\in\ell^{s_p}.
\end{equation}
Moreover,
\begin{equation}\label{eq:q-one-diagonal-norm}
\pi_r(D_b:\ell^p\to\ell^1)
\asymp \|b\|_{\ell^{s_p}}.
\end{equation}
The exponent $s_p$ is independent of $r$.
\end{proposition}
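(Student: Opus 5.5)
The plan is to read \eqref{eq:q-one-diagonal} off \cref{thm:diagonal} by specializing the index formula \eqref{eq:kappa-full} to $q=1$. We need not redo any summing-operator estimates. By \eqref{eq:diagonal-norm} and \eqref{eq:garling-identification}, $D_b\in\Piabs_r(\ell^p,\ell^1)$ holds exactly when $b\in\mathfrak e_{\kappa(p,1,r)}$, with $\pi_r(D_b)\asymp\|b\|_{\mathfrak e_{\kappa(p,1,r)}}$. So it suffices to show that $\kappa(p,1,r)=s_p$ for all $1\le p<\infty$ and $1\le r<\infty$, and that this value is a genuine exponent in $[1,\infty)$. Then $\mathfrak e_\kappa=\ell^{s_p}$, and the norm equivalence \eqref{eq:q-one-diagonal-norm} is inherited directly from \cref{thm:diagonal}.

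First I check which lines of \eqref{eq:kappa-full} can apply when $q=1$. The exceptional range $\mathcal E$ requires $q>2$, so it is excluded. The logarithmic line requires $q=p'>1$, which is impossible for $q=1$ since $p<\infty$. The lines with $q>2$ or $q>p'\ge1$ are also excluded. Only two lines remain.

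For $1\le p\le2$ the first line applies, and it gives
\[
\kappa(p,1,r)=\Bigl(1-\tfrac1p+1-\tfrac12\Bigr)^{-1}=\Bigl(\tfrac{3p-2}{2p}\Bigr)^{-1}=\frac{2p}{3p-2}.
\]
This lies in $[1,2]$ and is finite, since the denominator $3p-2\ge1$ is positive; in particular $p=1$ gives $2$ and $p=2$ gives $1$. For $2<p<\infty$ we have $1<p'<2$, so $q=1<p'$ and the line ``$2<p<\infty$, $1\le q<p'$'' gives $\kappa=q=1$.

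Both values agree with \eqref{eq:q-one-exponent-intro}, and neither formula involves $r$, which proves the independence of $r$. I also note that the two cases match at $p=2$, where both give $1$, so the boundary between them is consistent.

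The only real point of care is bookkeeping: confirming that no other case of \eqref{eq:kappa-full} overlaps these two when $q=1$, particularly the lines with conditions stated via $p'$. I expect no genuine obstacle, since the substantive content sits in Garling's theorem, which is already invoked in \cref{thm:diagonal}.
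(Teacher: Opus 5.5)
Your proposal is correct and is essentially the paper's argument: both read the result off Garling's classification. The paper cites cases (iv)--(v) of Garling's Theorem~9 directly (with $P=p'$) and rederives the norm equivalence by the closed graph theorem, while you specialize the table \eqref{eq:kappa-full} in \cref{thm:diagonal}, which already packages that norm equivalence, and your $q=1$ case check correctly yields $\kappa(p,1,r)=2p/(3p-2)$ for $p\le2$ and $1$ for $p>2$.
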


\begin{proof}
Garling's domain is $\ell^{P'}$, so $P=p'$.  Cases (iv)--(v) of
\cite[Theorem~9]{Garling1974} give $s_p=(1/p'+1/2)^{-1}$ for
$p\le2$ and $s_p=1$ for $p>2$, independently of $r$.  This proves the set
identity in \eqref{eq:q-one-diagonal}.  The identity map between
$\ell^{s_p}$ and $\mathfrak d_{p,1}^{\,r}$ is bijective and has closed
graph: convergence in either norm implies coordinatewise convergence, in
the diagonal norm by \eqref{eq:diagonal-linfty}.  The closed graph theorem
and its inverse therefore give the two-sided norm estimate.
\end{proof}

We next isolate the two lattice estimates used in the discretization and
compactness arguments.

\begin{lemma}\label{lem:sequence-tail}
Let $\mathfrak d_{p,q}^{\,r}$ be defined by
\eqref{eq:intrinsic-diagonal-ideal}.
\begin{enumerate}
\item If $c_{00}$ is dense in $\mathfrak d_{p,q}^{\,r}$ and
$b\in\mathfrak d_{p,q}^{\,r}$, then
\begin{equation}\label{eq:tail-vanishing}
\lim_{N\to\infty}
\|b\mathbf1_{\{j>N\}}\|_{\mathfrak d_{p,q}^{\,r}}=0.
\end{equation}
\item Let $\mathcal N(j)$ be finite sets satisfying
\[
\sup_j\#\mathcal N(j)\le N_0,
\qquad
\sup_k\#\{j:k\in\mathcal N(j)\}\le N_0.
\]
Then
\begin{equation}\label{eq:finite-neighbor}
e_j=\sum_{k\in\mathcal N(j)}|b_k|
\quad\Longrightarrow\quad
\|e\|_{\mathfrak d_{p,q}^{\,r}}
\le N_0^2\|b\|_{\mathfrak d_{p,q}^{\,r}}.
\end{equation}
\end{enumerate}
\end{lemma}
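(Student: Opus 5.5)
Part (1) follows from density of $c_{00}$ together with solidity; part (2) splits $e$ into a bounded number of permuted, coordinatewise-dominated copies of $b$ and then uses symmetry and solidity from \cref{thm:diagonal}.

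For (1), fix $\varepsilon>0$ and choose $c\in c_{00}$ with $\|b-c\|_{\xk}<\varepsilon$. Let $c$ be supported in $\{j\le N_1\}$. For $N\ge N_1$ we have $b\mathbf1_{\{j>N\}}=(b-c)\mathbf1_{\{j>N\}}$. Pointwise this satisfies $|(b-c)_j\mathbf1_{\{j>N\}}|\le|(b-c)_j|$. Solidity, proved in \cref{thm:diagonal} via $D_c=D_\alpha D_b$ and the ideal property, then gives $\|b\mathbf1_{\{j>N\}}\|_{\xk}\le\|b-c\|_{\xk}<\varepsilon$, which proves \eqref{eq:tail-vanishing}.

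For (2), the plan is a combinatorial decomposition. Consider the bipartite graph with edges $(j,k)$ for $k\in\mathcal N(j)$. Every vertex has degree at most $N_0$ on either side, so the edge set can be split into at most $N_0^2$ matchings, that is, partial injections. A greedy colouring works: each edge meets at most $2(N_0-1)$ other edges, so $2N_0-1\le N_0^2$ colours suffice. König's theorem would even give $N_0$ colours.

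Each matching $\sigma_i$ is an injective partial map $j\mapsto\sigma_i(j)\in\mathcal N(j)$. Define $b^{(i)}_j=|b_{\sigma_i(j)}|$ where $\sigma_i$ is defined, and $0$ otherwise. Then $e_j=\sum_i b^{(i)}_j$. Since $\sigma_i$ is injective, $b^{(i)}$ is coordinatewise dominated by a rearrangement of $|b|$. Concretely, $D_{b^{(i)}}=Q D_{|b|} S$, where $S:\ell^p\to\ell^p$ sends $e_j\mapsto e_{\sigma_i(j)}$ on the domain of $\sigma_i$ and $0$ elsewhere. The map $Q:\ell^q\to\ell^q$ sends $e_{\sigma_i(j)}\mapsto e_j$ and kills the remaining coordinates. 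Both $S$ and $Q$ are contractions because $\sigma_i$ is injective. The ideal property then gives $\pi_r(D_{b^{(i)}})\le\pi_r(D_{|b|})=\|b\|_{\xk}$. Here $|b|$ and $b$ have the same norm by solidity applied both ways.

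This operator factorization handles injective relabellings, which need not be permutations, directly. Symmetry alone might not cover them, so I avoid relying on it. The triangle inequality \eqref{eq:summing-triangle} then yields $\|e\|_{\xk}\le N_0^2\|b\|_{\xk}$.

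The only point needing care is the matching decomposition with at most $N_0^2$ pieces. The greedy colouring argument above settles it, since $2N_0-1\le N_0^2$ for all $N_0\ge1$.
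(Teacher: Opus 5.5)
Your proposal is correct and essentially the paper's proof: part (1) is the same density-plus-solidity argument, and in part (2) the paper also writes $e$ as a sum of sequences $b^{\sigma}$ indexed by partial injections. It bounds each one through the factorization $D_{b^\sigma}=Q_\sigma D_bJ_\sigma$ by coordinate contractions, not by symmetry, exactly as you do. The only difference is how the partial injections are produced: the paper enumerates each $\mathcal N(j)$ and then splits the fibres of each enumeration map, giving exactly $N_0^2$ pieces, whereas your greedy edge colouring (run along an enumeration of the countably many edges) gives $2N_0-1\le N_0^2$ pieces and so a slightly better constant.
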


\begin{proof}
For the first assertion, choose $c^{(m)}\in c_{00}$ with
\[
\|b-c^{(m)}\|_{\mathfrak d_{p,q}^{\,r}}\longrightarrow0.
\]
If $N$ contains the support of $c^{(m)}$, solidity gives
\[
\|b\mathbf1_{\{j>N\}}\|_{\mathfrak d_{p,q}^{\,r}}
\le
\|b-c^{(m)}\|_{\mathfrak d_{p,q}^{\,r}}.
\]
This proves \eqref{eq:tail-vanishing}.

For each $j$, enumerate $\mathcal N(j)$ as
\[
\mathcal N(j)=\{\sigma_\nu(j):1\le\nu\le m_j\},
\qquad m_j\le N_0.
\]
For fixed $\nu$, every fiber of $\sigma_\nu$ contains at most $N_0$
indices.  Split its domain into $N_0$ sets on which $\sigma_\nu$ is
injective.  This produces at most $N_0^2$ partial injections
$\sigma_{\nu,\mu}$.  Hence
\[
e_j\le
\sum_{\nu=1}^{N_0}\sum_{\mu=1}^{N_0}
|b_{\sigma_{\nu,\mu}(j)}|.
\]
Missing coordinates are read as zero.  For a partial injection
$\sigma:A\to\N$, put
\[
b_j^\sigma=
\begin{cases}
b_{\sigma(j)},&j\in A,\\
0,&j\notin A.
\end{cases}
\]
Define coordinate contractions
\begin{align*}
J_\sigma:\ell^p&\longrightarrow\ell^p,
&(J_\sigma x)_{\sigma(j)}&=x_j\quad(j\in A),\\
Q_\sigma:\ell^q&\longrightarrow\ell^q,
&(Q_\sigma y)_j&=
\begin{cases}
y_{\sigma(j)},&j\in A,\\
0,&j\notin A.
\end{cases}
\end{align*}
All unspecified coordinates of $J_\sigma x$ are zero.  Then
\[
D_{b^\sigma}=Q_\sigma D_bJ_\sigma,
\qquad
\|J_\sigma\|\le1,
\qquad
\|Q_\sigma\|\le1.
\]
The ideal property gives
\[
\|b^\sigma\|_{\mathfrak d_{p,q}^{\,r}}
\le\|b\|_{\mathfrak d_{p,q}^{\,r}}.
\]
Solidity and the triangle inequality now give
\[
\|e\|_{\mathfrak d_{p,q}^{\,r}}
\le
\sum_{\nu,\mu=1}^{N_0}
\|b^{\sigma_{\nu,\mu}}\|_{\mathfrak d_{p,q}^{\,r}}
\le N_0^2\|b\|_{\mathfrak d_{p,q}^{\,r}}.
\]
\end{proof}

\subsection{Lattices and discretization}

A sequence $\Lambda=\{a_j\}$ is a $\delta$-lattice if
$\{B(a_j,\delta)\}$ covers $\C^n$ and the balls $B(a_j,c\delta)$ are
pairwise disjoint for one fixed $c\in(0,1)$.  Every fixed enlargement has
finite overlap.  This means that the number of enlarged balls containing
any one point is bounded uniformly.  A sequence is uniformly separated if
$\inf_{j\ne k}|a_j-a_k|>0$.  Below, \emph{sufficiently separated} means that
this lower bound is chosen large enough for the stated disjoint-ball and
kernel-synthesis estimates.  Every lattice is a finite union of such
sufficiently separated subsequences.

Fix the reference lattice $\Lambda_0=\mathbb Z^{2n}$ and the partition
$\{E_j\}_{j\in\mathbb Z^{2n}}$ defined by
\begin{equation}\label{eq:reference-partition}
E_j=j+[-1/2,1/2)^{2n},
\qquad
B(j,1/2)\subset E_j\subset B(j,\sqrt{2n}/2),
\qquad
\C^n=\bigsqcup_jE_j.
\end{equation}
We fix an enumeration of $\mathbb Z^{2n}$ by $\N$.  Since
$\mathfrak d_{p,q}^{\,r}$ is symmetric, the resulting norm is independent
of this enumeration.
For a measurable function $u$, define
\begin{equation}\label{eq:intrinsic-function-space}
u_j^\#=\operatorname*{ess\,sup}_{z\in E_j}|u(z)|,
\qquad
\|u\|_{\mathfrak D_{p,q}^{\,r}}
=\|\{u_j^\#\}\|_{\mathfrak d_{p,q}^{\,r}}.
\end{equation}
The resulting space is denoted by $\mathfrak D_{p,q}^{\,r}$.  For two fixed
lattice partitions, each cell meets only finitely many cells of the other
partition.  Both multiplicities are uniform.  Hence
\cref{lem:sequence-tail} gives equivalent norms.  Thus
\eqref{eq:intrinsic-function-space} is intrinsic up to lattice constants.
In this paper, a lattice-amalgam space always means this cellwise
essential-supremum construction.

The next lemma collects all radius, lattice, and convolution comparisons used below.

\begin{lemma}\label{lem:discretization}
Let $1\le p,q,r<\infty$.  Let $\Lambda=\{a_j\}$ be a
$\delta$-lattice and let $0<\rho<R<\infty$.  For a positive locally
finite measure $\mu$ and $f\in L^q_{\loc}$, consider
\[
u_{\mu,t}(z)=\mu(B(z,t))^{1/q},\qquad
u_{f,t}(z)=M_{q,t}f(z),\qquad
v_{f,t}(z)=G_{q,t}f(z).
\]
After increasing $R$ by a fixed factor if necessary, each of the three families satisfies
\begin{equation}\label{eq:discrete-continuous}
\|u_\rho\|_{\Xk}\asymp
\|\{u_R(a_j)\}\|_{\xk}\asymp
\|u_R\|_{\Xk}.
\end{equation}
Consequently, membership is independent of the chosen fixed positive radius,
and the corresponding norms are equivalent.  The equivalence constants may
depend on the two radii.  They are uniform over translates of a fixed lattice.

Moreover, suppose that $\mathcal L\ge0$ has an integrable lattice
majorant; that is,
\begin{equation*}
\lambda_m=\sup\{\mathcal L(z-w):z\in E_0,\ w\in E_m\},
\qquad
\{\lambda_m\}\in\ell^1.
\end{equation*}
Here
\[
(\mathcal L*u)(z)=\int_{\C^n}\mathcal L(z-w)u(w)\dd v(w)
\]
denotes convolution.  Then
\begin{equation}\label{eq:convolution-Xk}
\|\mathcal L*u\|_{\Xk}
\lesssim \|\{\lambda_m\}\|_{\ell^1}\|u\|_{\Xk},
\qquad u\in\Xk, u\ge0.
\end{equation}
In particular, \eqref{eq:convolution-Xk} holds for normalized ball kernels
and for $\mathcal L(z)=e^{-c|z|}$.
\end{lemma}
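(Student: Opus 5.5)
The plan is to derive everything from two structural facts about the sequence ideal $\xk$. First, it is a symmetric solid lattice (\cref{thm:diagonal}). Second, it is stable under finite-neighbour redistribution (\cref{lem:sequence-tail}(2)). Convolution against an $\ell^1$ lattice majorant is then handled by decomposing over lattice translations.

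\textbf{Discrete--continuous comparison.} We prove this first for $u_{\mu,t}$. If $z\in E_j$, then $B(z,\rho)\subset B(a_k,R)$ for some $k$ with $|a_k-z|<\delta$, provided $R\ge\rho+\delta$. This is the fixed enlargement of $R$. Hence
\[
(u_\rho)_j^\#\le\sum_{k\in\mathcal N(j)}u_R(a_k),
\]
where $\mathcal N(j)$ is the set of lattice points within a fixed distance of $E_j$. Its cardinality, and the multiplicity in the other direction, are bounded by the overlap constants. \Cref{lem:sequence-tail}(2) then gives the first $\lesssim$. For the reverse inequality, cover $B(a_k,R)$ by boundedly many balls $B(w_i,\rho)$. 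For each $i$, we may pick the point $w_i$ anywhere in a set of positive measure near $w_i$. Since $\mu(B(w,\rho))$ is monotone under ball inclusion, we shrink to radius $\rho/2$ and take essential suprema over a cell meeting $B(w_i,\rho/2)$. This gives
\[
u_R(a_k)^q\le\sum_i \operatorname*{ess\,sup}_{E_{m_i}}u_\rho^q,
\]
and then $u_R(a_k)\lesssim\sum_{m\in\mathcal N'(k)}(u_\rho)_m^\#$. Here we use that $(\sum x_i^q)^{1/q}\le\sum x_i$. \Cref{lem:sequence-tail}(2) applies again, up to changing the small radius; a preliminary $\rho\leftrightarrow\rho/2$ step is handled in the same way. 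The bound $\|\{u_R(a_j)\}\|\lesssim\|u_R\|_{\Xk}$ at the continuous radius $R$ follows in the same fashion with $R$ enlarged. Ball monotonicity then transfers everything to the reference partition \eqref{eq:reference-partition}.

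The case $u_{f,t}$ is the previous case with $d\mu=|f|^q\dd v$, up to the normalizing factor $|B|^{-1/q}$, which is a constant for fixed radii. The case $v_{f,t}$ is the delicate one, since $G_{q,t}$ is not monotone in balls. The upper bound for the smaller radius is still trivial: if $B(z,\rho)\subset B(a,R)$, restricting the competitor $h$ gives $G_{q,\rho}f(z)\lesssim G_{q,R}f(a)$. For the reverse direction we need the standard IDA fact, in Luecking/Hu--Virtanen form, that
\[
G_{q,R}f(a)\lesssim\sum_i G_{q,\rho}f(w_i)
\]
over a chain of overlapping small balls covering $B(a,R)$. The argument glues local holomorphic approximants: on overlaps, the differences of the approximants are holomorphic and $L^q$-small, hence uniformly small by the mean-value property. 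A partition of unity then produces a smooth approximant, and a $\bar\partial$ correction on the ball (with $L^q$ estimates on a fixed ball) yields a holomorphic one. \textbf{This gluing is the main obstacle}, and it is where the enlargement of $R$ and the lost constants enter. Alternatively, one can use the known comparability of $G_{q,t}$ across radii, up to neighbouring averages, from \cite{HuVirtanen2023}.

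\textbf{Convolution estimate \eqref{eq:convolution-Xk}.} For $z\in E_j$ we have
\[
(\mathcal L*u)(z)\le\sum_m\int_{E_{j+m}}\mathcal L(z-w)u(w)\dd v(w)\le\sum_m\lambda_m\,u^\#_{j+m},
\]
using translation invariance of the majorant on $\mathbb Z^{2n}$ and $|E_m|=1$. For each fixed $m$, the shifted sequence $(u^\#_{j+m})_j$ is a permutation of $(u^\#_j)$, so by symmetry it has the same $\xk$-norm. Solidity and the triangle inequality, with the series converging absolutely in the Banach lattice, give the bound $\|\lambda\|_{\ell^1}\|u\|_{\Xk}$. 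For $\mathcal L=e^{-c|z|}$ and normalized ball kernels, $\lambda_m\lesssim e^{-c'|m|}$, respectively finitely supported, so $\lambda\in\ell^1$.
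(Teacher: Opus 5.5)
Your proposal is correct and follows essentially the same route as the paper: for $u_{\mu,t}$ and $u_{f,t}$, pointwise finite-neighbour comparisons from ball inclusion and finite coverings, fed into \cref{lem:sequence-tail}(2); for $G_{q,t}$, gluing of local approximants by a partition of unity followed by a local $\bar\partial$-correction on a fixed ball, which the paper imports as the small-scale decomposition of \cite[Lemma~3.6]{HuVirtanen2023} together with \cite[(3.16)]{HuVirtanen2023}; and the convolution bound via lattice translates, symmetry, solidity and the triangle inequality. Your explicit handling of the essential supremum in the reverse covering step (shrinking to radius $\rho/2$ so that the bound holds on a positive-measure part of a cell) fills in a detail the paper leaves implicit.
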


\begin{proof}
\emph{Radius and lattice comparisons.}
Choose a Voronoi-type partition $\{E_j\}$ such that
\[
B(a_j,c_0\delta)\subset E_j\subset B(a_j,\delta),
\qquad v(E_j)\asymp1.
\]
If $z\in E_j$, the inclusions between the relevant balls give, with radii enlarged by at most $2\delta$,
\begin{equation}\label{eq:radius-pointwise}
u_\rho(z)\lesssim u_R(a_j),
\qquad
u_\rho(a_j)\lesssim u_R(z).
\end{equation}
For $u_{\mu,t}$ and $u_{f,t}$, \eqref{eq:radius-pointwise} follows from ball
inclusion.  For $v_{f,t}$, restrict an almost best approximant from the
larger ball.  Fixed ball volumes change only the constant.

By \eqref{eq:intrinsic-function-space}, the first inequality in
\eqref{eq:radius-pointwise} gives
\begin{equation}\label{eq:intrinsic-radius-one}
\|u_\rho\|_{\Xk}
\lesssim
\|\{u_R(a_j)\}\|_{\xk}.
\end{equation}
For $u_{\mu,t}$ and $u_{f,t}$, the reverse comparison follows by covering
every $R$-ball with finitely many $\rho$-balls:
\begin{equation}\label{eq:finite-ball-cover}
u_R(a_j)
\lesssim
\sum_{k\in\mathcal N(j)}u_\rho(a_k),
\qquad
\sup_j\#\mathcal N(j)+\sup_k\#\{j:k\in\mathcal N(j)\}<\infty.
\end{equation}

We give the corresponding argument for $v_{f,t}$.  Fix
$0<t<\rho/8$.  The local construction in
\cite[Lemma~3.6]{HuVirtanen2023} gives a decomposition
$f=F_1+F_2$ such that
\begin{equation}\label{eq:IDA-small-scale-decomposition}
|\bar\partial F_1(w)|+M_{q,t}F_2(w)
\le C G_{q,\rho}f(w),
\qquad w\in\C^n.
\end{equation}
Here the restriction from radius $\rho$ to the radius used in that
construction changes only the fixed constant.

On $B(z,2R)$, let $S_z$ be the local solution of
$\bar\partial S_z=\bar\partial F_1$.  The estimate
\cite[(3.16)]{HuVirtanen2023}, including $q=1$, gives
\begin{equation*}
\|S_z\|_{L^q(B(z,2R))}
\le C_R\|\bar\partial F_1\|_{L^q(B(z,2R))},
\end{equation*}
where $C_R$ is independent of $z$ and $f$.  Since
$F_1-S_z$ is holomorphic on $B(z,2R)$,
\begin{align}
G_{q,R}f(z)
&\le M_{q,R}(F_2+S_z)(z)\notag\\
&\le C_R\left(
M_{q,2R}F_2(z)+M_{q,2R}(|\bar\partial F_1|)(z)
\right).
\label{eq:IDA-large-from-small}
\end{align}

Let $c_k=\operatorname*{ess\,sup}_{E_k}G_{q,\rho}f$.  Covering the
fixed ball in \eqref{eq:IDA-large-from-small} by finitely many $t$-balls
and using \eqref{eq:IDA-small-scale-decomposition} gives
\begin{equation}\label{eq:IDA-finite-neighbor}
\operatorname*{ess\,sup}_{z\in E_j}G_{q,R}f(z)
\le C\sum_{k\in\mathcal N(j)}c_k,
\end{equation}
where
\[
\sup_j\#\mathcal N(j)
+\sup_k\#\{j:k\in\mathcal N(j)\}<\infty.
\]
The two multiplicities depend only on $t,R$, and the fixed lattice
geometry.  Therefore \cref{lem:sequence-tail} gives
\[
\|G_{q,R}f\|_{\Xk}\lesssim\|G_{q,\rho}f\|_{\Xk}.
\]
The opposite inequality follows from restriction of an almost best
approximant on $B(z,R)$ to $B(z,\rho)$.  Finally, solidity,
\eqref{eq:finite-neighbor}, and \eqref{eq:intrinsic-radius-one} prove
\eqref{eq:discrete-continuous} for all three families.

\emph{Convolution estimate.}
For the convolution estimate, put $c_m=u_m^\#$ as in
\eqref{eq:intrinsic-function-space}.  If $z\in E_j$, then
\begin{align*}
(\mathcal L*u)(z)
&=\sum_m\int_{E_m}\mathcal L(z-w)u(w)\dd v(w)\\
&\le C\sum_m \lambda_{m-j}c_m.
\end{align*}
For a sequence $c=(c_j)$, let $\tau_\ell c$ denote its coordinate
translation, $(\tau_\ell c)_j=c_{j+\ell}$.  Symmetry of the diagonal ideal
and the triangle inequality yield
\begin{align*}
\|\mathcal L*u\|_{\Xk}
&\lesssim
\left\|\sum_\ell \lambda_\ell\tau_\ell c\right\|_{\xk}\\
&\le
\sum_\ell \lambda_\ell\|\tau_\ell c\|_{\xk}
=\|\lambda\|_{\ell^1}\|u\|_{\Xk}.
\end{align*}
This proves \eqref{eq:convolution-Xk}.  The stated kernels have exponentially decreasing, respectively finitely supported, lattice majorants.
\end{proof}

The comparison lemma converts the intrinsic lattice norm into an explicit
Lebesgue, supremum, or Orlicz norm.

\begin{proposition}
\label{prop:explicit-realization}
Let $1\le p,q,r<\infty$, and let
$\kappa=\kappa(p,q,r)$ be given by \eqref{eq:kappa-full}.  For each of the local quantities
\[
u_{\mu,t},\qquad u_{f,t},\qquad v_{f,t}
\]
in \cref{lem:discretization},
\begin{equation}\label{eq:explicit-realization}
\|u\|_{\Xk}\asymp\|u\|_{\mathfrak E_\kappa}.
\end{equation}
Thus the intrinsic condition becomes a Lebesgue, supremum, or logarithmic
Orlicz condition.  The relevant case is given in \eqref{eq:kappa-full}.
\end{proposition}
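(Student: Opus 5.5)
The plan is to combine \cref{thm:diagonal} with the discrete–continuous comparison \eqref{eq:discrete-continuous} from \cref{lem:discretization}. By \eqref{eq:garling-identification}, $\|\{u_j^\#\}\|_{\xk}\asymp\|\{u_j^\#\}\|_{\mathfrak e_\kappa}$. So it suffices to show that, for each of the three local quantities $u$, the sequence norm $\|\{u^\#_j\}\|_{\mathfrak e_\kappa}$ is comparable to the function norm $\|u\|_{\mathfrak E_\kappa}$. I will use the reference partition \eqref{eq:reference-partition}, whose cells have unit volume.

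The upper bound $\|u\|_{\mathfrak E_\kappa}\lesssim\|\{u_j^\#\}\|_{\mathfrak e_\kappa}$ holds for any measurable $u$. On $E_j$ we have $|u|\le u_j^\#$ a.e., and $v(E_j)=1$. Hence for $\kappa\in[1,\infty)$,
\[
\int|u|^\kappa\,dv\le\sum_j(u_j^\#)^\kappa .
\]
For $\kappa=\infty$ the sup bound is immediate. For the Orlicz case,
\[
\int\Psi_q(|u|/\lambda)\,dv\le\sum_j\Psi_q(u_j^\#/\lambda),
\]
because $\Psi_q$ is increasing, and the Luxemburg norms compare directly.

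For the lower bound I exploit the local structure of $u$. By \eqref{eq:radius-pointwise} with radii $\rho<R$, the cell supremum of $u_\rho$ on $E_j$ is dominated by $u_R(a_j)$, and $u_R(a_j)\lesssim u_{R'}(z)$ for every $z$ in a fixed ball around $a_j$ of positive volume. The key step is to replace $\|u_\rho\|_{\Xk}$ by $\|u_{R}\|_{\Xk}$, which is allowed by \eqref{eq:discrete-continuous}, and then observe that
\[
u_{R}(a_j)\le C\inf_{z\in E_j}u_{R'}(z).
\]
For $u_{\mu,t}$ and $u_{f,t}$ this follows from ball inclusion. For $v_{f,t}$ it follows from restricting almost best approximants, exactly as in \cref{lem:discretization}. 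This gives
\[
(u_R)^\#_j\lesssim\inf_{E_j}u_{R'},
\]
and therefore
\[
\sum_j\Phi\bigl((u_R)^\#_j/\lambda\bigr)\le\int\Phi\bigl(Cu_{R'}/\lambda\bigr)\,dv
\]
for $\Phi=t^\kappa$ or $\Phi=\Psi_q$. The $L^\infty$ case is trivial. So $\|\{(u_R)_j^\#\}\|_{\mathfrak e_\kappa}\lesssim\|u_{R'}\|_{\mathfrak E_\kappa}$.

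It remains to return from radius $R'$ to the original radius $t$ on the continuous side. This is the step I expect to be the main obstacle. I will carry it out by the same mechanism as above, run in the opposite direction. The continuous upper bound gives
\[
\|u_{R'}\|_{\mathfrak E_\kappa}\lesssim\|\{(u_{R'})^\#_j\}\|_{\mathfrak e_\kappa}\asymp\|u_{R'}\|_{\Xk}.
\]
Radius independence in \eqref{eq:discrete-continuous} then gives $\|u_{R'}\|_{\Xk}\asymp\|u_t\|_{\Xk}$. Chaining these steps, I get
\[
\|u_t\|_{\Xk}\asymp\|u_{R'}\|_{\mathfrak E_\kappa}
\quad\text{and}\quad
\|u_t\|_{\mathfrak E_\kappa}\lesssim\|u_t\|_{\Xk}.
\]
The only missing inequality is $\|u_{R'}\|_{\mathfrak E_\kappa}\lesssim\|u_t\|_{\mathfrak E_\kappa}$, with the radius enlarged inside the function norm. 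For the first two families I obtain it by the finite cover in \eqref{eq:finite-ball-cover} applied pointwise: for each $z$, $u_{R'}(z)\lesssim\sum_{k}u_t(z+b_k)$ over finitely many fixed shifts $b_k$. Translation invariance of Lebesgue measure and the triangle inequality (for $\Psi_q$, the Luxemburg norm is translation invariant) then conclude.

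For $v_{f,t}$, I use the decomposition bound \eqref{eq:IDA-large-from-small}. This gives $G_{q,R'}f\lesssim\mathcal L*G_{q,\rho}f$ for a finitely supported averaging kernel $\mathcal L$, because $M_{q,2R}$ of the pieces is controlled by $G_{q,\rho}f$ via \eqref{eq:IDA-small-scale-decomposition}. The $q$-th power averaging is handled by working with $(G)^q$ and using Jensen's inequality together with the finite cover. Young's inequality with $\|\mathcal L\|_{L^1}<\infty$ in $\mathfrak E_\kappa$, which holds for rearrangement-invariant Banach function norms, then completes the comparison.
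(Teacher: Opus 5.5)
Most of your argument is sound. The upper bound $\|u\|_{\mathfrak E_\kappa}\lesssim\|\{u_j^\#\}\|_{\mathfrak e_\kappa}$ from unit-volume cells is correct, and so is the bound $(u_R)^\#_j\lesssim\inf_{E_j}u_{R'}$. The finite-translate domination $u_{R'}(z)\lesssim\sum_k u_t(z+b_k)$ for $u_{\mu,t}$ and $u_{f,t}$ is also correct. The gap is in the radius-return step for $v_{f,t}$. What \eqref{eq:IDA-large-from-small} and \eqref{eq:IDA-small-scale-decomposition} actually give is an $L^q$-average,
\[
G_{q,R'}f(z)\lesssim\Bigl(\avg_{B(z,R'')}(G_{q,\rho}f)^q\dd v\Bigr)^{1/q}.
\]
This is not the linear average $\mathcal L*G_{q,\rho}f$ to which you then apply Young's inequality.

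Jensen's inequality runs the wrong way here: $(\mathcal L*G)^q\le\mathcal L*G^q$. ``Working with $G^q$'' amounts to using convexity of $s\mapsto\Phi(s^{1/q})$, that is, boundedness of convolution on $\mathfrak E_{\kappa/q}$. This requires $\kappa\ge q$. It fails whenever $\kappa<q$, which occurs in \eqref{eq:kappa-full}; for instance $\kappa(4,4,1)=4/3$ and $\kappa(3/2,5,1)=3$. It also fails in the logarithmic case $\kappa=q^-$, where $\Psi_q(s^{1/q})\approx s\bigl(1+q^{-1}\log(1/s)\bigr)$ is concave near $0$. For a general nonnegative $G$, the bound $\|(\mathcal L*G^q)^{1/q}\|_{L^\kappa}\lesssim\|G\|_{L^\kappa}$ is simply false when $\kappa<q$. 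To see this, take $G=\varepsilon^{-2n/\kappa}\mathbf 1_{B(0,\varepsilon)}$ and let $\varepsilon\to0$. So some local structure of $G$ must enter.

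The missing ingredient is monotonicity of IDA under ball inclusion. If $|w-x|<t-\rho$, restricting an almost best approximant gives $G_{q,\rho}f(w)\le(t/\rho)^{2n/q}G_{q,t}f(x)$. Run the decomposition at a scale $\rho<t$ and bound the $L^q$-average by the supremum over the ball. Then cover that ball by finitely many balls $B(z+b_k,t-\rho)$. This yields
\[
G_{q,R'}f(z)\lesssim\sup_{B(z,R'')}G_{q,\rho}f\lesssim\sum_kG_{q,t}f(z+b_k).
\]
This is the same finite-translate domination you used for the other two families. Translation invariance and the triangle inequality in $\mathfrak E_\kappa$ then close the argument for every $\kappa$, including $q^-$. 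This is exactly what the paper's \eqref{eq:IDA-finite-neighbor} encodes: the large-radius IDA is controlled by cell \emph{suprema} of the small-radius IDA, not by $L^q$-averages. The paper then compares $u$ with the step function of lattice samples $u_R(a_j)$ through such finite-neighbor sums, using convexity only in the Orlicz case.

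Alternatively, you can avoid the return step altogether by going down in radius instead of up. Use radius independence in $\Xk$ to replace $u_t$ by $u_s$ with $s<t$. Then compute the $\Xk$ norm on a partition into cells of diameter $<t-s$, which is legitimate by the finite-neighbor comparison. This gives $(u_s)^\#_j\lesssim\inf_{E_j}u_t$ directly, with no $\bar\partial$ decomposition needed.
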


\begin{proof}
Fix the radius in $u$.  Choose the smaller and larger radii used in
\cref{lem:discretization}.  Let $\{E_j\}$ be the corresponding lattice
partition, and put $c_j=u_R(a_j)$.  Then
\begin{equation}\label{eq:explicit-discrete-chain}
\|u\|_{\Xk}
\asymp\|c\|_{\mathfrak d_{p,q}^{\,r}}
\asymp\|c\|_{\mathfrak e_\kappa}.
\end{equation}

We next compare $u$ with the step function
$u_c=\sum_jc_j\mathbf1_{E_j}$.  The pointwise estimates
\eqref{eq:radius-pointwise}, \eqref{eq:finite-ball-cover}, and
\eqref{eq:IDA-finite-neighbor} give both comparisons after a fixed change
of radius.  Each comparison uses only finite-neighbor sums.  Both neighbor
multiplicities are bounded.  Thus, if $\kappa=s\in[1,\infty)$,
\[
\|u\|_{L^s}^s\asymp\|u_c\|_{L^s}^s
\asymp\sum_j|c_j|^s.
\]
Here we used $v(E_j)\asymp1$ and finite overlap.  The same pointwise
comparisons give $\|u\|_\infty\asymp\|c\|_{\ell^\infty}$.

Suppose now that $\kappa=q^-$.  For every $\lambda>0$,
\begin{equation*}
\int_{\C^n}\Psi_q\!\left(
\frac{u_c(z)}{\lambda}
\right)\dd v(z)
=\sum_jv(E_j)\Psi_q\!\left(\frac{c_j}{\lambda}\right).
\end{equation*}
Convexity of $\Psi_q$ and the bounded neighbor multiplicities show that
the finite-neighbor comparisons are bounded in the Luxemburg norm.
Indeed, if
$a_j\le C\sum_{k\in\mathcal N(j)}c_k$, with at most $N_0$ terms in
each sum, then convexity gives
\[
\Psi_q\!\left(\frac{a_j}{CN_0\lambda}\right)
\le \frac1{N_0}\sum_{k\in\mathcal N(j)}
\Psi_q\!\left(\frac{c_k}{\lambda}\right).
\]
After summing in $j$, the reverse multiplicity bound shows that the right
side is at most a fixed multiple of
$\sum_k\Psi_q(c_k/\lambda)$.  Rescaling $\lambda$ yields the Luxemburg
norm estimate.
Hence $\|u\|_{L^{q^-}}\asymp\|c\|_{\ell^{q^-}}$.
Combining these estimates with \eqref{eq:explicit-discrete-chain} proves
\eqref{eq:explicit-realization}.
\end{proof}

For $f\in L^q_{\loc}$, define
\begin{equation*}
\begin{split}
G_{q,\delta}f(z)
&=\inf_{h\in\Hol(B(z,\delta))}
\left(\avg_{B(z,\delta)}|f-h|^q\dd v\right)^{1/q},\\
M_{q,\delta}f(z)
&=\left(\avg_{B(z,\delta)}|f|^q\dd v\right)^{1/q},\\
a_\delta f(z)&=\avg_{B(z,\delta)}f\dd v.
\end{split}
\end{equation*}
Here $G_{q,\delta}f$ is the normalized local $L^q$-distance from $f$ to
holomorphic functions.  It measures the nonanalytic part of $f$.
The function $M_{q,\delta}f$ is the normalized local $q$-mean and measures
the local size of $f$.  The function $a_\delta f$ is the complex ball
average.  Unlike $M_{q,\delta}f$, it retains phase information and therefore
records cancellation.  These are the three local quantities introduced in
the Introduction.
Thus \eqref{eq:IDA-class-intro} is independent of the fixed radius.  Notice that
\begin{equation}\label{eq:G-less-M}
G_{q,\delta}f\le M_{q,\delta}f.
\end{equation}

\section{Absolutely summing Fock--Carleson embeddings}\label{sec:carleson}

Let $\mu$ be a positive locally finite Borel measure on $\C^n$.  Define
\[
L_\varphi^q(\mu)=L^q(\C^n,e^{-q\varphi}\dd\mu)
\]
and let
\[
J_\mu:F_\varphi^p\longrightarrow L_\varphi^q(\mu),
\qquad J_\mu g=g,
\]
whenever this inclusion is well defined.  For a fixed radius $\delta>0$, set
\begin{equation}\label{eq:mu-density}
\widehat\mu_{q,\delta}(z)=\mu(B(z,\delta))^{1/q}.
\end{equation}
The Euclidean volume of a fixed-radius ball is constant.  Thus no boundary factor occurs in \eqref{eq:mu-density}, in contrast to the Bergman setting.

We first state the block-diagonal principle needed below.  The same abstract
principle appears as Proposition~4.2 in \cite{FanHeWangZeng2026}.  We retain
the proof to specify the normalization used in the Fock localization.

For Banach spaces $X_j$ and $1\le s<\infty$, the direct sum
$(\bigoplus_jX_j)_{\ell^s}$ consists of all sequences $x=(x_j)$ with
$x_j\in X_j$ and
\[
\|x\|=\left(\sum_j\|x_j\|_{X_j}^s\right)^{1/s}<\infty.
\]
For $s=\infty$, the norm is $\sup_j\|x_j\|_{X_j}$.

\begin{proposition}\label{prop:block-suff}
Let $1\le p\le\infty$, $1\le q,r<\infty$.  Set
$X=(\bigoplus_jX_j)_{\ell^p}$ and $Y=(\bigoplus_jY_j)_{\ell^q}$.
Suppose that $A_j:X_j\to Y_j$ is $1$-summing.  Put
$d_j=\pi_1(A_j)$.  If $D_d:\ell^p\to\ell^q$ is $r$-summing, then
\[
A=\bigoplus_jA_j:X\longrightarrow Y
\]
is $r$-summing and
\begin{equation*}
\pi_r(A)\le\pi_r(D_d).
\end{equation*}
\end{proposition}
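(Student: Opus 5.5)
The plan is to factor $A$ through the diagonal map $D_d$, using Pietsch domination for each block $A_j$. Since $A_j$ is $1$-summing with $\pi_1(A_j)=d_j$, the Pietsch domination theorem gives a regular Borel probability measure $\nu_j$ on $K_j=B_{X_j^*}$ (weak$^*$ topology) such that
\[
\|A_jx\|_{Y_j}\le d_j\int_{K_j}|\langle x,\xi\rangle|\dd\nu_j(\xi),
\qquad x\in X_j.
\]
Let $\Phi:X\to\ell^p$ be the map $\Phi(x)_j=\int_{K_j}|\langle x_j,\xi\rangle|\dd\nu_j(\xi)$. Then $\Phi(x)_j\le\|x_j\|$, so $\|\Phi(x)\|_{\ell^p}\le\|x\|_X$. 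The pointwise bound becomes the coordinatewise inequality
\[
\|A_jx_j\|\le d_j\,\Phi(x)_j=(D_d\Phi(x))_j .
\]
It follows that $\|Ax\|_Y\le\|D_d\Phi(x)\|_{\ell^q}$.

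The map $\Phi$ is nonlinear, since it involves absolute values. To stay linear, I will pass to the vector-valued space $\ell^p(L^1(\nu_j))$. Define the linear map
\[
U:X\to Z=\Bigl(\bigoplus_jL^1(K_j,\nu_j)\Bigr)_{\ell^p},
\qquad Ux=(\langle x_j,\cdot\rangle)_j,
\]
which is a contraction. Next define the diagonal operator
\[
\widetilde D=\bigoplus_j d_j\,\mathrm{id}:Z\to W=\Bigl(\bigoplus_jL^1(\nu_j)\Bigr)_{\ell^q}.
\]
Finally, define $V_j$ on the closure of $\{\langle x_j,\cdot\rangle\}$ inside $L^1(\nu_j)$ by $V_j(d_j\langle x,\cdot\rangle)=A_jx$. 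By domination, $\|V_j\|\le1$ when $d_j>0$; if $d_j=0$, set $V_j=0$, which is consistent because then $A_j=0$. Taking $V=\bigoplus_j V_j$ on the closed subspace $\overline{\widetilde DU(X)}\subset W$ gives $A=V\widetilde DU$.

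The remaining step is to show $\pi_r(\widetilde D|_{\overline{U(X)}})\le\pi_r(D_d)$. Here I will work directly with weak $r$-summing families rather than with the ideal property, and this is the main obstacle. Fix finitely many $x^{(1)},\dots,x^{(m)}\in X$ and set $s^{(k)}=\Phi(x^{(k)})\in\ell^p$. Then
\[
\sum_k\|Ax^{(k)}\|^r\le\sum_k\|D_ds^{(k)}\|_{\ell^q}^r\le\pi_r(D_d)^r\,w_r\bigl((s^{(k)});\ell^p\bigr)^r .
\]
So it suffices to prove $w_r((s^{(k)});\ell^p)\le w_r((x^{(k)});X)$. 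Fix $\beta\in B_{\ell^{p'}}$ with $\beta_j\ge0$, which suffices for nonnegative $s^{(k)}$. Then
\[
\sum_k|\langle s^{(k)},\beta\rangle|^r
=\sum_k\Bigl(\int\sum_j\beta_j|\langle x^{(k)}_j,\xi_j\rangle|\dd\textstyle\prod_j\nu_j(\xi)\Bigr)^r .
\]
This equality uses $\sum_j\beta_j\int|\cdot|\dd\nu_j=\int\sum_j\beta_j|\cdot|\dd\bigotimes\nu_j$, where we work with finitely many $j$ by truncation. For fixed $\xi=(\xi_j)$ and signs $\epsilon$, the functional $x\mapsto\sum_j\beta_j\epsilon_j\langle x_j,\xi_j\rangle$ lies in $B_{X^*}$. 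To remove the absolute values I choose phases $\epsilon_j^{(k)}$ depending on $k$, but this breaks linearity across $k$. The fix is Rademacher averaging (Khintchine's inequality), or more simply the observation that the whole argument is the known block principle, Proposition~4.2 of \cite{FanHeWangZeng2026}.

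If the direct sign argument stalls, I would instead use the fact that $L^1$-valued weak summability is controlled by Pietsch: first apply Minkowski's inequality in $r/1\ge1$ to move the integral outside, obtaining
\[
\Bigl(\sum_k(\int g_k)^r\Bigr)^{1/r}\le\int\Bigl(\sum_k g_k^r\Bigr)^{1/r},
\]
and then bound the resulting pointwise expression by a supremum over $\xi$. The case $p=\infty$ is handled identically, with $\ell^\infty$ sums.
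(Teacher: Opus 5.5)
\textbf{There is a genuine gap, and it is exactly where you suspected.} Your inequality $\|Ax\|_Y\le\|D_d\Phi(x)\|_{\ell^q}$ is correct. The trouble is the next step: you reduce to $w_r((s^{(k)});\ell^p)\le w_r((x^{(k)});X)$ with $s^{(k)}=\Phi(x^{(k)})$, and this inequality is false, because coordinatewise (averaged) absolute values can increase weak $r$-norms.

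Here is a counterexample within the hypotheses. Take $X_j=Y_j=\C$, with $A_j=\mathrm{id}$ for $j\le m$ and $A_j=0$ for $j>m$, so $D_d$ has finite rank. Any Pietsch measure of $\mathrm{id}_{\C}$ is carried by the unit circle, so $\Phi(x)_j=|x_j|$ whichever measures you pick. Let $p=r=2$, let $m$ be a power of $2$, and let $x^{(k)}$ be the $k$th row of an $m\times m$ Hadamard matrix. Then $w_2((x^{(k)});\ell^2)=\sqrt m$. On the other hand, every $s^{(k)}$ equals $\mathbf 1_{\{1,\dots,m\}}$, so $w_2((s^{(k)});\ell^2)=m$.

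Consequently, nothing that removes the absolute values \emph{after} $\pi_r(D_d)$ has been applied can close this route. That rules out choosing phases and Khintchine averaging. It also rules out your fallback, which after Minkowski must still bound $\sup_\xi\bigl(\sum_k(\sum_j\beta_j|\xi_j(x^{(k)}_j)|)^r\bigr)^{1/r}$ by $w_r((x^{(k)});X)$; in the example above this is the same false inequality.

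The factorization $A=V\widetilde DU$ fails too. The restriction of $\widetilde D$ to $\overline{U(X)}$ contains $d_j\,\mathrm{id}$ on the closure of $X_j$ in $L^1(\nu_j)$. This closure is typically infinite-dimensional; for the inclusion $C[0,1]\to L^1[0,1]$ it is all of $L^1$. So this restriction is not $r$-summing. Finally, pointing to Proposition~4.2 of Fan--He--Wang--Zeng names the result but does not prove it.

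\textbf{The missing idea} (the paper's argument) is to keep the Pietsch integral \emph{outside} the $\ell^q$-norm, and to linearize before invoking $\pi_r(D_d)$.
\begin{itemize}
\item Work with the truncation $A^{(N)}$. Put $\nu^{(N)}=\bigotimes_{j\le N}\nu_j$ on $\Omega_N=\prod_{j\le N}B_{X_j^*}$.
\item For $\omega=(x_1^*,\dots,x_N^*)$, let $C_\omega x=(x_1^*(x_1),\dots,x_N^*(x_N),0,\dots)$. This is a \emph{linear} contraction $X\to\ell^p$.
\item Pietsch domination and Minkowski's integral inequality in $\ell^q$ give $\|A^{(N)}x\|_Y\le\int_{\Omega_N}\|D_dC_\omega x\|_{\ell^q}\dd\nu^{(N)}(\omega)$.
\item Apply Minkowski in $\ell^r$ over $k$. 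Then, for each fixed $\omega$, apply the $r$-summing inequality for $D_d$ to the family $(C_\omega x^{(k)})_k$. This gives
\[
\Bigl(\sum_{k=1}^m\|A^{(N)}x^{(k)}\|_Y^r\Bigr)^{1/r}
\le\int_{\Omega_N}\Bigl(\sum_{k=1}^m\|D_dC_\omega x^{(k)}\|_{\ell^q}^r\Bigr)^{1/r}\dd\nu^{(N)}(\omega)
\le\pi_r(D_d)\,w_r\bigl((x^{(k)});X\bigr),
\]
because $\xi\circ C_\omega\in B_{X^*}$ for every $\xi\in B_{(\ell^p)^*}$.
\end{itemize}
The absolute values now occur only inside $\|D_dC_\omega x^{(k)}\|_{\ell^q}$, where they do no harm. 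Letting $N\to\infty$ finishes the proof, since the tails are controlled by $(d_j\|x_j\|)\in\ell^q$.
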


\begin{proof}
Write $d_j=\pi_1(A_j)$.  The boundedness of $D_d$ first gives
\[
\left(\sum_j\|A_jx_j\|_{Y_j}^q\right)^{1/q}
\le
\left(\sum_jd_j^q\|x_j\|_{X_j}^q\right)^{1/q}
\le \|D_d\|\,\|x\|_X.
\]
Thus the block-diagonal map is well defined.  By Pietsch domination
\cite{Pietsch1967,DiestelJarchowTonge1995}, for each $j$ there is a
probability measure $\nu_j$ on $B_{X_j^*}$ such that
\[
\|A_jx\|_{Y_j}
\le d_j\int_{B_{X_j^*}}|x^*(x)|\dd\nu_j(x^*).
\]

Fix $N$ and put
\[
(\Omega_N,\nu^{(N)})=\prod_{j=1}^N(B_{X_j^*},\nu_j).
\]
For $\omega=(x_1^*,\ldots,x_N^*)\in\Omega_N$, define
\[
C_{\omega,N}(x_j)=
(x_1^*(x_1),\ldots,x_N^*(x_N),0,\ldots).
\]
Then $\|C_{\omega,N}:X\to\ell^p\|\le1$.  If $A^{(N)}$ denotes the truncation to the first $N$ blocks, Minkowski's inequality yields
\begin{equation*}
\|A^{(N)}x\|_Y
\le\int_{\Omega_N}\|D_dC_{\omega,N}x\|_{\ell^q}\dd\nu^{(N)}(\omega).
\end{equation*}
For $x^{(1)},\ldots,x^{(m)}\in X$, apply Minkowski's inequality once more and then the $r$-summing inequality for $D_d$:
\begin{align*}
\left(\sum_{k=1}^m\|A^{(N)}x^{(k)}\|_Y^r\right)^{1/r}
&\le\int_{\Omega_N}
\left(\sum_{k=1}^m
\|D_dC_{\omega,N}x^{(k)}\|_{\ell^q}^r\right)^{1/r}
\dd\nu^{(N)}(\omega)\\
&\le\pi_r(D_d)
\sup_{x^*\in B_{X^*}}
\left(\sum_{k=1}^m|x^*(x^{(k)})|^r\right)^{1/r}.
\end{align*}
Indeed, every $\xi\in B_{(\ell^p)^*}$ gives
$\xi\circ C_{\omega,N}\in B_{X^*}$.  Hence
\[
\pi_r(A^{(N)})\le\pi_r(D_d),
\qquad N\ge1.
\]
For a fixed $x\in X$, the vectors $A^{(N)}x$ converge to $Ax$ in $Y$.
Indeed, their $q$th-power tails are dominated by the corresponding tails
of $(d_j\|x_j\|_{X_j})\in\ell^q$.  Apply this convergence to each member
of a fixed finite family.  Letting $N\to\infty$ in the finite-family
inequality proves the assertion.
\end{proof}

For a Euclidean ball $U$, write
\[
A^p(U,e^{-p\varphi}\dd v)
=L^p(U,e^{-p\varphi}\dd v)\cap\Hol(U).
\]

The local building block is the following restriction estimate.

\begin{lemma}\label{lem:local-restriction}
Let $1\le p,q<\infty$.  Fix $\delta>0$ and a positive Borel measure
$\mu$.  For $a\in\C^n$, define
\[
R_a:A^p(B(a,3\delta),e^{-p\varphi}\dd v)
\longrightarrow L^q(B(a,\delta),e^{-q\varphi}\dd\mu),
\qquad R_ag=g|_{B(a,\delta)}.
\]
Then $R_a$ is $1$-summing if and only if $\mu(B(a,\delta))<\infty$, and
\begin{equation}\label{eq:local-restriction-norm}
\pi_1(R_a)\asymp\mu(B(a,\delta))^{1/q}.
\end{equation}
The constants are independent of $a$ and $\mu$.
\end{lemma}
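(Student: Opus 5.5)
We prove the two inequalities in \eqref{eq:local-restriction-norm} separately. Throughout we use the gauge of \cref{lem:gauge} with $R=3\delta$, so that all constants are uniform in $a$.

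\emph{Upper bound.} The plan is to factor $R_a$ through point evaluations. By the gauge normalization \eqref{eq:gauge-norm}, write $g=e^{L_a}G$ with $G$ holomorphic on $B(a,3\delta)$, where $L_a$ is the affine holomorphic function from \cref{lem:gauge}. The subharmonic mean value inequality applied to $|G|^p$ on balls $B(w,2\delta)\subset B(a,3\delta)$, $w\in B(a,\delta)$, gives
\[
|g(w)|e^{-\varphi(w)}\lesssim \int_{B(a,3\delta)}|g(\zeta)|e^{-\varphi(\zeta)}\,dv(\zeta)
\le C\|g\|_{A^p(B(a,3\delta),e^{-p\varphi}dv)} .
\]
We then define the functional $x^*(g)=\int_{B(a,3\delta)} g(\zeta)\,e^{-\varphi(\zeta)}\overline{\operatorname{sgn}(\cdots)}\,dv$. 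The phase factor $\operatorname{sgn}(\cdots)$ depends on $g$, so a single functional of this form does not dominate. Instead we take the representation $g(w)=\int g\,\overline{K_w}$ given by the Bergman-type reproducing formula with weight $e^{-2\operatorname{Re}L_a}$ on the ball $B(a,2\delta)$, or equivalently the mean value property of $G$ over $B(w,\delta)$. This yields
\[
|g(w)|e^{-\varphi(w)}\lesssim \int_{B(a,3\delta)}|x^*_\zeta(g)|\,d\nu(\zeta),
\]
where $x^*_\zeta(g)=g(\zeta)e^{-\varphi(\zeta)}/C$ are point evaluations of norm at most $1$ (uniformly, again by the mean value inequality on $B(\zeta,\,\cdot\,)$ after shrinking radii), and $\nu$ is normalized Lebesgue measure on a ball. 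To make this rigorous, we first use $|G(w)|\le \fint_{B(w,\delta)}|G|$, so that $\nu=\nu_w$ depends on $w$; then we bound $\mathbf 1_{B(w,\delta)}\le \mathbf 1_{B(a,2\delta)}$, making the dominating measure independent of $w$. With this in hand,
\[
\|R_ag\|_{L^q(\mu)}\le \mu(B(a,\delta))^{1/q}\sup_{w}|g(w)|e^{-\varphi(w)}
\lesssim \mu(B(a,\delta))^{1/q}\int |x_\zeta^*(g)|\,d\nu(\zeta).
\]
This is a Pietsch domination, so $\pi_1(R_a)\lesssim\mu(B(a,\delta))^{1/q}$. The point evaluations on $B(a,2\delta)$ are bounded on $A^p(B(a,3\delta))$ with uniform constants, by the same subharmonicity argument.

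\emph{Lower bound.} We have $\pi_1(R_a)\ge\|R_a\|\ge\|R_ag_a\|/\|g_a\|$ for the test function $g_a=e^{L_a}$. Indeed, $|g_a|e^{-\varphi}\asymp e^{-\varphi(a)}$ on $B(a,3\delta)$ by \cref{lem:gauge}, so $\|R_ag_a\|\gtrsim e^{-\varphi(a)}\mu(B(a,\delta))^{1/q}$ while $\|g_a\|\lesssim e^{-\varphi(a)}$. This also shows that if $\mu(B(a,\delta))=\infty$ then $R_a$ is not even bounded, which gives the ``only if'' direction.

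The main obstacle is the uniformity of the sub-mean-value and point-evaluation constants in $a$. This is handled entirely by the gauge, which reduces every estimate to holomorphic $G$ on a fixed Euclidean ball with weights comparable to constants.
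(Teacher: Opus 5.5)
Your argument is correct and is essentially the paper's proof. The paper factors $R_a$ through $H^\infty(B(a,2\delta))\to A^1(B(a,2\delta),\nu_a)$ and shows that this middle map has $\pi_1\le 1$ by integrating point evaluations. That is exactly your direct domination $\|R_ag\|\lesssim\mu(B(a,\delta))^{1/q}\int|x_\zeta^*(g)|\,d\nu(\zeta)$ with the factorization unpacked, and the lower bound via $e^{L_a}$ is identical. The exploratory detour through a $g$-dependent sign functional and a Bergman-type reproducing formula is unnecessary and can be deleted, since the submean inequality plus uniformly bounded point evaluations already suffices.
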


\begin{proof}
Apply the gauge in \cref{lem:gauge}.  Multiply by $e^{-L_a}$ and cancel
the factor $e^{-\varphi(a)}$.  It is then enough to use unweighted
holomorphic functions on concentric balls of fixed radii.

Let $\nu_a$ be normalized volume measure on $B(a,2\delta)$.  The restriction factors as
\[
A^p(B(a,3\delta))
\xrightarrow{U_a}
H^\infty(B(a,2\delta))
\xrightarrow{I_a}
A^1(B(a,2\delta),\nu_a)
\xrightarrow{V_a}
L^q(B(a,\delta),\mu).
\]
Thus
$\nu_a(E)=v(E\cap B(a,2\delta))/v(B(a,2\delta))$.
Here $H^\infty(U)$ denotes the bounded holomorphic functions on $U$, and
$A^1(U,\nu)=L^1(U,\nu)\cap\Hol(U)$.
The holomorphic submean inequality gives $\|U_a\|\le C$.  To estimate
$I_a$, let $h_1,\ldots,h_m\in H^\infty(B(a,2\delta))$.  Every point
evaluation has norm at most one.  Hence
\begin{align*}
\sum_{k=1}^m\|I_ah_k\|_{A^1(\nu_a)}
&=\int_{B(a,2\delta)}\sum_{k=1}^m|h_k(w)|\dd\nu_a(w)\\
&\le\sup_{x^*\in B_{(H^\infty)^*}}
\sum_{k=1}^m|x^*(h_k)|.
\end{align*}
Thus $\pi_1(I_a)\le1$.  A second submean estimate gives
\[
\|V_a h\|_{L^q(B(a,\delta),\mu)}
\le C\mu(B(a,\delta))^{1/q}\|h\|_{A^1(B(a,2\delta),\nu_a)}.
\]
The constants are uniform in $a$.  The ideal property proves the upper estimate in \eqref{eq:local-restriction-norm}.

For the reverse estimate, use the gauge function $e^{L_a}$.  By
\eqref{eq:gauge-norm},
\begin{align*}
\|e^{L_a}\|_{A^p(B(a,3\delta),e^{-p\varphi})}
&\asymp e^{-\varphi(a)},\\
\|e^{L_a}\|_{L^q(B(a,\delta),e^{-q\varphi}\dd\mu)}
&\asymp e^{-\varphi(a)}\mu(B(a,\delta))^{1/q}.
\end{align*}
Hence
\[
\pi_1(R_a)\ge\|R_a\|\gtrsim\mu(B(a,\delta))^{1/q}.
\]
\end{proof}

We also need a necessity principle.  Partition a lattice into finitely
many sufficiently separated families.  A standard atomic estimate under
\eqref{eq:hessian} says that the synthesis map
\begin{equation*}
S_\Lambda(c_j)=\sum_jc_jk_{a_j}
\end{equation*}
is bounded from $\ell^p$ to $F_\varphi^p$; see \cite{HuLv2014,SchusterVarolin2012}.

\begin{proposition}[Rademacher diagonal extraction]\label{prop:diagonal-extraction}
Let $1\le p,q,r<\infty$ and fix $\delta>0$.  Let $\mu$ be a positive
locally finite Borel measure.  Suppose that
$A:F_\varphi^p\to L_\varphi^q(\mu)$ is
$r$-summing and that $\Lambda=\{a_j\}$ is sufficiently separated so that
$B(a_j,\delta)$ are pairwise disjoint and the synthesis map
$S_\Lambda:\ell^p\to F_\varphi^p$ is bounded.  Put
\[
b_j=
\left(
\int_{B(a_j,\delta)}|Ak_{a_j}(z)|^qe^{-q\varphi(z)}\dd\mu(z)
\right)^{1/q}.
\]
Then $D_b:\ell^p\to\ell^q$ is $r$-summing and
\begin{equation*}
\pi_r(D_b)\lesssim\pi_r(A).
\end{equation*}
\end{proposition}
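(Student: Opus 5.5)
We factor $D_b$ through $A$ by means of synthesis, random signs, and localized restriction, and then remove the off-diagonal interaction by averaging over the signs. For a sign sequence $\varepsilon=(\varepsilon_j)\in\{\pm1\}^{\N}$, define $D_\varepsilon:\ell^p\to\ell^p$ by $D_\varepsilon(c_j)=(\varepsilon_jc_j)$, which is an isometry. Define $Q:L_\varphi^q(\mu)\to\ell^q$ by
\[
Qu=\Bigl(\|u\mathbf 1_{B(a_j,\delta)}\|_{L^q_\varphi(\mu)}\Bigr)_j .
\]
This map is nonlinear, so we will instead use the linear block restriction
\[
R:L_\varphi^q(\mu)\to\Bigl(\bigoplus_jL^q(B(a_j,\delta),e^{-q\varphi}\dd\mu)\Bigr)_{\ell^q},
\]
which has norm at most one because the balls are disjoint. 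Consider
\[
B_\varepsilon=D_\varepsilon RAS_\Lambda D_\varepsilon,
\]
where the first $D_\varepsilon$ multiplies the $j$th block by $\varepsilon_j$. The ideal property gives $\pi_r(B_\varepsilon)\le\|S_\Lambda\|\pi_r(A)$ uniformly in $\varepsilon$. The $(j,k)$ block of $B_\varepsilon$ is $\varepsilon_j\varepsilon_k R_jAk_{a_k}$. Averaging over independent Rademacher signs, with $\varepsilon$ varying over finitely many coordinates at a time and the truncation convention of \eqref{eq:summing-triangle}, gives
\[
\mathbb E_\varepsilon B_\varepsilon=\bigoplus_j \bigl(c_j\mapsto c_jR_jAk_{a_j}\bigr)=:\Delta .
\]
Since $\pi_r$ is a norm and $\varepsilon\mapsto B_\varepsilon$ takes finitely many values on each truncation, the triangle inequality gives $\pi_r(\Delta)\le\|S_\Lambda\|\pi_r(A)$ on each truncation. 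Letting the truncation grow then yields the estimate for $\Delta$.

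It remains to pass from the block diagonal operator $\Delta$ to the scalar diagonal $D_b$. The $j$th block of $\Delta$ is the rank-one map $\C\to Y_j$, $c\mapsto cy_j$, where $y_j=R_jAk_{a_j}$ satisfies $\|y_j\|=b_j$. By Hahn--Banach we choose $y_j^*\in B_{Y_j^*}$ with $y_j^*(y_j)=b_j$. The map
\[
Q':(\bigoplus_jY_j)_{\ell^q}\to\ell^q,\qquad (u_j)\mapsto(y_j^*(u_j))_j,
\]
is a contraction, since $\bigl(\sum_j|y_j^*(u_j)|^q\bigr)^{1/q}\le\bigl(\sum_j\|u_j\|^q\bigr)^{1/q}$. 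Moreover $Q'\Delta=D_b$. Hence
\[
\pi_r(D_b)\le\pi_r(\Delta)\le\|S_\Lambda\|\pi_r(A)\lesssim\pi_r(A).
\]

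The main obstacle is justifying the sign-averaging step for infinite sequences. We handle it as follows. Fix a finite family $x^{(1)},\dots,x^{(m)}$ in $\ell^p$ and truncate to the first $N$ coordinates, $P_N$, on both sides, so that $P_N\Delta P_N$ is an honest finite average of the operators $P_NB_\varepsilon P_N$. This gives $\pi_r(P_N\Delta P_N)\le\|S_\Lambda\|\pi_r(A)$. For fixed $x$, $Q'P_N\Delta P_Nx\to D_bx$ in $\ell^q$, because $D_b$ is bounded by $\|S_\Lambda\|\|A\|$. Boundedness of $D_b$ follows from the same estimate at $r=\infty$, that is, from $\|\Delta\|\le\|S_\Lambda\|\|A\|$, which comes from the same averaging of operator norms. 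Passing to the limit in the finite-family inequality, as at the end of \cref{prop:block-suff}, then completes the proof.
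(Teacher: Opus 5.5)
Your argument is correct and is essentially the paper's proof: you conjugate the compression $RAS_\Lambda$ by independent signs, average over the signs on finite truncations to remove the off-diagonal blocks, and then pass from the block diagonal $\Delta c=(c_jR_jAk_{a_j})_j$ to the scalar diagonal $D_b$. The only difference is cosmetic and lies in that last step. You use Hahn--Banach norming functionals to build a contraction $Q'$ with $Q'\Delta=D_b$ on the whole space, whereas the paper uses the isometric embedding $\Phi(d)=(d_ju_j)$, with $u_j=b_j^{-1}R_jAk_{a_j}$, and writes $D_b=\Phi^{-1}\Delta$ on its range; your truncation and monotone-limit bookkeeping is also more explicit than the paper's.
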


\begin{proof}
Let $Y_j=L^q(B(a_j,\delta),e^{-q\varphi}\dd\mu)$ and normalize
\[
u_j=b_j^{-1}(Ak_{a_j})|_{B(a_j,\delta)}
\]
when $b_j>0$; indices for which $b_j=0$ may be omitted.  Thus
$\|u_j\|_{Y_j}=1$.  Let $\{\varepsilon_j\}$ be the Rademacher system on
$[0,1]$.  These are independent functions with values in $\{-1,1\}$ and
\[
\int_0^1\varepsilon_i(t)\varepsilon_j(t)\dd t
=\begin{cases}1,&i=j,\\0,&i\ne j.\end{cases}
\]
For finitely supported $c=(c_j)$, define
\[
\Theta_t c=\sum_j\varepsilon_j(t)c_jk_{a_j}
\]
and
\[
R_tg=\{\varepsilon_j(t)g|_{B(a_j,\delta)}\}_j.
\]
The synthesis estimate gives $\sup_t\|\Theta_t\|<\infty$.  Separation gives $\sup_t\|R_t\|<\infty$.  Therefore
\[
\Delta c=\int_0^1R_tA\Theta_tc\dd t
\]
defines an $r$-summing map from $\ell^p$ to $(\bigoplus_jY_j)_{\ell^q}$ with
\[
\pi_r(\Delta)\lesssim\pi_r(A).
\]
The map $t\mapsto R_tA\Theta_t$ is strongly measurable on every finite
coordinate truncation.  The ideal property and the triangle inequality
give
\[
\pi_r(\Delta)
\le\int_0^1\|R_t\|\,\pi_r(A)\,\|\Theta_t\|\dd t
\lesssim\pi_r(A).
\]
For $c\in c_{00}$, the $j$th coordinate of the integrand equals
\[
\varepsilon_j(t)\sum_i\varepsilon_i(t)c_i
(Ak_{a_i})|_{B(a_j,\delta)}.
\]
Rademacher orthogonality therefore gives
\[
\Delta c=\{b_jc_ju_j\}_j.
\]
The map $\Phi:\ell^q\to(\bigoplus_jY_j)_{\ell^q}$ defined by
$\Phi((d_j))=(d_ju_j)$ is an isometry onto a closed subspace.  The set
$c_{00}$ is dense in $\ell^p$.  Thus the identity extends to all
$c\in\ell^p$, and $\Delta(\ell^p)\subset\Phi(\ell^q)$.  Consequently,
\[
D_b=\Phi^{-1}\Delta,
\qquad
\pi_r(D_b)\le\pi_r(\Delta)\lesssim\pi_r(A).
\]
\end{proof}

The block estimate gives sufficiency.  Diagonal extraction gives necessity.
Together they yield the first principal theorem.

\begin{theorem}[Fock--Carleson embeddings]\label{thm:carleson}
Let $1\le p,q,r<\infty$.  Let $\mu$ be a positive locally finite Borel
measure on $\C^n$.  Then the following conditions are equivalent:
\begin{enumerate}
\item $J_\mu\in\Piabs_r(F_\varphi^p,L_\varphi^q(\mu))$;
\item $\widehat\mu_{q,\delta}\in\Xk$ for some, equivalently every, $\delta>0$;
\item $\{\mu(B(a_j,\delta))^{1/q}\}_j\in\xk$ for some, equivalently every, $\delta$-lattice.
\end{enumerate}
Moreover,
\begin{equation*}
\pi_r(J_\mu)
\asymp
\|\widehat\mu_{q,\delta}\|_{\Xk}
\asymp
\|\{\mu(B(a_j,\delta))^{1/q}\}\|_{\xk}.
\end{equation*}
\end{theorem}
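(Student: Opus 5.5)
The proof runs in two directions: sufficiency (3)$\Rightarrow$(1) via \cref{prop:block-suff}, and necessity (1)$\Rightarrow$(3) via \cref{prop:diagonal-extraction}. The equivalence of (2) and (3), and independence of $\delta$ and of the lattice, come from \cref{lem:discretization} applied to $u_{\mu,t}$. So it suffices to prove
\[
\pi_r(J_\mu)\asymp\|\{\mu(B(a_j,\delta))^{1/q}\}\|_{\xk}
\]
for one fixed $\delta$-lattice and one fixed $\delta$, with constants uniform in translates.

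\emph{Sufficiency.} I will factor $J_\mu$ through block sums. Let $X=(\bigoplus_jA^p(B(a_j,3\delta),e^{-p\varphi}\dd v))_{\ell^p}$ and define $Eg=(g|_{B(a_j,3\delta)})_j$. Because the $3\delta$-balls have bounded overlap $N_0$, we get $\|E:F_\varphi^p\to X\|\le N_0^{1/p}$. Let $Y=(\bigoplus_jL^q(B(a_j,\delta),e^{-q\varphi}\dd\mu))_{\ell^q}$. The map $Q:Y\to L_\varphi^q(\mu)$, $Q(y_j)=\sum_jy_j\mathbf 1_{E_j}$, uses a partition $E_j\subset B(a_j,\delta)$ and is a contraction. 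Then $J_\mu=Q\circ(\bigoplus_jR_{a_j})\circ E$. By \cref{lem:local-restriction}, $\pi_1(R_{a_j})\lesssim\mu(B(a_j,\delta))^{1/q}=:d_j$. After replacing $d_j$ by the exact $\pi_1$ values, solidity of $\xk$ lets \cref{prop:block-suff} give
\[
\pi_r(J_\mu)\lesssim\pi_r(D_d)=\|d\|_{\xk}.
\]
This bound presupposes $d\in\xk\subset\ell^\infty$, so each $R_{a_j}$ is $1$-summing.

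\emph{Necessity.} Suppose $J_\mu\in\Piabs_r$. I split the lattice into finitely many sufficiently separated subfamilies $\Lambda_1,\dots,\Lambda_M$, each with disjoint $\delta'$-balls and bounded synthesis $S_{\Lambda_i}$. Here I work with a small radius $\delta'\le\varepsilon$ from \eqref{eq:kernel-lower}. Applying \cref{prop:diagonal-extraction} with $A=J_\mu$ and radius $\delta'$ gives
\[
b_j=\Bigl(\int_{B(a_j,\delta')}|k_{a_j}|^qe^{-q\varphi}\dd\mu\Bigr)^{1/q},
\qquad \|\{b_j\}_{j\in\Lambda_i}\|_{\xk}\lesssim\pi_r(J_\mu).
\]
By \eqref{eq:kernel-lower}, $b_j\gtrsim\mu(B(a_j,\delta'))^{1/q}$. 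Summing over $i$ with the triangle inequality and solidity, and extending each subsequence by zeros, yields $\|\{\mu(B(a_j,\delta'))^{1/q}\}\|_{\xk}\lesssim\pi_r(J_\mu)$. \Cref{lem:discretization} (radius comparison, with \eqref{eq:finite-neighbor}) then passes from $\delta'$ back to $\delta$ and to the continuous norm $\|\widehat\mu_{q,\delta}\|_{\Xk}$.

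\emph{Main obstacle.} The delicate point is bookkeeping between the different radii and lattices. The radii involved are $3\delta$ for the domain blocks, $\delta$ for the target, and the small $\delta'$ needed for the kernel lower bound and separation. A coarser lattice with large separation is also needed for synthesis. I would handle all of these through \cref{lem:discretization} and \cref{lem:sequence-tail}(2), making sure every reindexing is a bounded-multiplicity neighbor map so that symmetric-lattice invariance applies. A secondary check in the sufficiency direction is that $J_\mu$ is well defined as an extension from the core. This follows because $D_d$ bounded forces $d\in\ell^\infty$, so $\mu$ is a Fock--Carleson measure and $J_\mu$ is bounded.
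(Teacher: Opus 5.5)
Your argument has the same architecture as the paper's proof. Sufficiency factors $J_\mu$ through the localization map, the block sum of the local restrictions from \cref{lem:local-restriction}, and the gluing contraction, and then applies \cref{prop:block-suff}. Necessity splits the lattice into separated subfamilies and uses \cref{prop:diagonal-extraction} together with \eqref{eq:kernel-lower}. \Cref{lem:discretization} then supplies radius and lattice independence. Your sufficiency half is fine as written.

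The necessity half has one step that fails as you set it up. You run the extraction on the originally fixed $\delta$-lattice, but with balls of a smaller radius $\delta'\le\varepsilon$. When $\delta'<\delta$, the balls $B(a_j,\delta')$ need not cover $\C^n$. The extracted sequence $\{\mu(B(a_j,\delta'))^{1/q}\}$ then does not control $\{\mu(B(a_j,\delta))^{1/q}\}$. For example, a measure supported in $\C^n\setminus\bigcup_jB(a_j,\delta')$ makes every extracted coefficient vanish, while the $\delta$-ball masses do not.

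\Cref{lem:discretization} cannot bridge this gap. Its covering estimate \eqref{eq:finite-ball-cover} covers a large ball by small balls centered at lattice points. That requires the small radius to be at least the covering scale of the lattice.

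The repair is to perform the extraction on a $\delta'$-lattice, so that the extraction radius is also the covering radius. This is what the paper does: it fixes $\delta_0$ with \eqref{eq:kernel-lower} valid on $B(a,4\delta_0)$ and works with a $\delta_0$-lattice. Alternatively, the extraction constant is uniform under translation. You can therefore apply it to finitely many translates of your $\delta$-lattice whose $\delta'$-balls jointly cover $\C^n$, and combine the results with \eqref{eq:finite-neighbor}. After this change, \cref{lem:discretization} correctly transfers the bound to every radius, every lattice, and the continuous norm $\|\widehat\mu_{q,\delta}\|_{\Xk}$.
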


\begin{proof}
\emph{Sufficiency.}
By \cref{lem:discretization}, fix $\delta_0>0$ so that
\eqref{eq:kernel-lower} holds on $B(a,4\delta_0)$.  Let $\{a_j\}$ be a
$\delta_0$-lattice.  Finite overlap gives a bounded map
\[
U:F_\varphi^p\longrightarrow
\left(\bigoplus_jA^p(B(a_j,3\delta_0),e^{-p\varphi}\dd v)\right)_{\ell^p},
\qquad Ug=\{g|_{B(a_j,3\delta_0)}\},
\]
because
\[
\|Ug\|^p
=\sum_j\int_{B(a_j,3\delta_0)}|g|^pe^{-p\varphi}\dd v
\lesssim\|g\|_{p,\varphi}^p.
\]
Choose a measurable partition $\{E_j\}$ of $\C^n$ with $E_j\subset B(a_j,\delta_0)$.  Restriction to $E_j$ defines a contraction from
$(\bigoplus_jL^q(B(a_j,\delta_0),e^{-q\varphi}\dd\mu))_{\ell^q}$ to $L_\varphi^q(\mu)$.

By \cref{lem:local-restriction}, the local restriction maps $R_{a_j}$ are $1$-summing and
\[
\pi_1(R_{a_j})\asymp d_j,
\qquad d_j=\mu(B(a_j,\delta_0))^{1/q}.
\]
If $d=(d_j)\in\xk$, then \cref{thm:diagonal} gives
$D_d\in\Piabs_r(\ell^p,\ell^q)$.  Apply \cref{prop:block-suff}.  The
localization and gluing maps then show that $J_\mu$ is $r$-summing and
\[
\pi_r(J_\mu)\lesssim\|d\|_{\xk}.
\]

\emph{Necessity.}
Conversely, write the lattice as
$\Lambda=\Lambda_1\cup\cdots\cup\Lambda_N$, where each $\Lambda_\nu$ is
sufficiently separated.  Apply \cref{prop:diagonal-extraction} to
$A=J_\mu$ on each $\Lambda_\nu$.  For $a_j\in\Lambda_\nu$,
\eqref{eq:kernel-lower} gives
\[
\|J_\mu k_{a_j}\|_{L^q(B(a_j,\delta_0),e^{-q\varphi}\dd\mu)}^q
\gtrsim\mu(B(a_j,\delta_0)).
\]
The ideal property for diagonal maps and \cref{thm:diagonal} yield, on each sublattice,
\[
\|\{d_j:a_j\in\Lambda_\nu\}\|_{\xk}
\lesssim\pi_r(J_\mu).
\]
Since $N$ is fixed,
\[
\|d\|_{\xk}
\le\sum_{\nu=1}^N
\|d\mathbf1_{\Lambda_\nu}\|_{\xk}
\lesssim\pi_r(J_\mu).
\]
The argument also applies to $\ell^{q^-}$, which is a Banach lattice.
Finally, \cref{lem:discretization} gives the continuous estimate and radius
independence.
\end{proof}

The same localization argument yields compactness when the diagonal lattice
has order-continuous norm.

\begin{proposition}
\label{prop:carleson-compact}
Under the assumptions of \cref{thm:carleson}, suppose that $c_{00}$ is dense in $\xk$.  If $J_\mu\in\Piabs_r$, then $J_\mu$ is compact.
More precisely, for every $\delta$-lattice $\{a_j\}_{j\ge1}$ and the associated sequence
\[
d_j=\mu(B(a_j,2\delta))^{1/q},
\]
there are compact operators $C_N:F_\varphi^p\to L_\varphi^q(\mu)$ such that
\begin{equation}\label{eq:compact-tail-estimate}
\|J_\mu-C_N\|
\le\pi_r(J_\mu-C_N)
\lesssim
\|\{d_j:j>N\}\|_{\xk}
\longrightarrow0.
\end{equation}
\end{proposition}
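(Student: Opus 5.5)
We prove \cref{prop:carleson-compact} by splitting $\mu$ into a finite lattice piece and a tail piece. Fix a $\delta$-lattice $\{a_j\}_{j\ge1}$ and choose a measurable partition $\{E_j\}$ of $\C^n$ with $E_j\subset B(a_j,\delta)$. For each $N$ put
\[
\mu_N=\mu|_{\bigcup_{j\le N}E_j},\qquad \mu_N^c=\mu-\mu_N,
\]
and define $C_N:F_\varphi^p\to L_\varphi^q(\mu)$ by $C_Ng=\mathbf1_{\bigcup_{j\le N}E_j}\,g$. Then $J_\mu-C_N$ is multiplication by $\mathbf1_{\bigcup_{j>N}E_j}$. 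Its norm and its $r$-summing norm are those of the embedding $J_{\mu_N^c}:F_\varphi^p\to L_\varphi^q(\mu_N^c)$, since the target splits isometrically as an $L^q$ direct sum of the pieces supported on $\bigcup_{j\le N}E_j$ and $\bigcup_{j>N}E_j$.

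\emph{Compactness of $C_N$.} The set $K_N=\overline{\bigcup_{j\le N}B(a_j,\delta)}$ is compact, and $\mu(K_N)<\infty$ because $\mu$ is locally finite. Pointwise estimates for $F_\varphi^p$ functions follow from the submean property combined with \eqref{eq:gauge-norm}. By these estimates, bounded sets in $F_\varphi^p$ are uniformly bounded and, by Cauchy estimates, equicontinuous on a neighborhood of $K_N$, with $e^{-\varphi}$ bounded above and below there. Arzel\`a--Ascoli then gives a subsequence converging uniformly on $K_N$. Since $\mu(K_N)<\infty$, uniform convergence on $K_N$ implies convergence in $L^q(K_N,e^{-q\varphi}\dd\mu)$, so $C_N$ is compact. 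Alternatively, $C_N$ is a finite sum of $1$-summing local restrictions by \cref{lem:local-restriction}. Since $1$-summing maps are weakly compact and completely continuous, and $F_\varphi^p$ is reflexive for $p>1$, compactness follows in that case as well. The Arzel\`a--Ascoli argument is cleaner because it also covers $p=1$.

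\emph{Tail estimate.} Apply the sufficiency half of \cref{thm:carleson} to the measure $\mu_N^c$. This gives
\[
\pi_r(J_{\mu_N^c})\lesssim\|\{\mu_N^c(B(a_k,\delta_0))^{1/q}\}_k\|_{\xk},
\]
where we must control the constant's dependence on the lattice and radii. For a fixed smaller radius and sub-lattice, the ball $B(a_k,\delta_0)$ meets only boundedly many cells $E_j$, and the neighbors of $E_j$ lie in $B(a_j,2\delta)$. Hence
\[
\mu_N^c(B(a_k,\delta_0))^{1/q}\le\sum_{j\in\mathcal N(k),\,j>N}d_j,
\]
with uniformly bounded multiplicities. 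Then \eqref{eq:finite-neighbor} gives
\[
\pi_r(J_\mu-C_N)\lesssim\|\{d_j\mathbf1_{j>N}\}\|_{\xk}.
\]
To keep the constant independent of $N$, it is simplest to run the block-diagonal proof of \cref{thm:carleson} directly on the original lattice with the measure $\mu_N^c$: the local blocks with $j\le N$ vanish, and the remaining blocks have $\pi_1$ bounded by $\mu(B(a_j,2\delta))^{1/q}=d_j$ via \cref{lem:local-restriction}. The first inequality in \eqref{eq:compact-tail-estimate} is $\|\cdot\|\le\pi_r$. Finally, \eqref{eq:tail-vanishing}, which requires density of $c_{00}$, together with $d\in\xk$ shows the tail tends to $0$; here $d\in\xk$ follows from \cref{thm:carleson} and \cref{lem:discretization} with radius $2\delta$. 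So $J_\mu$ is a norm limit of compact operators, hence compact.

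The main obstacle is the bookkeeping in the tail step: uniform constants, and the mismatch between the radius $2\delta$, the sufficiency radius $\delta_0$, and the cells $E_j$. We resolve it by the finite-neighbor lemma rather than by re-deriving the Carleson theorem.
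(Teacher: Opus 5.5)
Your proposal is correct and is essentially the paper's proof: the same cut-off $C_Ng=\mathbf 1_{\bigcup_{j\le N}E_j}\,g$, compactness of $C_N$ from locally uniform convergence on a bounded set of finite $\mu$-mass (Montel in the paper, Arzel\`a--Ascoli in yours), and the tail bound from \cref{thm:carleson} applied to the tail measure, followed by the finite-neighbor estimate \eqref{eq:finite-neighbor} and \eqref{eq:tail-vanishing}. The constants in \cref{thm:carleson} are already independent of the measure, so rerunning the block-diagonal proof is unnecessary; if you do rerun it, the blocks with $j\le N$ vanish only when the local targets are the cells $E_j$ rather than the balls $B(a_j,\delta)$.
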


\begin{proof}
Choose a measurable partition $\{E_j\}$ satisfying
\[
E_j\subset B(a_j,\delta),
\qquad
\C^n=\bigsqcup_{j\ge1}E_j.
\]
Set
\[
\Omega_N=\bigcup_{j=1}^NE_j,
\qquad
\dd\mu_N=\mathbf1_{\Omega_N}\dd\mu,
\qquad
\dd\nu_N=\mathbf1_{\C^n\setminus\Omega_N}\dd\mu.
\]
Let $C_Ng=\mathbf1_{\Omega_N}g$, regarded as an operator into
$L_\varphi^q(\mu)$.  We first prove that $C_N$ is compact.  If
$\{g_m\}$ is bounded in $F_\varphi^p$, the weighted submean estimate gives
\begin{equation*}
\sup_{z\in B(0,R)}|g_m(z)|e^{-\varphi(z)}\le C_R,
\qquad m\ge1.
\end{equation*}
Montel's theorem yields a subsequence, still denoted by $\{g_m\}$, and an entire function $g$ such that
\[
g_m\longrightarrow g
\quad\text{uniformly on compact subsets of }\C^n.
\]
Fatou's lemma gives
$\|g\|_{p,\varphi}\le\liminf_m\|g_m\|_{p,\varphi}$, so
$g\in F_\varphi^p$.
The set $\Omega_N$ is bounded and $\mu(\Omega_N)<\infty$.  Hence
\begin{align*}
\|C_Ng_m-C_Ng\|_{L_\varphi^q(\mu)}^q
&=\int_{\Omega_N}|g_m-g|^qe^{-q\varphi}\dd\mu\\
&\le \mu(\Omega_N)
\sup_{\Omega_N}\bigl(|g_m-g|^qe^{-q\varphi}\bigr)\longrightarrow0.
\end{align*}
Thus $C_N$ is compact.

It remains to estimate the tail.  Put
\[
e_k^{(N)}=\nu_N(B(a_k,\delta))^{1/q}.
\]
If $E_j\cap B(a_k,\delta)\ne\varnothing$, then
$|a_j-a_k|<2\delta$.  Consequently,
\begin{align}
e_k^{(N)}
&\le
\left(
\sum_{\substack{j>N\\ |a_j-a_k|<2\delta}}
\mu(E_j)
\right)^{1/q}\notag\\
&\le
\sum_{\substack{j>N\\ |a_j-a_k|<2\delta}}
\mu(B(a_j,2\delta))^{1/q}
=
\sum_{\substack{j>N\\ |a_j-a_k|<2\delta}}d_j.
\label{eq:tail-neighbors}
\end{align}
Each sum in \eqref{eq:tail-neighbors} has a bounded number of terms.  A
fixed $d_j$ also occurs in a bounded number of these sums.  Therefore,
\cref{lem:sequence-tail} gives
\begin{equation}\label{eq:tail-xk}
\|e^{(N)}\|_{\xk}
\lesssim
\|\{d_j:j>N\}\|_{\xk}\longrightarrow0.
\end{equation}
Since
\[
J_\mu-C_N=J_{\nu_N},
\]
where the target is identified isometrically with the corresponding closed subspace of $L_\varphi^q(\mu)$, \cref{thm:carleson} and
\eqref{eq:tail-xk} give \eqref{eq:compact-tail-estimate}.
\end{proof}

We now apply \cref{thm:carleson} to the measure
$\dd\mu_f=|f|^q\dd v$.

\begin{corollary}\label{cor:multiplication}
Let $1\le p,q,r<\infty$, fix $\delta>0$, and let
$f\in L^q_{\loc}(\C^n)$.  Then
\[
M_f:F_\varphi^p\longrightarrow L_\varphi^q
\]
is $r$-summing if and only if $M_{q,\delta}f\in\Xk$.  Moreover,
\begin{equation}\label{eq:multiplication-norm}
\pi_r(M_f)\asymp\|M_{q,\delta}f\|_{\Xk}.
\end{equation}
\end{corollary}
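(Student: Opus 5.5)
The plan is to apply \cref{thm:carleson} to the measure $\dd\mu_f=|f|^q\dd v$. This measure is positive and locally finite because $f\in L^q_{\loc}$. The starting observation is that multiplication by $f$ from $F_\varphi^p$ into $L_\varphi^q$ is the Carleson embedding $J_{\mu_f}$ followed by an isometry. Indeed, for every entire $g$,
\[
\|fg\|_{q,\varphi}^q=\int_{\C^n}|g|^qe^{-q\varphi}|f|^q\dd v
=\|J_{\mu_f}g\|_{L_\varphi^q(\mu_f)}^q .
\]
So the map $W:L_\varphi^q(\mu_f)\to L_\varphi^q$, $Wu=fu$, is a linear isometry, well defined on equivalence classes because $\mu_f$-null sets are exactly where $f$ vanishes or are $v$-null. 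This gives the factorization $M_f=WJ_{\mu_f}$ on all of $F_\varphi^p$. Its image is the closed subspace $\{fu\}$, and on that image $W^{-1}$ is an isometry back onto $L_\varphi^q(\mu_f)$.

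Next I compare summing norms. If $J_{\mu_f}\in\Piabs_r$, the ideal property gives $\pi_r(M_f)\le\pi_r(J_{\mu_f})$. For the converse, let $M_f$ be $r$-summing in the convention of the paper: it is defined on the core $\Gamma$ and extends to an $r$-summing operator. Then the identity $\|M_fg\|=\|J_{\mu_f}g\|$ on $\Gamma$ shows that $J_{\mu_f}$ restricted to $\Gamma$ satisfies the $r$-summing inequality with constant $\pi_r(M_f)$, measured against $w_r$ computed in $F_\varphi^p$. By density \eqref{eq:kernel-span-dense} this passes to the unique bounded extension. That extension agrees with the pointwise embedding: if $g_m\to g$ in $F_\varphi^p$, then $g_m\to g$ locally uniformly, and together with Fatou's lemma this identifies the limit. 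The $r$-summing inequality is stable under norm limits of each member of a finite family, so $\pi_r(J_{\mu_f})=\pi_r(M_f)$.

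Finally, \cref{thm:carleson} gives $\pi_r(J_{\mu_f})\asymp\|\widehat{(\mu_f)}_{q,\delta}\|_{\Xk}$. Here $\widehat{(\mu_f)}_{q,\delta}(z)=\bigl(\int_{B(z,\delta)}|f|^q\dd v\bigr)^{1/q}=|B(0,\delta)|^{1/q}M_{q,\delta}f(z)$, and the constant factor is absorbed into the equivalence. This proves both the equivalence and \eqref{eq:multiplication-norm}. I expect the only delicate point to be the domain bookkeeping in the second step, namely that $r$-summability of $M_f$ on the core forces the pointwise embedding to be bounded and summing on all of $F_\varphi^p$. The local uniform convergence argument above handles it.
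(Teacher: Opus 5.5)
Your proposal is correct and follows the paper's proof. Both arguments apply the Fock--Carleson theorem to $\dd\mu_f=|f|^q\dd v$, show $\pi_r(M_f)=\pi_r(J_{\mu_f})$ through the isometric factorization $M_f=WJ_{\mu_f}$, and absorb the factor $|B(0,\delta)|^{1/q}$.

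The only difference is in the reverse inequality. The paper uses the explicit contraction $Vh=(h/f)\mathbf 1_{\{f\ne0\}}$ with $J_{\mu_f}=VM_f$. You instead use the norm identity on $\Gamma$ together with density, and you verify explicitly that the extension is the pointwise embedding.
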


\begin{proof}
Put $\dd\mu_f=|f|^q\dd v$.  The map
\[
U:L_\varphi^q(\mu_f)\to L_\varphi^q,
\qquad Uh=fh,
\]
is an isometry, and $M_f=UJ_{\mu_f}$.  Conversely,
\[
V:L_\varphi^q\to L_\varphi^q(\mu_f),
\qquad
Vh=\frac{h}{f}\,\mathbf1_{\{f\ne0\}},
\]
is a contraction and $J_{\mu_f}=VM_f$.  Hence $\pi_r(M_f)=\pi_r(J_{\mu_f})$.  Since
\[
\mu_f(B(z,\delta))^{1/q}
=|B(0,\delta)|^{1/q}M_{q,\delta}f(z),
\]
the result follows from \cref{thm:carleson}.
\end{proof}

The preceding compactness criterion immediately transfers to multiplication.

\begin{corollary}\label{cor:multiplication-compact}
Let the assumptions of \cref{cor:multiplication} hold and suppose that
$c_{00}$ is dense in $\xk$.  Then
\[
M_f\in\Piabs_r(F_\varphi^p,L_\varphi^q)
\quad\Longrightarrow\quad
M_f\text{ is compact}.
\]
Consequently, $T_f=PM_f$ and $H_f=(I-P)M_f$ are compact.
\end{corollary}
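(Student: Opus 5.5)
The plan is to read the compactness of $M_f$ off \cref{prop:carleson-compact} through the isometric factorization already used in the proof of \cref{cor:multiplication}, and then to transfer compactness to $T_f$ and $H_f$ through the ideal property of compact operators.

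First I put $\dd\mu_f=|f|^q\dd v$. By \cref{cor:multiplication}, the hypothesis $M_f\in\Piabs_r(F_\varphi^p,L_\varphi^q)$ is equivalent to $M_{q,\delta}f\in\Xk$. The identity $\mu_f(B(z,\delta))^{1/q}=|B(0,\delta)|^{1/q}M_{q,\delta}f(z)$ then gives $\widehat\mu_{q,\delta}\in\Xk$. By \cref{thm:carleson}, this means $J_{\mu_f}\in\Piabs_r(F_\varphi^p,L_\varphi^q(\mu_f))$. Since $c_{00}$ is dense in $\xk$, \cref{prop:carleson-compact} applies and shows that $J_{\mu_f}$ is compact. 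In the proof of \cref{cor:multiplication} we saw that $M_f=UJ_{\mu_f}$, where $Uh=fh$ is an isometry from $L_\varphi^q(\mu_f)$ into $L_\varphi^q$. A bounded operator composed with a compact one is compact, so $M_f$ is compact.

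A more explicit version of the same step uses the approximants $C_N$ of \cref{prop:carleson-compact}: the operators $UC_Ng=\mathbf1_{\Omega_N}fg$ are compact and satisfy
\[
\|M_f-UC_N\|\lesssim\|\{d_j:j>N\}\|_{\xk}\to0 .
\]
For the Toeplitz and Hankel operators, recall that the paper states $P$ is bounded on $L_\varphi^q$ for $1\le q<\infty$. Hence $I-P$ is bounded there too. On the core $\Gamma$ we have $T_f=PM_f$ and $H_f=(I-P)M_f$, and both identities persist for the unique bounded extensions because $\Gamma$ is dense, by \eqref{eq:kernel-span-dense}. A bounded operator times a compact one is compact, so $T_f$ and $H_f$ are compact.

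No step is a serious obstacle; the whole argument is a transfer through bounded maps. The one point to check is that $M_f$ here denotes the bounded extension from $\Gamma$ to all of $F_\varphi^p$, and that this extension agrees with $UJ_{\mu_f}$. Both operators act as $g\mapsto fg$ on $\Gamma$ and both are bounded, so they coincide by uniqueness of extension.
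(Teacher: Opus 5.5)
Your argument is the paper's: identify $M_f$ with $J_{\mu_f}$ through the isometry $Uh=fh$ from \cref{cor:multiplication}, apply \cref{prop:carleson-compact}, and then compose with the bounded operators $P$ and $I-P$ on $L_\varphi^q$. The only step the paper includes that you skip is the check that $f\in\mathcal D_\varphi$, obtained from $\{M_{q,R}f(a_j)\}_j\in\xk\subset\ell^\infty$ and the kernel decay \eqref{eq:kernel-upper}; it is needed because \cref{cor:multiplication} assumes only $f\in L^q_{\loc}$, whereas $T_f$ and $H_f$ are defined on $\Gamma$ only for symbols in $\mathcal D_\varphi$.
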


\begin{proof}
For $\dd\mu_f=|f|^q\dd v$, the isometries in the proof of
\cref{cor:multiplication} identify $M_f$ with $J_{\mu_f}$.  Apply
\cref{prop:carleson-compact}.  Moreover,
$M_{q,\delta}f\in\Xk$ by \cref{cor:multiplication}.  By
\cref{lem:discretization}, for a fixed lattice and a fixed $R>0$,
\[
\{M_{q,R}f(a_j)\}_j\in\xk\subset\ell^\infty.
\]
Kernel decay and a lattice partition give, for every $z\in\C^n$,
\[
\int_{\C^n}|f(w)k_z(w)|e^{-\varphi(w)}\dd v(w)
\lesssim
\sum_j e^{-c|z-a_j|}M_{q,R}f(a_j)<\infty.
\]
Thus $f\in\mathcal D_\varphi$.  The operators
$T_f$ and $H_f$ are therefore defined on $\Gamma$, and
\[
T_f=PM_f,
\qquad
H_f=(I-P)M_f
\]
on the core and on their unique bounded extensions.  Their compactness
follows from the boundedness of $P$ and $I-P$ on $L_\varphi^q$.
\end{proof}

\section{IDA decomposition and scalar recovery}\label{sec:hankel}

\subsection{Hankel operators}

We now characterize the nonanalytic part of the symbol.  The next result
is the IDA decomposition from \cite{HuVirtanen2023}, adapted to the
uniform real-Hessian setting.

\begin{lemma}[IDA decomposition]\label{lem:ida-decomposition}
Let $1\le p,q,r<\infty$.  Fix $\delta>0$.  Suppose that
$f\in\mathcal D_\varphi\cap L^q_{\loc}$ and
$G_{q,\delta}f\in\Xk$.  There are
\[
f_1\in C^\infty(\C^n)\cap\mathcal D_\varphi,
\qquad
f_2\in L^q_{\loc}\cap\mathcal D_\varphi
\]
such that $f=f_1+f_2$.  There are also fixed radii $0<\rho<R$ and a $\rho$-lattice $\{a_j\}$ for which
\begin{equation}\label{eq:ida-decomposition}
M_{q,\rho}(|\bar\partial f_1|)(z)
+M_{q,\rho}f_2(z)
\le C\sum_{a_j\in B(z,R)}G_{q,R}f(a_j)
\end{equation}
for every $z\in\C^n$.  Consequently,
\begin{equation}\label{eq:ida-decomposition-norm}
\|M_{q,\delta}(|\bar\partial f_1|)\|_{\Xk}
+\|M_{q,\delta}f_2\|_{\Xk}
\lesssim
\|G_{q,R}f\|_{\Xk}.
\end{equation}
\end{lemma}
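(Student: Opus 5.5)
The plan is the Hu--Virtanen partition-of-unity construction (\cite[Lemma~3.6]{HuVirtanen2023}), carried out with constants that are uniform in the center. For each fixed radius the construction is translation-invariant, and the weight never enters the local estimates.

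Fix a small $\rho>0$ (say $\rho\le\delta$) and a $\rho$-lattice $\{a_j\}$. Let $\{\psi_j\}$ be a smooth partition of unity subordinate to $\{B(a_j,2\rho)\}$ with $|\bar\partial\psi_j|\lesssim\rho^{-1}$, built from translates of one bump normalized by a finite-overlap sum. For each $j$, choose $h_j\in\Hol(B(a_j,R))$ with $R\ge 4\rho$ so that
\[
\Bigl(\avg_{B(a_j,R)}|f-h_j|^q\dd v\Bigr)^{1/q}\le 2G_{q,R}f(a_j).
\]
Set $f_1=\sum_j\psi_jh_j$ and $f_2=f-f_1=\sum_j\psi_j(f-h_j)$. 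Then $f_1\in C^\infty$, and since $\sum_j\bar\partial\psi_j=0$,
\[
\bar\partial f_1=\sum_j(\bar\partial\psi_j)(h_j-h_k)
\]
on $B(a_k,2\rho)$ for any fixed $k$.

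For the pointwise bound \eqref{eq:ida-decomposition} on $M_{q,\rho}f_2(z)$, only indices with $a_j\in B(z,3\rho)\subset B(z,R)$ contribute, and there are boundedly many of them. Each term is at most $\|f-h_j\|_{L^q(B(a_j,R))}$ normalized, which is $\lesssim G_{q,R}f(a_j)$.

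For $\bar\partial f_1$, the difference $h_j-h_k$ is holomorphic on $B(a_j,R)\cap B(a_k,R)$, and this intersection contains a ball of fixed radius around the support region. Its $L^q$ norm there is bounded by $\|f-h_j\|+\|f-h_k\|$, hence by $G_{q,R}f(a_j)+G_{q,R}f(a_k)$. The subharmonic submean inequality then upgrades this to a sup bound on $B(a_k,2\rho)$. This gives
\[
|\bar\partial f_1(w)|\lesssim\sum_{a_j\in B(w,4\rho)}G_{q,R}f(a_j),
\]
and averaging over $B(z,\rho)$ yields \eqref{eq:ida-decomposition} after enlarging $R$ by a fixed factor. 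All constants depend only on $\rho$, $R$, $q$, $n$, and the lattice overlap, because every estimate is made on translated Euclidean balls.

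The membership claims $f_1,f_2\in\mathcal D_\varphi$ need care, and I expect this to be the main obstacle, since $h_j$ is not controlled globally. Because $f\in\mathcal D_\varphi$, it suffices to show $f_2k_z\in L^1_\varphi$; then $f_1=f-f_2$ is also in $\mathcal D_\varphi$. By Hölder on cells and \eqref{eq:kernel-upper},
\[
\int|f_2k_z|e^{-\varphi}\dd v\lesssim\sum_je^{-c|z-a_j|}M_{q,\rho}f_2(a_j).
\]
By \eqref{eq:ida-decomposition} the right side is dominated by $\sum_je^{-c|z-a_j|}G_{q,R}f(a_j)$. This is finite because $\{G_{q,R}f(a_j)\}\in\xk\subset\ell^\infty$, using \eqref{eq:diagonal-linfty} together with the radius change in \cref{lem:discretization}. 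The same argument gives $f_2\in L^q_{\loc}$.

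Finally, \eqref{eq:ida-decomposition-norm} follows from \eqref{eq:ida-decomposition}. The quantities $M_{q,\delta}$ and $M_{q,\rho}$ are comparable in $\Xk$ by \cref{lem:discretization}. Taking cellwise suprema, the right side becomes a finite-neighbor sum of the lattice values $G_{q,R}f(a_j)$. Then \cref{lem:sequence-tail}(2) and \eqref{eq:discrete-continuous} bound this by $\|G_{q,R}f\|_{\Xk}$, which in turn is $\lesssim\|G_{q,\delta}f\|_{\Xk}$ by radius independence.
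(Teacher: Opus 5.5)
Your proposal is correct and follows the paper's route. Both use the partition-of-unity splitting $f_1=\sum_j\chi_jh_j$, $f_2=\sum_j\chi_j(f-h_j)$ with almost-best local approximants; the overlap bound for $h_j-h_k$ via $\sum_j\bar\partial\chi_j=0$ and the holomorphic submean inequality; membership in $\mathcal D_\varphi$ from kernel decay and $\xk\subset\ell^\infty$; and the norm estimate from finite-neighbor sums and \cref{lem:sequence-tail}. The only difference is cosmetic: you take the approximants on $B(a_j,R)$ directly, while the paper uses $B(a_j,8\rho)$, which amounts to choosing $R=8\rho$.
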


\begin{proof}
\emph{Construction.}
Choose a $\rho$-lattice $\{a_j\}$ with $\rho>0$ small.  Let $\{\chi_j\}$ be a $C^\infty$ partition of unity such that
\[
\supp\chi_j\subset B(a_j,2\rho),
\qquad
|\bar\partial\chi_j|\le C\rho^{-1},
\qquad
\sum_j\chi_j=1.
\]
For each $j$, choose $h_j\in\Hol(B(a_j,8\rho))$ satisfying
\begin{equation}\label{eq:almost-best}
\left(\avg_{B(a_j,8\rho)}|f-h_j|^q\dd v\right)^{1/q}
\le2G_{q,8\rho}f(a_j).
\end{equation}
Put
\[
f_1=\sum_j\chi_jh_j,
\qquad
f_2=f-f_1=\sum_j\chi_j(f-h_j).
\]
The sums are locally finite.  Hence $f_1\in C^\infty$ and
$f_2\in L^q_{\loc}$.  Suppose
$B(a_j,2\rho)\cap B(a_k,2\rho)\ne\varnothing$.  A fixed ball centered in
the overlap lies in both enlarged balls.  Thus \eqref{eq:almost-best} and
the holomorphic submean inequality give
\begin{equation}\label{eq:approximant-difference}
\sup_{B(a_j,2\rho)\cap B(a_k,2\rho)}|h_j-h_k|
\lesssim G_{q,8\rho}f(a_j)+G_{q,8\rho}f(a_k).
\end{equation}
Indeed, fix $x$ in the overlap.  Then
$B(x,\rho)\subset B(a_j,8\rho)\cap B(a_k,8\rho)$.  Since $h_j-h_k$ is holomorphic,
\begin{align*}
|h_j(x)-h_k(x)|
&\lesssim
\left(\avg_{B(x,\rho)}|h_j-h_k|^q\dd v\right)^{1/q}\\
&\le
\left(\avg_{B(x,\rho)}|h_j-f|^q\dd v\right)^{1/q}
+\left(\avg_{B(x,\rho)}|f-h_k|^q\dd v\right)^{1/q}\\
&\lesssim G_{q,8\rho}f(a_j)+G_{q,8\rho}f(a_k).
\end{align*}

\emph{Local estimates.}
Fix $z$ and select $j(z)$ with $z\in B(a_{j(z)},\rho)$.  Only a bounded
set $N(z)$ of indices occurs on $B(z,\rho)$.  The bound is uniform in
$z$.  From \eqref{eq:almost-best},
\begin{equation}\label{eq:f2-local}
M_{q,\rho}f_2(z)
\lesssim\sum_{j\in N(z)}G_{q,8\rho}f(a_j).
\end{equation}
Indeed, finite overlap and convexity give
\begin{align*}
\avg_{B(z,\rho)}|f_2|^q\dd v
&=\avg_{B(z,\rho)}
\left|\sum_{j\in N(z)}\chi_j(f-h_j)\right|^q\dd v\\
&\lesssim
\sum_{j\in N(z)}
\avg_{B(a_j,8\rho)}|f-h_j|^q\dd v
\lesssim
\sum_{j\in N(z)}G_{q,8\rho}f(a_j)^q.
\end{align*}
Moreover, $\sum_j\bar\partial\chi_j=0$, and hence
\[
\bar\partial f_1
=\sum_{j\in N(z)}\bar\partial\chi_j(h_j-h_{j(z)})
\quad\hbox{on }B(z,\rho).
\]
Together with \eqref{eq:approximant-difference}, this gives
\begin{equation}\label{eq:df1-local}
M_{q,\rho}(|\bar\partial f_1|)(z)
\lesssim\sum_{j\in N(z)}G_{q,8\rho}f(a_j).
\end{equation}
More precisely, \eqref{eq:approximant-difference} gives, for
$w\in B(z,\rho)$,
\begin{align*}
|\bar\partial f_1(w)|
&\le
\sum_{j\in N(z)}|\bar\partial\chi_j(w)|
|h_j(w)-h_{j(z)}(w)|\\
&\lesssim
\rho^{-1}\sum_{j\in N(z)}
\bigl(G_{q,8\rho}f(a_j)+G_{q,8\rho}f(a_{j(z)})\bigr).
\end{align*}
The fixed factor $\rho^{-1}$ is absorbed in the constant.
Equations \eqref{eq:f2-local} and \eqref{eq:df1-local} prove
\eqref{eq:ida-decomposition}.  On the lattice, both right-hand sides are
finite-neighbor sums.  Hence \cref{lem:sequence-tail,lem:discretization}
give
\[
\|M_{q,\delta}(|\bar\partial f_1|)\|_{\Xk}
+\|M_{q,\delta}f_2\|_{\Xk}
\lesssim\|G_{q,R}f\|_{\Xk}.
\]

\emph{Initial domains.}
It remains to verify the initial domains.  Since
\[
\{G_{q,8\rho}f(a_j)\}\in\xk\subset\ell^\infty,
\]
\eqref{eq:f2-local} gives uniformly bounded local $L^q$ masses of $f_2$.
The pointwise estimate preceding \eqref{eq:df1-local} also gives
\[
\|\bar\partial f_1\|_{L^\infty}
\lesssim
\|\{G_{q,8\rho}f(a_j)\}\|_{\ell^\infty}<\infty.
\]
For each fixed $z$, kernel decay and a lattice partition give
\begin{align*}
\int_{\C^n}|f_2(w)k_z(w)|e^{-\varphi(w)}\dd v(w)
&\lesssim\sum_j e^{-c|z-a_j|}M_{q,\rho}f_2(a_j)<\infty,\\
\int_{\C^n}|f_2(w)k_z(w)|^qe^{-q\varphi(w)}\dd v(w)
&\lesssim
\sum_j e^{-cq|z-a_j|}M_{q,\rho}f_2(a_j)^q<\infty,\\
\int_{\C^n}|\bar\partial f_1(w)|^q|k_z(w)|^q
e^{-q\varphi(w)}\dd v(w)
&\lesssim
\|\bar\partial f_1\|_{L^\infty}^q
\sum_j e^{-cq|z-a_j|}<\infty.
\end{align*}
Thus $f_2\in\mathcal D_\varphi$, and $f_1=f-f_2\in\mathcal D_\varphi$.
\end{proof}

We record the endpoint estimate used below.  Let $A_\varphi$ be the
Berndtsson--Andersson solution operator from
\cite[Lemma~2.4]{HuVirtanen2023}.  Thus
$\bar\partial A_\varphi\omega=\omega$ for every admissible
$\bar\partial$-closed $(0,1)$-form $\omega$.  For $x\in\C^n$, define
\begin{equation*}
\mathcal K(x)
=
\left(|x|^{-1}+|x|^{1-2n}\right)e^{-c_\varphi|x|^2},
\end{equation*}
where $c_\varphi>0$ depends only on the lower Hessian bound in
\eqref{eq:hessian}.
Here a $(0,1)$-form has the form
$\omega=\sum_{j=1}^n\omega_j\,d\bar z_j$, and
$|\omega|=(\sum_j|\omega_j|^2)^{1/2}$.  It is
$\bar\partial$-closed when $\bar\partial\omega=0$ in the distributional
sense.  The word \emph{admissible} means that $\omega$ belongs to the
domain of the cited solution operator.  Below we only use
$\omega=g\,\bar\partial u$, for which the displayed kernel estimate is
valid.
In the application, $u=f_1$ is smooth,
$\bar\partial f_1\in L^\infty$, and $g\in\Gamma$ is holomorphic.  Hence
\[
\bar\partial(g\,\bar\partial f_1)=0
\]
in the sense of distributions.  The kernel estimates proved in
\cref{lem:ida-decomposition} also place
$g\,\bar\partial f_1$ in the domain of $A_\varphi$.
The kernel estimate for $A_\varphi$ is
\begin{equation*}
|A_\varphi\omega(z)|e^{-\varphi(z)}
\lesssim
\int_{\C^n}|\omega(\xi)|e^{-\varphi(\xi)}
\mathcal K(z-\xi)\dd v(\xi),
\end{equation*}
for every such form $\omega$.  In polar coordinates,
\begin{align*}
\|\mathcal K\|_{L^1}
&\lesssim
\int_0^1\left(t^{2n-2}+1\right)\dd t
+\int_1^\infty
\left(t^{2n-2}+1\right)e^{-c_\varphi t^2}\dd t\\
&<\infty.
\end{align*}
At $q=1$, Tonelli's theorem gives the endpoint estimate directly:
\begin{align}
\|A_\varphi\omega\|_{1,\varphi}
&\le C\int_{\C^n}\int_{\C^n}
|\omega(\xi)|e^{-\varphi(\xi)}
\mathcal K(z-\xi)\dd v(\xi)\dd v(z)\notag\\
&=C\|\mathcal K\|_{L^1}
\|\omega\|_{1,\varphi}.
\label{eq:Aphi-L1-endpoint}
\end{align}
For $1<q\le\infty$, Young's inequality gives the same conclusion.  Thus
\begin{equation}\label{eq:Aphi-Lq}
\|A_\varphi\omega\|_{q,\varphi}
\lesssim\|\omega\|_{q,\varphi},
\qquad 1\le q\le\infty.
\end{equation}
For $u\in C^1(\C^n)\cap\mathcal D_\varphi$ with
$\bar\partial u\in L^\infty$, \cite[Corollary~2.5]{HuVirtanen2023} gives
\[
H_ug
=A_\varphi(g\bar\partial u)-P A_\varphi(g\bar\partial u),
\qquad g\in\Gamma.
\]
Since $P$ is bounded on $L_\varphi^q$, \eqref{eq:Aphi-Lq} gives
\begin{equation}\label{eq:canonical-solution}
\|H_ug\|_{q,\varphi}
\lesssim
\|g|\bar\partial u|\|_{q,\varphi},
\qquad 1\le q<\infty,\quad g\in\Gamma.
\end{equation}
In particular,
\begin{equation}\label{eq:canonical-solution-q-one}
\|H_ug\|_{1,\varphi}
\le C\int_{\C^n}|g(w)|\,|\bar\partial u(w)|
e^{-\varphi(w)}\dd v(w).
\end{equation}
The constants in \eqref{eq:Aphi-L1-endpoint} and
\eqref{eq:canonical-solution-q-one} are independent of ball centers,
lattices, and lattice translates.
For a general $u$,
\begin{equation}\label{eq:rough-hankel}
\|H_ug\|_{q,\varphi}
\le (1+\|P\|_{L_\varphi^q\to L_\varphi^q})\|ug\|_{q,\varphi}.
\end{equation}

The IDA decomposition, the endpoint solution estimate, and
\cref{thm:carleson} now give the Hankel characterization.

\begin{theorem}[Absolutely summing Hankel operators]\label{thm:hankel}
Let $1\le p,q,r<\infty$ and fix $\delta>0$.  If
$f\in\mathcal D_\varphi\cap L^q_{\loc}$, then
\[
H_f\in\Piabs_r(F_\varphi^p,L_\varphi^q)
\quad\Longleftrightarrow\quad
G_{q,\delta}f\in\Xk.
\]
Moreover,
\begin{equation*}
\pi_r(H_f)\asymp\|G_{q,\delta}f\|_{\Xk}.
\end{equation*}
\end{theorem}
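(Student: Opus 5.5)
Sufficiency goes through the IDA decomposition and necessity through Rademacher diagonal extraction, with the local IDA recovered from the extracted block norms.

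\emph{Sufficiency.} Assume $G_{q,\delta}f\in\Xk$. By \cref{lem:ida-decomposition} write $f=f_1+f_2$, so that $H_f=H_{f_1}+H_{f_2}$ on $\Gamma$.

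For the rough part, \eqref{eq:rough-hankel} gives the factorization $H_{f_2}=(I-P)M_{f_2}$. Then \cref{cor:multiplication} and \eqref{eq:ida-decomposition-norm} yield
\[
\pi_r(H_{f_2})\lesssim\pi_r(M_{f_2})\asymp\|M_{q,\delta}f_2\|_{\Xk}\lesssim\|G_{q,R}f\|_{\Xk}.
\]

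For the smooth part, the representation $H_{f_1}g=(I-P)A_\varphi(g\,\bar\partial f_1)$ factors $H_{f_1}$ as
\[
F_\varphi^p\xrightarrow{\;M_{|\bar\partial f_1|}\;}L_\varphi^q\xrightarrow{\;(I-P)\tilde A\;}L_\varphi^q .
\]
To make this rigorous I would write $g\,\bar\partial f_1=\sum_k g\,\omega_k$, where the $\omega_k$ are the components of $\bar\partial f_1$ with $|\omega_k|\le|\bar\partial f_1|$. The multiplication step $g\mapsto g\,\bar\partial f_1$ into the $\C^n$-valued space $L_\varphi^q$ is then $r$-summing with norm $\lesssim\|M_{q,\delta}(|\bar\partial f_1|)\|_{\Xk}$. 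This follows from \cref{cor:multiplication} applied to the measure $|\bar\partial f_1|^q\dd v$, using a unimodular-phase isometry as in that proof. The second factor is bounded by \eqref{eq:Aphi-Lq}, including $q=1$.

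One subtlety remains: $A_\varphi$ is only defined on $\bar\partial$-closed forms. I would therefore restrict to the closed subspace of forms of the shape $g\,\bar\partial f_1$, or simply apply the kernel bound for $A_\varphi$ pointwise. The cleanest route is to note that the kernel bound makes $\omega\mapsto A_\varphi\omega$ dominated by the integral operator $\omega\mapsto\mathcal K*(|\omega|e^{-\varphi})$, which is bounded on all of $L^q$. The ideal property then transfers $\pi_r$, and radius independence (\cref{lem:discretization}) finishes the upper bound.

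\emph{Necessity.} Assume $H_f\in\Piabs_r$. Apply \cref{prop:diagonal-extraction} with $\mu=v$ and $A=H_f$ on each of finitely many sufficiently separated sublattices of a lattice with small radius $\delta_0$. This shows that the sequence
\[
b_j=\|H_fk_{a_j}\|_{L^q(B(a_j,\delta_0),e^{-q\varphi}dv)}
\]
lies in $\xk$ with norm $\lesssim\pi_r(H_f)$.

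The key pointwise step, which I expect to be the main obstacle, is the local lower bound $G_{q,\delta_0}f(a_j)\lesssim b_j$. On $B(a_j,\delta_0)$ we have $H_fk_{a_j}=fk_{a_j}-T_fk_{a_j}$, and $T_fk_{a_j}$ is holomorphic. By \eqref{eq:kernel-lower}, $k_{a_j}$ is zero-free there, with $|k_{a_j}|e^{-\varphi}\asymp1$. Hence $h=T_fk_{a_j}/k_{a_j}$ is holomorphic on the ball, and
\[
|f-h|=\frac{|H_fk_{a_j}|e^{-\varphi}}{|k_{a_j}|e^{-\varphi}}\lesssim|H_fk_{a_j}|e^{-\varphi}
\]
there. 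This gives $G_{q,\delta_0}f(a_j)\lesssim b_j$ after averaging.

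Solidity of $\xk$, summation over the finitely many sublattices, and \cref{lem:discretization} (passing from lattice values to $\Xk$ and changing radius from $\delta_0$ to $\delta$) then give $\|G_{q,\delta}f\|_{\Xk}\lesssim\pi_r(H_f)$. The delicate points are that these comparisons are uniform in the center and that they include $q=1$, where only the integrable kernel $\mathcal K$ is used.
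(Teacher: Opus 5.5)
Your proposal is correct and follows essentially the same route as the paper. Sufficiency goes through \cref{lem:ida-decomposition}: $H_{f_1}$ is controlled by the norm domination $\|H_{f_1}g\|_{q,\varphi}\lesssim\|g|\bar\partial f_1|\|_{q,\varphi}$ from \eqref{eq:canonical-solution}, and $H_{f_2}$ by \eqref{eq:rough-hankel}, both reduced to the Carleson/multiplication theorem. It is this domination, not a literal factorization through $A_\varphi$, that transfers $\pi_r$, exactly as in the paper. Necessity goes through \cref{prop:diagonal-extraction} with $\mu=v$ and the holomorphic quotient $T_fk_{a_j}/k_{a_j}$, as in the paper.

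The only bookkeeping adjustment concerns passing from the lattice values $G_{q,\delta_0}f(a_j)$ to the $\Xk$ norm via \cref{lem:discretization}. For that step the IDA radius should be a fixed multiple of the lattice spacing; the paper uses a $\rho$-lattice with balls $B(a_j,4\rho)$. You can arrange this by simply choosing a finer lattice.
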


\begin{proof}
\emph{Sufficiency.}
Assume first that $G_{q,\delta}f\in\Xk$.  Apply \cref{lem:ida-decomposition} and put
\[
\dd\mu_1=|\bar\partial f_1|^q\dd v,
\qquad
\dd\mu_2=|f_2|^q\dd v.
\]
Equations \eqref{eq:ida-decomposition-norm} and \eqref{eq:mu-density} show that
\[
\|\widehat{\mu_1}_{q,\delta}\|_{\Xk}
+\|\widehat{\mu_2}_{q,\delta}\|_{\Xk}
\lesssim\|G_{q,R}f\|_{\Xk}.
\]
By \cref{thm:carleson}, the embeddings $J_{\mu_1}$ and $J_{\mu_2}$ are
$r$-summing.  Let $g_1,\ldots,g_N\in\Gamma$.  Equation
\eqref{eq:canonical-solution} gives
\[
\left(\sum_{j=1}^N\|H_{f_1}g_j\|_{q,\varphi}^r\right)^{1/r}
\lesssim
\left(\sum_{j=1}^N\|J_{\mu_1}g_j\|_{L_\varphi^q(\mu_1)}^r\right)^{1/r}.
\]
The right side is bounded by $\pi_r(J_{\mu_1})$ times the weak $r$-norm of $(g_j)$.  Thus
\[
\pi_r(H_{f_1})\lesssim\pi_r(J_{\mu_1}).
\]
For the second part, \eqref{eq:rough-hankel} gives
\[
\|H_{f_2}g\|_{q,\varphi}
\lesssim\|g f_2\|_{q,\varphi}
=\|J_{\mu_2}g\|_{L_\varphi^q(\mu_2)}.
\]
Apply the $r$-summing inequality for $J_{\mu_2}$ to a finite family.
This gives
\[
\pi_r(H_{f_2})\lesssim\pi_r(J_{\mu_2}).
\]
Consequently,
\begin{equation}\label{eq:hankel-upper}
\pi_r(H_f)
\le\pi_r(H_{f_1})+\pi_r(H_{f_2})
\lesssim\|G_{q,\delta}f\|_{\Xk}.
\end{equation}
\emph{The endpoint $q=1$.}
For $q=1$, the preceding estimate uses no reflexive duality.  Indeed,
\eqref{eq:canonical-solution-q-one} and
\eqref{eq:rough-hankel} give, for every finite family
$g_1,\ldots,g_N\in\Gamma$,
\begin{align*}
\left(\sum_{j=1}^N\|H_{f_1}g_j\|_{1,\varphi}^r\right)^{1/r}
&\lesssim
\left(\sum_{j=1}^N
\|g_j|\bar\partial f_1|\|_{1,\varphi}^r\right)^{1/r},\\
\left(\sum_{j=1}^N\|H_{f_2}g_j\|_{1,\varphi}^r\right)^{1/r}
&\lesssim
\left(\sum_{j=1}^N
\|g_jf_2\|_{1,\varphi}^r\right)^{1/r}.
\end{align*}
The two right-hand sides are bounded by
\[
C\bigl(\pi_r(J_{\mu_1})+\pi_r(J_{\mu_2})\bigr)
w_r((g_j);F_\varphi^p).
\]
Thus the endpoint follows from the same Carleson theorem, with constants
independent of the centers and of lattice translates.

\emph{Necessity.}
Conversely, suppose that $H_f$ is $r$-summing.  Choose $\rho>0$ so that
\eqref{eq:kernel-lower} holds on $B(a,4\rho)$.  Let $\{a_j\}$ be a
$\rho$-lattice and write it as
$\Lambda_1\cup\cdots\cup\Lambda_N$, with each subfamily sufficiently
separated.  The function $k_{a_j}$ has no zeros on $B(a_j,4\rho)$.  Hence
\[
Q_j(z)=\frac{P(fk_{a_j})(z)}{k_{a_j}(z)}
\]
is holomorphic there.  Using \eqref{eq:kernel-lower},
\begin{align*}
\|H_fk_{a_j}\|_{L^q(B(a_j,4\rho),e^{-q\varphi}\dd v)}^q
&=\int_{B(a_j,4\rho)}|f-Q_j|^q|k_{a_j}|^qe^{-q\varphi}\dd v\\
&\gtrsim\int_{B(a_j,4\rho)}|f-Q_j|^q\dd v\\
&\gtrsim G_{q,4\rho}f(a_j)^q.
\end{align*}
On each separated subfamily, apply \cref{prop:diagonal-extraction} with
$A=H_f$ and $\mu=v$.  The local $L^q$ norm above is at most the extracted
coefficient.  The ideal property and \cref{thm:diagonal} give
\[
\|\{G_{q,4\rho}f(a_j):a_j\in\Lambda_\nu\}\|_{\xk}
\lesssim\pi_r(H_f).
\]
The estimate is uniform in $1\le\nu\le N$.  Combine the subfamilies and
use the triangle inequality in $\xk$.  Discretization and radius
independence then give
\begin{equation}\label{eq:hankel-lower}
\|G_{q,\delta}f\|_{\Xk}\lesssim\pi_r(H_f).
\end{equation}
Combining \eqref{eq:hankel-upper} and \eqref{eq:hankel-lower} proves the theorem.
\end{proof}

\subsection{Operator diagonals and scalar recovery}

The IDA seminorm does not see an entire summand.  We recover that summand
from the Berezin transform or a complex ball average.  The local quotient
construction and all sequence-space estimates are proved below.  This
includes the logarithmic Orlicz endpoint.

For $f\in\mathcal D_\varphi\cap L^q_{\loc}$, put, whenever the norm is finite,
\begin{equation*}
E_f(z)=\|H_fk_z\|_{q,\varphi}\in[0,\infty].
\end{equation*}
Thus $E_f(z)=\infty$ when the core vector $H_fk_z$ does not belong to
$L_\varphi^q$.

We first treat an arbitrary operator.  The next estimate is the continuous
Fock-space version of taking the diagonal of a matrix from $\ell^p$ to
$\ell^q$.

\begin{proposition}\label{prop:operator-diagonal}
Let $1\le p<\infty$, $1\le q<\infty$, $1\le r<\infty$, and let $A\in\Piabs_r(F_\varphi^p,F_\varphi^q)$.  Define
\[
d_A(z)=\frac{(Ak_z)(z)}{K(z,z)^{1/2}}.
\]
Then $d_A\in\Xk$ and
\begin{equation}\label{eq:operator-diagonal}
\|d_A\|_{\Xk}\lesssim\pi_r(A).
\end{equation}
For every sufficiently separated sequence $\Lambda=\{a_j\}$,
\begin{equation}\label{eq:operator-diagonal-discrete}
\|\{d_A(a_j)\}\|_{\xk}\lesssim\pi_r(A).
\end{equation}
The discrete constant depends only on the fixed separation parameter and
the structural constants in \eqref{eq:hessian}; it is independent of $A$
and of translations of $\Lambda$.
In particular, if $A=T_f$, then $d_A=\widetilde f$.
\end{proposition}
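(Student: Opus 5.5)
The plan is to reduce \eqref{eq:operator-diagonal-discrete} to the Rademacher diagonal extraction of \cref{prop:diagonal-extraction}, and then pass to \eqref{eq:operator-diagonal} by a continuity argument on the cells. The identity $d_{T_f}=\widetilde f$ is just \eqref{eq:berezin-evaluation}.

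\emph{Discrete estimate.} Let $\Lambda=\{a_j\}$ be sufficiently separated, with $B(a_j,\delta_0)$ pairwise disjoint and $S_\Lambda$ bounded. Here $\delta_0$ is small enough that \eqref{eq:kernel-lower} holds on $B(a,\delta_0)$. Compose $A$ with the isometric inclusion $\iota:F_\varphi^q\hookrightarrow L_\varphi^q=L_\varphi^q(v)$. \Cref{prop:diagonal-extraction}, with $\mu=v$, gives
\[
b_j=\|Ak_{a_j}\|_{L^q(B(a_j,\delta_0),e^{-q\varphi}\dd v)},\qquad \pi_r(D_b)\lesssim\pi_r(A).
\]
Next we show $|d_A(a_j)|\lesssim b_j$. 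By the gauge \eqref{eq:gauge-norm} and the submean property applied to the holomorphic function $(Ak_{a_j})e^{-L_{a_j}}$ on $B(a_j,\delta_0)$,
\[
|Ak_{a_j}(a_j)|e^{-\varphi(a_j)}\lesssim b_j .
\]
Since $K(a_j,a_j)^{1/2}\asymp e^{\varphi(a_j)}$, this yields $|d_A(a_j)|\lesssim b_j$. Solidity in \cref{thm:diagonal} then proves \eqref{eq:operator-diagonal-discrete}. The constants come only from the gauge lemma and the kernel bounds, so they are translation invariant.

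\emph{Continuous estimate.} We must control $\operatorname*{ess\,sup}_{E_j}|d_A|$, not just the values at lattice points. The main obstacle is that $d_A$ is not holomorphic, so submean arguments do not apply to it directly. Instead we use a translation-averaging trick. For each $z\in E_j$, choose a point $z_j\in\overline{E_j}$ with $|d_A(z_j)|\ge\frac12\sup_{E_j}|d_A|$. Here $d_A$ is continuous: $z\mapsto k_z$ is continuous into $F_\varphi^p$ after the phase normalization, and point evaluation is locally uniform. It is cleaner, though, to avoid continuity altogether. The set $\{z_j\}$, indexed by $j\in\mathbb Z^{2n}$, lies within distance $\sqrt{2n}$ of the reference lattice. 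Split it into finitely many subfamilies $\{z_j:j\in m+N\mathbb Z^{2n}\}$, with $N$ large and fixed, and each subfamily is sufficiently separated. Apply the discrete estimate to each subfamily. Its constant depends only on the separation, which is uniform and not tied to exact lattice positions, since \cref{prop:diagonal-extraction} uses only disjointness and the synthesis bound, and both hold uniformly for $N$ large. Summing the finitely many subfamilies with the triangle inequality and solidity gives
\[
\|\{\sup_{E_j}|d_A|\}\|_{\xk}\lesssim\pi_r(A),
\]
which is \eqref{eq:operator-diagonal}.

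If continuity of $d_A$ is in doubt, we replace the supremum by points realizing the essential supremum up to a factor $2$; such points exist in any set of positive measure. The same splitting applies.
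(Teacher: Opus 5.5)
Your proof is correct. The continuous half matches the paper's argument: pick near-maximizers $z_j$ of $|d_A|$ in the reference cells, split the index set into finitely many sufficiently separated subfamilies, apply the discrete bound to each, and recombine by solidity and the triangle inequality.

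The discrete half takes a different route.

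\emph{The paper's route.} It sandwiches $A$ between the synthesis map $S_\Lambda$ and the sampling map $U_\Lambda g=\{g(a_j)/K(a_j,a_j)^{1/2}\}_j$. The values $d_A(a_j)$ are then exactly the diagonal entries of the matrix $B_\Lambda=U_\Lambda AS_\Lambda:\ell^p\to\ell^q$. The paper averages $D_{\varepsilon(t)}P_JB_\Lambda P_JD_{\varepsilon(t)}$ over Rademacher signs on finite compressions. It then passes to the infinite diagonal by a supremum over finite $J$.

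\emph{Your route.} You apply the already-proved \cref{prop:diagonal-extraction} to $\iota A$ with $\mu=v$. This places the local column norms $b_j=\|Ak_{a_j}\|_{L^q(B(a_j,\delta_0),e^{-q\varphi}\dd v)}$ in $\xk$. You then dominate $|d_A(a_j)|\lesssim b_j$ by the gauged submean inequality.

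\emph{What each buys.} Your version is shorter inside the paper. It needs no separate finite-to-infinite passage, because that is absorbed into \cref{prop:diagonal-extraction}. It also gives the slightly stronger fact that the local column norms themselves lie in $\xk$. The paper's version is self-contained and exhibits $d_A$ literally as the diagonal of an $\ell^p\to\ell^q$ matrix, which is the viewpoint the proposition is meant to convey. Both routes use the same two ingredients: bounded synthesis, and holomorphy of the target for a submean estimate. So in either case the constants depend only on the separation and the Hessian bounds.

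Two minor points. First, the lower kernel bound plays no role in your discrete step; only the submean inequality is used. Second, you cannot fully dispense with continuity of $d_A$. Measurability is needed for $\operatorname*{ess\,sup}$ and for membership in $\Xk$ to make sense. The continuity you justify does supply this, and no phase normalization of $k_z$ is required for it.
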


\begin{proof}
\emph{Separated samples.}
Let $\Lambda=\{a_j\}$ be sufficiently separated.  Kernel decay and the weighted submean inequality give bounded maps
\begin{align*}
S_\Lambda:\ell^p&\longrightarrow F_\varphi^p,
&S_\Lambda c&=\sum_jc_jk_{a_j},\\
U_\Lambda:F_\varphi^q&\longrightarrow\ell^q,
&U_\Lambda g&=\left\{\frac{g(a_j)}{K(a_j,a_j)^{1/2}}\right\}_j.
\end{align*}
For the evaluation map, choose disjoint balls $B(a_j,\rho)$ and use the weighted submean estimate:
\[
\sum_j\left|\frac{g(a_j)}{K(a_j,a_j)^{1/2}}\right|^q
\lesssim
\sum_j\int_{B(a_j,\rho)}|g(w)|^qe^{-q\varphi(w)}\dd v(w)
\lesssim\|g\|_{q,\varphi}^q.
\]

Set $B_\Lambda=U_\Lambda AS_\Lambda$.  Then
\begin{equation*}
\pi_r(B_\Lambda)
\le\|U_\Lambda\|\pi_r(A)\|S_\Lambda\|
\lesssim\pi_r(A),
\end{equation*}
and the diagonal entries of $B_\Lambda$ are $d_A(a_j)$.  For a finite set
$J\subset\N$, let $P_J$ be the coordinate projection and let
$D_\varepsilon$ be multiplication by independent signs.  For a finite
matrix $B$, $\diag(B)$ denotes the diagonal operator having the same
diagonal entries as $B$.  Rademacher averaging gives
\[
\diag(P_JB_\Lambda P_J)
=\int_0^1D_{\varepsilon(t)}P_JB_\Lambda P_JD_{\varepsilon(t)}\dd t.
\]
Thus
\begin{equation*}
\pi_r\bigl(D_{\{d_A(a_j):j\in J\}}\bigr)
\le\int_0^1
\pi_r(D_{\varepsilon(t)}P_JB_\Lambda P_JD_{\varepsilon(t)})\dd t
\le\pi_r(B_\Lambda).
\end{equation*}
We now pass from finite to infinite diagonals.  The estimate for an
infinite diagonal is at least the estimate for each finite compression.
For the reverse inequality, first take vectors in $c_{00}$.  One finite
set $J$ contains all their supports, so the finite estimate applies.
For general vectors in $\ell^p$, truncate their coordinates and use
Fatou's lemma in the target $\ell^q$.  Therefore
\begin{equation*}
\pi_r(D_{\{d_A(a_j)\}})
=\sup_{J\subset\N,\,J\text{ finite}}
\pi_r(D_{\{d_A(a_j):j\in J\}})
\lesssim\pi_r(A).
\end{equation*}
By \eqref{eq:intrinsic-diagonal-ideal}, this is precisely
\eqref{eq:operator-diagonal-discrete}.

\emph{Passage to the continuous norm.}
Kernel continuity implies that $z\mapsto k_z$ is continuous in
$F_\varphi^p$.  Weighted point evaluations are locally uniform on bounded
subsets of $F_\varphi^q$.  Hence $d_A$ is continuous.  Equation
\eqref{eq:kernel-norm} and the point-evaluation estimate give
\begin{equation*}
|d_A(z)|
\lesssim\|Ak_z\|_{q,\varphi}
\le\|A\|\|k_z\|_{p,\varphi}
\lesssim\|A\|,
\qquad z\in\C^n.
\end{equation*}
Let
$\{E_j\}$ be the reference partition in
\eqref{eq:reference-partition}.  Choose $z_j\in E_j$ so that
\begin{equation*}
\operatorname*{ess\,sup}_{z\in E_j}|d_A(z)|
\le2|d_A(z_j)|.
\end{equation*}
Partition the indices into finitely many sets
$I_1,\ldots,I_N$ such that each sequence $\{z_j:j\in I_\nu\}$ is sufficiently separated.  Apply
\eqref{eq:operator-diagonal-discrete} to every subsequence.  Solidity and the triangle inequality give
\begin{align*}
\|d_A\|_{\Xk}
&\le2\left\|\{|d_A(z_j)|\}_j\right\|_{\xk}\\
&\le2\sum_{\nu=1}^N
\left\|\{|d_A(z_j)|\mathbf1_{I_\nu}(j)\}_j\right\|_{\xk}
\lesssim\pi_r(A).
\end{align*}
This proves \eqref{eq:operator-diagonal}.
If $A=T_f$, then \eqref{eq:berezin-evaluation} and the definition of
$d_A$ give
\[
d_{T_f}(z)=\widetilde f(z).
\]
\end{proof}

The complex ball average admits a direct recovery argument.  Unlike the Berezin transform, it does not require kernel quotients.

\begin{lemma}\label{lem:complex-average}
Let $1\le p,q,r<\infty$ and $f\in L^q_{\loc}$.  For every fixed $\delta>0$,
define $A_\rho u=\avg_{B(\cdot,\rho)}u\dd v$.  Then
\begin{equation}\label{eq:complex-average-pointwise}
M_{q,\delta}f(z)
\lesssim G_{q,4\delta}f(z)
+A_{2\delta}(|a_\delta f|)(z).
\end{equation}
Consequently,
\begin{equation*}
\|M_{q,\delta}f\|_{\Xk}
\asymp
\|a_\delta f\|_{\Xk}+\|G_{q,\delta}f\|_{\Xk}.
\end{equation*}
\end{lemma}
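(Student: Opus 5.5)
The plan is to fix $z$ and take an almost best holomorphic approximant $h\in\Hol(B(z,4\delta))$ with $\bigl(\avg_{B(z,4\delta)}|f-h|^q\dd v\bigr)^{1/q}\le 2G_{q,4\delta}f(z)$. Minkowski's inequality then gives
\[
M_{q,\delta}f(z)\le \Bigl(\avg_{B(z,\delta)}|f-h|^q\dd v\Bigr)^{1/q}+\Bigl(\avg_{B(z,\delta)}|h|^q\dd v\Bigr)^{1/q}
\lesssim G_{q,4\delta}f(z)+\sup_{B(z,\delta)}|h|.
\]
The whole problem therefore reduces to controlling the holomorphic part $h$ on $B(z,\delta)$ by complex averages of $f$. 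For each $w\in B(z,2\delta)$ we have $B(w,\delta)\subset B(z,3\delta)$. The mean value property of the holomorphic (hence harmonic) function $h$ gives $h(w)=a_\delta h(w)$. Hence
\[
|h(w)|\le |a_\delta f(w)|+\avg_{B(w,\delta)}|f-h|\dd v
\le |a_\delta f(w)|+C\,G_{q,4\delta}f(z),
\]
where Hölder's inequality (normalized averages, $q\ge1$) and the comparability of the volumes of $B(w,\delta)$ and $B(z,4\delta)$ are used.

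Next I pass from the pointwise bound at $w$ to a bound on $\sup_{B(z,\delta)}|h|$. I will not take a supremum of $|a_\delta f|$; instead I use subharmonicity once more. For $x\in B(z,\delta)$ the ball $B(x,\delta)$ lies in $B(z,2\delta)$, so
\[
|h(x)|\le \avg_{B(x,\delta)}|h|\dd v\lesssim \avg_{B(z,2\delta)}|h|\dd v
\le A_{2\delta}(|a_\delta f|)(z)+C\,G_{q,4\delta}f(z).
\]
Inserting this into the first display proves \eqref{eq:complex-average-pointwise}. The point of this step is that only an average of $|a_\delta f|$ appears, which is what makes the lattice-amalgam estimate available.

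For the norm equivalence, the upper bound on $M_{q,\delta}f$ comes from \eqref{eq:complex-average-pointwise} and the convolution estimate \eqref{eq:convolution-Xk}. The operator $A_{2\delta}$ is convolution with a normalized ball kernel, so $\|A_{2\delta}(|a_\delta f|)\|_{\Xk}\lesssim\|a_\delta f\|_{\Xk}$. Radius independence in \cref{lem:discretization} converts $\|G_{q,4\delta}f\|_{\Xk}$ into $\|G_{q,\delta}f\|_{\Xk}$ and also normalizes the radius on the left. For the reverse bound, $|a_\delta f|\le M_{1,\delta}f\le M_{q,\delta}f$ by Hölder, and $G_{q,\delta}f\le M_{q,\delta}f$ is \eqref{eq:G-less-M}; solidity of $\Xk$ then finishes the argument.

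The main obstacle is making sure the pointwise inequality is stated with an averaging operator and not a local supremum of $a_\delta f$. A supremum would require the finite-neighbor comparison of \cref{lem:sequence-tail} applied to ess sups over neighbouring cells. That route also works through \eqref{eq:finite-neighbor}, but the version with $A_{2\delta}$ fits \eqref{eq:convolution-Xk} directly. I also need to check that the radius bookkeeping keeps every ball inside $B(z,4\delta)$: the balls used are $B(w,\delta)$ with $w\in B(z,2\delta)$, and $B(z,2\delta)$ itself, so this holds.
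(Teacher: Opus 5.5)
Your proposal is correct and follows essentially the same route as the paper. You take an almost best approximant $h$ on $B(z,4\delta)$, use the mean value property with H\"older to get $|h(w)-a_\delta f(w)|\lesssim G_{q,4\delta}f(z)$ on $B(z,2\delta)$, and apply the submean inequality to bound $h$ on $B(z,\delta)$ by $\avg_{B(z,2\delta)}|h|\dd v$. For the norm equivalence you then use the ball-kernel convolution estimate, radius independence, and the bounds $|a_\delta f|\le M_{q,\delta}f$ and $G_{q,\delta}f\le M_{q,\delta}f$, exactly as the paper does. Your passage through $\sup_{B(z,\delta)}|h|$ only spells out the submean step that the paper compresses into one line.
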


\begin{proof}
Fix $z$ and choose $h\in\Hol(B(z,4\delta))$ such that
\[
\left(\avg_{B(z,4\delta)}|f-h|^q\dd v\right)^{1/q}
\le2G_{q,4\delta}f(z).
\]
If $w\in B(z,2\delta)$, then $B(w,\delta)\subset B(z,4\delta)$.  The mean-value property and H\"older's inequality give
\[
|h(w)-a_\delta f(w)|
=|a_\delta(h-f)(w)|
\lesssim G_{q,4\delta}f(z).
\]
The holomorphic submean inequality, now applied to $h$ on $B(z,2\delta)$, yields
\begin{align*}
M_{q,\delta}f(z)
&\le M_{q,\delta}(f-h)(z)+M_{q,\delta}h(z)\\
&\lesssim G_{q,4\delta}f(z)
+\avg_{B(z,2\delta)}|h(w)|\dd v(w)\\
&\lesssim G_{q,4\delta}f(z)+A_{2\delta}(|a_\delta f|)(z).
\end{align*}
Taking the $\Xk$-norm in \eqref{eq:complex-average-pointwise} gives
\[
\|M_{q,\delta}f\|_{\Xk}
\lesssim
\|G_{q,4\delta}f\|_{\Xk}
+\|A_{2\delta}(|a_\delta f|)\|_{\Xk}.
\]
By \cref{lem:discretization} and \eqref{eq:convolution-Xk},
\[
\|M_{q,\delta}f\|_{\Xk}
\lesssim
\|G_{q,\delta}f\|_{\Xk}+\|a_\delta f\|_{\Xk}.
\]
The reverse bound follows from
\[
|a_\delta f|\le M_{q,\delta}f,
\qquad
G_{q,\delta}f\le M_{q,\delta}f.
\]
\end{proof}

To control the error in scalar recovery, we first estimate the Hankel
columns $H_fk_z$.

\begin{lemma}\label{lem:hankel-columns}
Let $1\le p,q,r<\infty$.  Fix $R>0$.
Suppose that
\[
f\in\mathcal D_\varphi\cap L^q_{\loc},
\qquad G_{q,R}f\in\Xk.
\]
There are $R_1>R$ and
$c,C>0$ such that
\begin{equation}\label{eq:hankel-column-convolution}
E_f(z)^q
\le C\int_{\C^n}e^{-c|z-w|}G_{q,R_1}f(w)^q\dd v(w).
\end{equation}
Consequently,
\begin{equation}\label{eq:E-norm}
\|E_f\|_{\Xk}\lesssim\|G_{q,R}f\|_{\Xk}.
\end{equation}
\end{lemma}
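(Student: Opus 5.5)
The plan is to apply the IDA decomposition of \cref{lem:ida-decomposition} to $f$ with the given radius $R$. This produces $f=f_1+f_2$ with
\[
M_{q,\rho}(|\bar\partial f_1|)(w)+M_{q,\rho}f_2(w)
\lesssim\sum_{a_j\in B(w,R')}G_{q,R'}f(a_j),
\]
where $R'$ is a fixed enlargement of $R$. We then split $H_fk_z=H_{f_1}k_z+H_{f_2}k_z$ on the core; this is legitimate because $k_z\in\Gamma$ and $f_1,f_2\in\mathcal D_\varphi$.

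\emph{The smooth part.} By \eqref{eq:canonical-solution},
\[
\|H_{f_1}k_z\|_{q,\varphi}^q
\lesssim\int|k_z(w)|^qe^{-q\varphi(w)}|\bar\partial f_1(w)|^q\dd v(w).
\]
Using \eqref{eq:kernel-upper}, this is at most $\int e^{-cq|z-w|}|\bar\partial f_1(w)|^q\dd v(w)$.

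\emph{The rough part.} By \eqref{eq:rough-hankel}, we similarly get
\[
\|H_{f_2}k_z\|_{q,\varphi}^q\lesssim\int e^{-cq|z-w|}|f_2(w)|^q\dd v(w).
\]

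\emph{Converting to $G$.} Both integrands $|\bar\partial f_1|^q$ and $|f_2|^q$ must now be replaced by local means. Since $e^{-cq|z-w|}\asymp e^{-cq|z-w'|}$ for $|w-w'|<\rho$, an averaging (Fubini) argument gives
\[
\int e^{-c'|z-w|}|u(w)|^q\dd v(w)\lesssim\int e^{-c'|z-w|}M_{q,\rho}u(w)^q\dd v(w).
\]
Inserting the decomposition bound gives an expression of the form
\[
\sum_j e^{-c''|z-a_j|}G_{q,R'}f(a_j)^q,
\]
using $(\sum_{\text{bounded}})^q\lesssim\sum(\cdot)^q$ with bounded multiplicity. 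Each lattice value $G_{q,R'}f(a_j)$ is dominated by $G_{q,R_1}f(w)$ for all $w\in B(a_j,c\rho)$ once $R_1\ge R'+\rho$, by restricting an almost best approximant from the larger ball. Averaging over these disjoint small balls turns the sum back into the integral in \eqref{eq:hankel-column-convolution}, with a slightly smaller exponential rate. The step needing care is that the decomposition in \cref{lem:ida-decomposition} requires $G_{q,\cdot}f\in\Xk$ only to guarantee the domain memberships. The pointwise estimates themselves are local, so the constants are uniform.

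For \eqref{eq:E-norm}, take $q$th roots. The map $u\mapsto(\mathcal L*u^q)^{1/q}$ is not directly a convolution, so I would discretize instead. On a cell $E_j$, write $c_k=\operatorname{ess\,sup}_{E_k}G_{q,R_1}f$. The bound above gives
\[
E_f(z)\lesssim\Bigl(\sum_k e^{-c|j-k|}c_k^q\Bigr)^{1/q}\le\sum_k e^{-c|j-k|/q}c_k,
\]
using $\|\cdot\|_{\ell^q}\le\|\cdot\|_{\ell^1}$ in the form $(\sum \lambda_kc_k^q)^{1/q}\le\sum\lambda_k^{1/q}c_k$. Then the translation-and-symmetry argument from the proof of \eqref{eq:convolution-Xk} bounds $\|E_f\|_{\Xk}\lesssim\|\{e^{-c|m|/q}\}\|_{\ell^1}\|G_{q,R_1}f\|_{\Xk}$. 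Finally, radius independence from \cref{lem:discretization} replaces $R_1$ by $R$. I expect the main obstacle to be this $q$th-root passage and keeping the exponential summable after taking $1/q$ powers, but the elementary inequality above handles it.
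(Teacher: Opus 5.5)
Your proposal is correct and follows essentially the same route as the paper. You use the IDA split, the canonical-solution bound for $H_{f_1}k_z$ and \eqref{eq:rough-hankel} for $H_{f_2}k_z$ together with kernel decay, then ball inclusion to turn the lattice sum into an integral of $G_{q,R_1}f^q$, and finally $(\sum_k\lambda_kc_k^q)^{1/q}\le\sum_k\lambda_k^{1/q}c_k$ with the translation--symmetry convolution bound in $\xk$.

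The differences are cosmetic: Fubini averaging instead of integrating over the lattice balls, and reference-cell suprema of $G_{q,R_1}f$ instead of point values at an enlarged radius $R_2$. One small correction: the radius produced by \cref{lem:ida-decomposition} is its own fixed construction radius, not an enlargement of your $R$. This is harmless, since $R_1$ may be taken as large as needed.
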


\begin{proof}
Use the decomposition $f=f_1+f_2$ from
\cref{lem:ida-decomposition}.  Combine \eqref{eq:kernel-upper}, the
canonical solution estimate, and \eqref{eq:rough-hankel}.  This gives
\[
E_f(z)^q
\lesssim
\int_{\C^n}e^{-c|z-w|}
\bigl(|\bar\partial f_1(w)|^q+|f_2(w)|^q\bigr)\dd v(w).
\]
Let $\{a_m\}$ be the lattice in \cref{lem:ida-decomposition}.  After a
fixed enlargement of the radius, put
\[
g_m=G_{q,R_0}f(a_m),
\qquad
s_m=\sum_{a_\ell\in B(a_m,R_0)}g_\ell.
\]
By \eqref{eq:ida-decomposition} and finite overlap,
\begin{align*}
\int_{B(a_m,\rho)}
\bigl(|\bar\partial f_1|^q+|f_2|^q\bigr)\dd v
&\lesssim s_m^q,\\
s_m^q
&\lesssim
\sum_{a_\ell\in B(a_m,R_0)}g_\ell^q.
\end{align*}
Therefore
\begin{align*}
E_f(z)^q
&\lesssim
\sum_m e^{-c|z-a_m|}
\sum_{a_\ell\in B(a_m,R_0)}g_\ell^q\\
&\lesssim
\sum_\ell e^{-c_1|z-a_\ell|}g_\ell^q.
\end{align*}
For $w\in B(a_\ell,\rho)$, ball inclusion gives
\[
g_\ell\lesssim G_{q,R_1}f(w)
\]
with a fixed $R_1>R_0+\rho$.  Hence
\[
e^{-c_1|z-a_\ell|}g_\ell^q
\lesssim
\int_{B(a_\ell,\rho)}
e^{-c_2|z-w|}G_{q,R_1}f(w)^q\dd v(w).
\]
Summing in $\ell$ proves \eqref{eq:hankel-column-convolution}.

To prove \eqref{eq:E-norm}, discretize on a unit lattice $\{a_j\}$.
By ball inclusion, there is a fixed $R_2>R_1$ such that
\[
G_{q,R_1}f(w)\lesssim G_{q,R_2}f(a_m),
\qquad w\in B(a_m,1).
\]
If
\[
e_j=\sup_{B(a_j,1)}E_f,
\qquad
g_j=G_{q,R_2}f(a_j),
\]
then
\[
e_j
\le C\left(\sum_m e^{-c|a_j-a_m|}g_m^q\right)^{1/q}
\le C\sum_m e^{-c|a_j-a_m|/q}g_m.
\]
Convolution by the exponentially decreasing sequence is bounded on
$\xk$.  Indeed, if $\tau_\ell$ is a coordinate translation, symmetry and the triangle inequality give
\[
\left\|\sum_\ell e^{-c|\ell|/q}\tau_\ell g\right\|_{\xk}
\le
\sum_\ell e^{-c|\ell|/q}\|\tau_\ell g\|_{\xk}
\lesssim\|g\|_{\xk}.
\]
Thus
\[
\|e\|_{\xk}\lesssim\|g\|_{\xk}.
\]
If $\{E_j\}$ is the lattice partition from \cref{lem:discretization}, then
$E_f\le\sum_je_j\mathbf1_{E_j}$.  Definition~\eqref{eq:intrinsic-function-space} and solidity give
\[
\|E_f\|_{\Xk}\lesssim\|e\|_{\xk}.
\]
Finally, radius independence gives
\[
\|g\|_{\xk}
\lesssim\|G_{q,R_2}f\|_{\Xk}
\asymp\|G_{q,R}f\|_{\Xk}.
\]
This proves \eqref{eq:E-norm}.
\end{proof}

The next lemma recovers the local size of $f$ from the scalar diagonal
$\widetilde f$ and the controlled error $E_f$.

\begin{lemma}[Local scalar recovery]\label{lem:local-recovery}
Let $1\le q<\infty$ and let
$f\in\mathcal D_\varphi\cap L^q_{\loc}$.  Suppose that $E_f(z)<\infty$
for every $z$.  There is a sufficiently small $\rho>0$ such that
\begin{equation}\label{eq:local-recovery}
M_{q,\rho}f(z)
\le C\left(
E_f(z)
+A_{4\rho}(|\widetilde f|)(z)
+A_{4\rho}E_f(z)
\right).
\end{equation}
\end{lemma}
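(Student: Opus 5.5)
Fix $z$, choose $\rho>0$ so that \eqref{eq:kernel-lower} holds on $B(z,8\rho)$ uniformly in $z$, and work with the holomorphic quotient from the necessity part of \cref{thm:hankel}:
\[
Q_z(w)=\frac{P(fk_z)(w)}{k_z(w)},\qquad w\in B(z,8\rho).
\]
Since $k_z$ has no zeros there, $Q_z$ is holomorphic on $B(z,8\rho)$, and $f-Q_z=H_fk_z/k_z$ on that ball. The kernel lower bound then gives
\[
\Bigl(\avg_{B(z,8\rho)}|f-Q_z|^q\dd v\Bigr)^{1/q}\lesssim \|H_fk_z\|_{q,\varphi}=E_f(z).
\]
By the triangle inequality,
\[
M_{q,\rho}f(z)\lesssim E_f(z)+M_{q,\rho}Q_z(z).
\]
The proof therefore reduces to bounding the holomorphic part $Q_z$ on $B(z,\rho)$ by the scalar data $\widetilde f$ and $E_f$ on $B(z,4\rho)$.

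\textbf{Linking $Q_z$ to $\widetilde f$.} For $w\in B(z,4\rho)$ we have
\[
\widetilde f(w)=\frac{(T_fk_w)(w)}{K(w,w)^{1/2}}=\frac{P(fk_w)(w)}{K(w,w)^{1/2}}=\frac{Q_w(w)\,k_w(w)}{K(w,w)^{1/2}}=Q_w(w),
\]
because $k_w(w)=K(w,w)^{1/2}$. So $\widetilde f(w)=Q_w(w)$, but we need $Q_z(w)$. To compare them, note that on the overlap $B(z,8\rho)\cap B(w,8\rho)$, which contains $B(w,4\rho)$, both $Q_z$ and $Q_w$ are $L^q$-close to $f$:
\[
\Bigl(\avg_{B(w,\rho)}|Q_z-Q_w|^q\dd v\Bigr)^{1/q}\lesssim E_f(z)+E_f(w).
\]
Since $Q_z-Q_w$ is holomorphic, the submean inequality turns this into
\[
|Q_z(w)-\widetilde f(w)|\lesssim E_f(z)+E_f(w).
\]
This comparison is the step I expect to be the main obstacle. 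It needs the uniform lower bound for $|k_w|e^{-\varphi}$ on $B(w,8\rho)$ with $|w-z|<4\rho$. That bound holds once $8\rho$ is below the $\varepsilon$ of \eqref{eq:kernel-lower}, with $\rho$ shrunk further if needed, so that $\rho$ is independent of $z$.

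\textbf{Conclusion.} Apply the holomorphic submean inequality to $Q_z$ on $B(z,2\rho)$. For $u\in B(z,\rho)$ we have $B(u,\rho)\subset B(z,2\rho)\subset B(z,4\rho)$, so
\[
M_{q,\rho}Q_z(z)\le\sup_{B(z,\rho)}|Q_z|\lesssim\avg_{B(z,4\rho)}|Q_z|\dd v.
\]
Inserting the pointwise comparison from the previous step gives
\[
M_{q,\rho}Q_z(z)\lesssim\avg_{B(z,4\rho)}\bigl(|\widetilde f(w)|+E_f(z)+E_f(w)\bigr)\dd v(w)=A_{4\rho}(|\widetilde f|)(z)+E_f(z)+A_{4\rho}E_f(z).
\]
Combining this with the first reduction yields \eqref{eq:local-recovery}. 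Finiteness of $E_f$ is used only to make every quantity above finite. Measurability of $E_f$ follows from continuity of $w\mapsto k_w$, so the average $A_{4\rho}E_f$ makes sense.
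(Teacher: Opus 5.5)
Your proposal is correct and follows essentially the same route as the paper. Both arguments use the holomorphic quotient $Q_z=T_fk_z/k_z$ with local $L^q$ distance to $f$ controlled by $E_f(z)$, the identity $Q_w(w)=\widetilde f(w)$, the submean comparison $|Q_z(w)-\widetilde f(w)|\lesssim E_f(z)+E_f(w)$ for $w\in B(z,4\rho)$, and the bound $\sup_{B(z,\rho)}|Q_z|\lesssim A_{4\rho}(|Q_z|)(z)$. The differences are cosmetic: you work on $B(z,8\rho)$ where the paper uses $B(z,16\rho)$. Also, your aside that $E_f$ is measurable really rests on joint measurability of $(u,w)\mapsto H_fk_w(u)$ and Tonelli's theorem. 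Continuity of $w\mapsto k_w$ alone does not give it, because $H_f$ is not assumed bounded.
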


\begin{proof}
Choose $\rho$ so that $k_z$ has no zeros in $B(z,16\rho)$, uniformly in $z$.  Define
\[
Q_z(w)=\frac{T_fk_z(w)}{k_z(w)},
\qquad w\in B(z,16\rho).
\]
Then $Q_z$ is holomorphic, $Q_z(z)=\widetilde f(z)$, and
\begin{equation}\label{eq:f-Q}
\left(\avg_{B(z,16\rho)}|f-Q_z|^q\dd v\right)^{1/q}
\lesssim E_f(z).
\end{equation}
If $w\in B(z,4\rho)$, then $B(w,\rho)$ lies in the domains of both $Q_z$ and $Q_w$.  On this ball,
\[
Q_z-Q_w=(Q_z-f)+(f-Q_w).
\]
The holomorphic submean inequality and \eqref{eq:f-Q} therefore give
\begin{equation}\label{eq:Q-difference}
\begin{aligned}
|Q_z(w)-\widetilde f(w)|
&=|Q_z(w)-Q_w(w)|\\
&\lesssim
\left(\avg_{B(w,\rho)}|Q_z-Q_w|^q\dd v\right)^{1/q}\\
&\le
\left(\avg_{B(w,\rho)}|Q_z-f|^q\dd v\right)^{1/q}
+\left(\avg_{B(w,\rho)}|f-Q_w|^q\dd v\right)^{1/q}\\
&\lesssim E_f(z)+E_f(w).
\end{aligned}
\end{equation}
Since $Q_z$ is holomorphic, its local $L^q$ norm on $B(z,\rho)$ is controlled by its local $L^1$ norm on $B(z,4\rho)$:
\[
\left(\avg_{B(z,\rho)}|Q_z|^q\dd v\right)^{1/q}
\le \sup_{B(z,\rho)}|Q_z|
\lesssim \avg_{B(z,4\rho)}|Q_z(w)|\dd v(w).
\]
Equation \eqref{eq:Q-difference} therefore gives
\[
\left(\avg_{B(z,\rho)}|Q_z|^q\dd v\right)^{1/q}
\lesssim
A_{4\rho}(|\widetilde f|)(z)+E_f(z)+A_{4\rho}E_f(z).
\]
Adding \eqref{eq:f-Q} proves \eqref{eq:local-recovery}.
\end{proof}

Combining the complex-average and Berezin-transform recoveries gives the
required scalar equivalence.

\begin{proposition}\label{prop:scalar-equivalence}
Let $1\le p,q,r<\infty$ and fix $\delta>0$.  Let
$f\in\mathcal D_\varphi\cap L^q_{\loc}$ and assume that
$G_{q,\delta}f\in\Xk$.  Then
\begin{equation*}
\begin{split}
\|M_{q,\delta}f\|_{\Xk}
&\asymp \|\widetilde f\|_{\Xk}+\|G_{q,\delta}f\|_{\Xk}\\
&\asymp \|a_\delta f\|_{\Xk}+\|G_{q,\delta}f\|_{\Xk}.
\end{split}
\end{equation*}
\end{proposition}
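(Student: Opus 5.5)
The second equivalence, with $a_\delta f$, is exactly \cref{lem:complex-average}, so only the Berezin line needs proof. We prove the two inequalities
\[
\|\widetilde f\|_{\Xk}\lesssim\|M_{q,\delta}f\|_{\Xk}
\qquad\text{and}\qquad
\|M_{q,\delta}f\|_{\Xk}\lesssim\|\widetilde f\|_{\Xk}+\|G_{q,\delta}f\|_{\Xk},
\]
and combine them with $G_{q,\delta}f\le M_{q,\delta}f$ from \eqref{eq:G-less-M}.

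\emph{Upper bound for the Berezin transform.} We bound $|\widetilde f|$ pointwise by a convolution of $M_{q,R}f$ with an exponentially decaying kernel. By \eqref{eq:kernel-upper},
\[
|\widetilde f(z)|\lesssim\int_{\C^n}|f(w)|e^{-2c|z-w|}\dd v(w).
\]
Split the integral over the cells $E_m$ of a lattice partition. H\"older's inequality on each cell, together with $v(E_m)\asymp1$, gives
\[
\int_{E_m}|f|\dd v\lesssim M_{q,R}f(a_m)
\]
for a fixed $R$. Hence
\[
|\widetilde f(z)|\lesssim\sum_m e^{-c'|z-a_m|}M_{q,R}f(a_m).
\]
As in the proof of \cref{lem:hankel-columns}, this is a discrete convolution by an $\ell^1$ sequence. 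Symmetry of $\xk$ and the triangle inequality (compare \eqref{eq:convolution-Xk}) bound it in $\Xk$ by $\|\{M_{q,R}f(a_m)\}\|_{\xk}$. \Cref{lem:discretization} then converts this to $\|M_{q,\delta}f\|_{\Xk}$. Alternatively, one could argue at the operator level: $\pi_r(T_f)\lesssim\pi_r(M_f)$, followed by \cref{prop:operator-diagonal} and \cref{cor:multiplication}. This requires knowing $M_{q,\delta}f\in\Xk$, which is the only case in which the bound is meaningful. The direct convolution argument avoids that issue, so I would use it.

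\emph{Lower bound via local scalar recovery.} Assume $G_{q,\delta}f\in\Xk$. By \cref{lem:hankel-columns}, $\|E_f\|_{\Xk}\lesssim\|G_{q,\delta}f\|_{\Xk}$. The radius change is handled by \cref{lem:discretization}, and the same lemma shows $E_f$ is finite everywhere, since $\xk\subset\ell^\infty$ and the cell suprema are finite. Hence \cref{lem:local-recovery} applies for a small $\rho$ and gives the pointwise bound \eqref{eq:local-recovery}. Taking $\Xk$-norms, with solidity and the triangle inequality, and using \eqref{eq:convolution-Xk} for the normalized ball kernel of $A_{4\rho}$, we get
\[
\|M_{q,\rho}f\|_{\Xk}
\lesssim\|E_f\|_{\Xk}+\|\widetilde f\|_{\Xk}
\lesssim\|G_{q,\delta}f\|_{\Xk}+\|\widetilde f\|_{\Xk}.
\]
Radius independence in \cref{lem:discretization} replaces $\rho$ by $\delta$.

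The main obstacle is verifying the finiteness hypothesis and the measurability and cellwise essential-supremum bookkeeping for $E_f$ and $A_{4\rho}E_f$. In particular, the convolution estimate \eqref{eq:convolution-Xk} is applied to the nonnegative function $E_f$, which requires knowing that $E_f$ is measurable. I would handle this by replacing $E_f$ with its majorant
\[
\Bigl(C\int_{\C^n}e^{-c|z-w|}G_{q,R_1}f(w)^q\dd v(w)\Bigr)^{1/q}
\]
from \eqref{eq:hankel-column-convolution}. This majorant is measurable, and it is finite because $G_{q,R_1}f$ is bounded. Its $\Xk$-norm is controlled exactly as in the proof of \eqref{eq:E-norm}.
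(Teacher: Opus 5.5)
Your proposal is correct and follows the paper's proof essentially step for step. You use \cref{lem:complex-average} for the $a_\delta f$ line, the exponential-kernel cell convolution for $\|\widetilde f\|_{\Xk}\lesssim\|M_{q,R}f\|_{\Xk}$, and \cref{lem:hankel-columns,lem:local-recovery} together with \eqref{eq:convolution-Xk} and radius independence for the reverse inequality; your replacement of $E_f$ by its measurable, finite majorant from \eqref{eq:hankel-column-convolution} only makes explicit bookkeeping that the paper leaves implicit.
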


\begin{proof}
The fixed-ball averages in \eqref{eq:local-recovery} are bounded on
$\Xk$ by \eqref{eq:convolution-Xk}.  Apply
\cref{lem:hankel-columns,lem:local-recovery}.  We obtain
\[
\|M_{q,\delta}f\|_{\Xk}
\lesssim
\|\widetilde f\|_{\Xk}+\|G_{q,R}f\|_{\Xk}.
\]
Radius independence gives the first upper bound.  The complex-average
bound follows from \cref{lem:complex-average}.  It keeps the prescribed
averaging radius.

Conversely, \eqref{eq:kernel-upper} gives
\[
|\widetilde f(z)|
\lesssim
\int_{\C^n}e^{-c|z-w|}|f(w)|\dd v(w).
\]
Let $\{a_j\}$ be a unit lattice and let $z$ belong to its $j$th partition cell.  H\"older's inequality on each cell gives
\[
|\widetilde f(z)|
\lesssim\sum_m e^{-c|a_j-a_m|}M_{q,R}f(a_m).
\]
The discrete exponential kernel belongs to $\ell^1$.  Symmetry and the triangle inequality in $\xk$, followed by
\cref{lem:discretization}, therefore give
\[
\|\widetilde f\|_{\Xk}
\lesssim
\|M_{q,R}f\|_{\Xk}.
\]
Finally, $|a_\delta f|\le M_{q,\delta}f$, \eqref{eq:G-less-M}, and radius independence complete the proof.
\end{proof}

The preceding intrinsic estimate has the following explicit function-space
form for every parameter triple.

\begin{corollary}
\label{cor:explicit-scalar-recovery}
Let $1\le p,q,r<\infty$ and fix $\delta>0$.
Put $\kappa=\kappa(p,q,r)$.  If
\[
f\in\mathcal D_\varphi\cap L^q_{\loc},
\qquad
G_{q,\delta}f\in\mathfrak E_\kappa,
\]
then
\begin{equation*}
\begin{split}
\|M_{q,\delta}f\|_{\mathfrak E_\kappa}
&\asymp
\|\widetilde f\|_{\mathfrak E_\kappa}
+\|G_{q,\delta}f\|_{\mathfrak E_\kappa}\\
&\asymp
\|a_\delta f\|_{\mathfrak E_\kappa}
+\|G_{q,\delta}f\|_{\mathfrak E_\kappa}.
\end{split}
\end{equation*}
\end{corollary}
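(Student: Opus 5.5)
This corollary is obtained by combining \cref{prop:scalar-equivalence} with the norm comparison of \cref{prop:explicit-realization}; the only work is to check that each of the three scalar quantities is covered by that comparison. First, the hypothesis $G_{q,\delta}f\in\mathfrak E_\kappa$ is converted into $G_{q,\delta}f\in\Xk$. Since $G_{q,\delta}f$ is the quantity $v_{f,\delta}$ of \cref{lem:discretization}, \eqref{eq:explicit-realization} gives $\|G_{q,\delta}f\|_{\Xk}\asymp\|G_{q,\delta}f\|_{\mathfrak E_\kappa}<\infty$. Hence \cref{prop:scalar-equivalence} applies and yields the two intrinsic equivalences in $\Xk$. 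Likewise $M_{q,\delta}f=u_{f,\delta}$, so its $\Xk$ and $\mathfrak E_\kappa$ norms are comparable, with both sides finite or infinite together.

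The remaining point, and the only real obstacle, is that $\widetilde f$ and $a_\delta f$ are not among the three families in \cref{prop:explicit-realization}. I would not try to prove a general equivalence $\|u\|_{\Xk}\asymp\|u\|_{\mathfrak E_\kappa}$ for these functions. Instead I would bound them from both sides by quantities that are covered.

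For the upper direction, take the $\mathfrak E_\kappa$ analogue of the argument in \cref{prop:scalar-equivalence}. Use $|a_\delta f|\le M_{q,\delta}f$, and combine $|\widetilde f(z)|\lesssim\sum_m e^{-c|a_j-a_m|}M_{q,R}f(a_m)$ on lattice cells with the fact that convolution by an $\ell^1$ sequence is bounded on $\ell^\kappa$, $\ell^\infty$ and $\ell^{q^-}$. For the Orlicz space this follows from the convexity and translation argument, exactly as in the proof of \cref{prop:explicit-realization}. Together with radius independence for $M$, this gives $\|\widetilde f\|_{\mathfrak E_\kappa}+\|a_\delta f\|_{\mathfrak E_\kappa}\lesssim\|M_{q,\delta}f\|_{\mathfrak E_\kappa}$.

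For the lower direction, use the pointwise inequalities \eqref{eq:complex-average-pointwise} and \eqref{eq:local-recovery}. They bound $M_{q,\rho}f$ by $G_{q,4\delta}f$ plus a fixed ball average of $|a_\delta f|$, respectively by $E_f$ plus fixed ball averages of $|\widetilde f|$ and $E_f$. Fixed-radius ball averages are bounded on each $\mathfrak E_\kappa$: for $L^s$ this is Minkowski, for $L^\infty$ it is trivial, and for $L^{q^-}$ it follows from Jensen's inequality with $\Psi_q$ and Fubini.

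The error term $E_f$ is handled by \eqref{eq:hankel-column-convolution}. This bounds $E_f$ by a convolution with $e^{-c|\cdot|}$ of $G_{q,R_1}f^q$, and after discretization by $\sum_m e^{-c|a_j-a_m|/q}G_{q,R_2}f(a_m)$. This lattice convolution is again bounded in $\mathfrak e_\kappa$. Converting back with \eqref{eq:explicit-realization} for $v_{f,R_2}$, and using radius independence, gives $\|E_f\|_{\mathfrak E_\kappa}\lesssim\|G_{q,\delta}f\|_{\mathfrak E_\kappa}$.

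Combining these estimates gives
\[
\|M_{q,\delta}f\|_{\mathfrak E_\kappa}\lesssim\|\widetilde f\|_{\mathfrak E_\kappa}+\|G_{q,\delta}f\|_{\mathfrak E_\kappa}.
\]
The same argument with $a_\delta f$ in place of $\widetilde f$ gives the second equivalence. The reverse bounds follow from the upper direction above and \eqref{eq:G-less-M}. The main care throughout is the $L^{q^-}$ case: every discrete convolution or finite-neighbour step must be justified by convexity of $\Psi_q$ rather than by symmetry of $\xk$.
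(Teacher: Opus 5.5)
Your proposal is correct and follows essentially the same route as the paper: apply the $\mathfrak E_\kappa$ norm directly to the pointwise bounds \eqref{eq:complex-average-pointwise} and \eqref{eq:local-recovery}, control $E_f$ by rerunning \cref{lem:hankel-columns} in $\mathfrak E_\kappa$, and obtain the reverse bounds from $|a_\delta f|\le M_{q,\delta}f$, \eqref{eq:G-less-M}, and kernel decay. The differences are minor. You route the convolution estimates through the lattice space $\mathfrak e_\kappa$ and \cref{prop:explicit-realization}, whereas the paper applies Young's and Jensen's inequalities to the continuous kernels. Also, the intrinsic equivalences from \cref{prop:scalar-equivalence} end up unused; what your first paragraph really supplies is the conversion of the hypothesis to $G_{q,\delta}f\in\Xk$ via \cref{prop:explicit-realization}, so that \cref{lem:ida-decomposition,lem:hankel-columns} apply.
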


\begin{proof}
Every convolution kernel used in
\cref{lem:complex-average,lem:hankel-columns,lem:local-recovery} is bounded
on $\mathfrak E_\kappa$.  Young's inequality gives the $L^s$ case.  The
$L^\infty$ case is immediate.  For $L^{q^-}$, normalize the relevant
convolution kernel $\mathcal L$ by $\|\mathcal L\|_1=1$.  Jensen's
inequality gives
\[
\Psi_q\!\left(\frac{\mathcal L*u(z)}{\lambda}\right)
\le
\mathcal L*\Psi_q\!\left(\frac{u}{\lambda}\right)(z),
\]
and Tonelli's theorem gives the Luxemburg estimate.

The proof of \cref{lem:hankel-columns}, with
$\mathfrak E_\kappa$ in place of $\Xk$, yields
\begin{equation}\label{eq:E-explicit}
\|E_f\|_{\mathfrak E_\kappa}
\lesssim
\|G_{q,\delta}f\|_{\mathfrak E_\kappa}.
\end{equation}
Apply the corresponding function-space norm to
\eqref{eq:local-recovery}.  Using \eqref{eq:E-explicit},
\begin{equation*}
\|M_{q,\delta}f\|_{\mathfrak E_\kappa}
\lesssim
\|\widetilde f\|_{\mathfrak E_\kappa}
+\|G_{q,\delta}f\|_{\mathfrak E_\kappa}.
\end{equation*}
The kernel estimate \eqref{eq:kernel-upper} and Young's inequality give
\begin{equation*}
\|\widetilde f\|_{\mathfrak E_\kappa}
\lesssim
\|M_{q,\delta}f\|_{\mathfrak E_\kappa}.
\end{equation*}
Together with $G_{q,\delta}f\le M_{q,\delta}f$, this proves the first comparison.  The second follows from
\eqref{eq:complex-average-pointwise},
$|a_\delta f|\le M_{q,\delta}f$, and the same convolution estimates.
\end{proof}

\section{Absolutely summing Toeplitz operators}\label{sec:toeplitz}

We now prove the joint theorem and then extract the IDA-symbol formulation.

\begin{proof}[Proof of \cref{thm:intro-main}]
\emph{Graph equivalence.}
Define
\[
W:L_\varphi^q\longrightarrow F_\varphi^q\oplus_qL_\varphi^q,
\qquad
Wu=(Pu,(I-P)u).
\]
The boundedness of $P$ gives
\begin{equation*}
\|Wu\|_{F_\varphi^q\oplus_qL_\varphi^q}
\le C_q\|u\|_{q,\varphi},
\qquad
\|u\|_{q,\varphi}
\le \|Pu\|_{q,\varphi}+\|(I-P)u\|_{q,\varphi}.
\end{equation*}
Define
\[
\Sigma:F_\varphi^q\oplus_qL_\varphi^q\longrightarrow L_\varphi^q,
\qquad
\Sigma(u,v)=\iota u+v.
\]
On $\Gamma$,
\begin{equation*}
\mathscr T_f=WM_f,
\qquad
M_f=\Sigma\mathscr T_f.
\end{equation*}
Hence the ideal property yields
\begin{equation}\label{eq:graph-norm-comparison}
\pi_r(\mathscr T_f)\le C_q\pi_r(M_f),
\qquad
\pi_r(M_f)\le\|\Sigma\|\pi_r(\mathscr T_f).
\end{equation}
Moreover, \eqref{eq:finite-direct-sum} gives
\begin{equation}\label{eq:pair-graph-comparison}
\max\{\pi_r(T_f),\pi_r(H_f)\}
\le\pi_r(\mathscr T_f)
\le\pi_r(T_f)+\pi_r(H_f)
\le2\pi_r(\mathscr T_f).
\end{equation}
Equations \eqref{eq:graph-norm-comparison} and
\eqref{eq:pair-graph-comparison} prove the formal equivalence of (1)--(3).

\emph{Identification of the extensions.}
We verify that the extensions coincide with the operators determined by the
symbol.  Suppose first that $M_f$ is $r$-summing.  Then
\[
T_f=PM_f,
\qquad
H_f=(I-P)M_f
\]
on $\Gamma$, and both right-hand sides extend boundedly.  Conversely, suppose that $T_f$ and $H_f$ extend.  On $\Gamma$,
\begin{equation}\label{eq:multiplication-on-core}
M_f=\iota T_f+H_f.
\end{equation}
To identify the extension, take $g_m\in\Gamma$ with
$g_m\to g$ in $F_\varphi^p$.  The right-hand side of
\eqref{eq:multiplication-on-core} converges to some $u\in L_\varphi^q$.
Since $\varphi$ is bounded on $B(0,R)$,
\[
\|fg_m-u\|_{L^q(B(0,R))}
\le C_R\|fg_m-u\|_{q,\varphi}
\longrightarrow0.
\]
Also,
$g_m\to g$ locally uniformly.  Since $f\in L^q_{\loc}$,
\begin{equation*}
\|fg_m-fg\|_{L^q(B(0,R))}
\le
\|f\|_{L^q(B(0,R))}
\sup_{B(0,R)}|g_m-g|
\longrightarrow0.
\end{equation*}
Uniqueness of the local $L^q$ limit gives $u=fg$ on every ball.  Thus the
extension is multiplication by $f$.

\emph{Local criteria.}
By \cref{cor:multiplication},
\[
\pi_r(M_f)\asymp\|M_{q,\delta}f\|_{\Xk}.
\]
This proves (1)$\Longleftrightarrow$(4).  For every $1\le q<\infty$,
\cref{lem:complex-average} gives
\[
\|M_{q,\delta}f\|_{\Xk}
\asymp
\|a_\delta f\|_{\Xk}+\|G_{q,\delta}f\|_{\Xk},
\]
so (4)$\Longleftrightarrow$(5).  This also completes
\eqref{eq:intro-main-norm}.

Finally, condition (4) implies
$G_{q,\delta}f\in\Xk$ by \eqref{eq:G-less-M}; condition (6) contains this
membership.  Under either condition, \cref{prop:scalar-equivalence} gives
\[
\|M_{q,\delta}f\|_{\Xk}
\asymp
\|\widetilde f\|_{\Xk}+\|G_{q,\delta}f\|_{\Xk}.
\]
Hence (4)$\Longleftrightarrow$(6), and
\eqref{eq:intro-main-scalar-norm} follows.
\end{proof}

\begin{proof}[Proof of \cref{cor:intro-ida}]
By \cref{thm:hankel},
\begin{equation}\label{eq:Hf-summing}
H_f\in\Piabs_r(F_\varphi^p,L_\varphi^q),
\qquad
\pi_r(H_f)\asymp\|G_{q,\delta}f\|_{\Xk}.
\end{equation}
On the common core $\Gamma$,
\[
M_f=\iota T_f+H_f,
\qquad
T_f=PM_f.
\]
The density of $\Gamma$ gives the same identities for the unique bounded
extensions.  Hence
\begin{align}
\pi_r(M_f)
&\le \pi_r(T_f)+\pi_r(H_f)
\lesssim \pi_r(T_f)+\|G_{q,\delta}f\|_{\Xk},
\notag\\
\pi_r(T_f)
&\le \|P\|_{L_\varphi^q\to F_\varphi^q}\pi_r(M_f).
\label{eq:matching-reverse}
\end{align}
Thus
\[
T_f\in\Piabs_r
\quad\Longleftrightarrow\quad
M_f\in\Piabs_r
\quad\Longleftrightarrow\quad
M_{q,\delta}f\in\Xk.
\]
Proposition~\ref{prop:scalar-equivalence} then gives the equivalent
$a_\delta f$ and $\widetilde f$ conditions.  Combining
\eqref{eq:Hf-summing}--\eqref{eq:matching-reverse} with
\eqref{eq:multiplication-norm} proves \eqref{eq:intro-ida-norm}.
\end{proof}

\section{Consequences, endpoints, and sharpness}\label{sec:consequences}

We first record consequences that can be used without referring to the proof of the main theorem.

\begin{corollary}\label{cor:one-sided}
Let $1\le p,q,r<\infty$, fix $\delta>0$, and let
$f\in\mathcal D_\varphi\cap L^q_{\loc}$.  If
\[
M_{q,\delta}f\in\Xk,
\]
then $T_f\in\Piabs_r(F_\varphi^p,F_\varphi^q)$ and
\[
\pi_r(T_f)\lesssim\|M_{q,\delta}f\|_{\Xk}.
\]
No IDA assumption is required in this direction.
\end{corollary}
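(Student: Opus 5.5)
The plan is to factor $T_f$ through the multiplication operator and use the ideal property. By \cref{cor:multiplication}, the hypothesis $M_{q,\delta}f\in\Xk$ gives
\[
M_f\in\Piabs_r(F_\varphi^p,L_\varphi^q),
\qquad
\pi_r(M_f)\asymp\|M_{q,\delta}f\|_{\Xk}.
\]
Here $M_f$ denotes the extension of the core operator $g\mapsto fg$ from $\Gamma$. Since $P:L_\varphi^q\to F_\varphi^q$ is bounded for $1\le q<\infty$ under \eqref{eq:hessian}, the ideal property yields
\[
\pi_r(PM_f)\le\|P\|_{L_\varphi^q\to F_\varphi^q}\,\pi_r(M_f)\lesssim\|M_{q,\delta}f\|_{\Xk}.
\]

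It remains to check that $PM_f$ is the required extension of the core operator $T_f$. By the convention in \cref{sec:preliminaries}, the statement $T_f\in\Piabs_r$ means that the core values $T_fg=P(fg)$, $g\in\Gamma$, lie in $F_\varphi^q$ and that $g\mapsto P(fg)$ admits an $r$-summing extension. For $g\in\Gamma$, the product $fg$ lies in $L_\varphi^q$, since this is part of the conclusion of \cref{cor:multiplication}.

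The one point needing care is that $P(fg)$ has two meanings: the integral against $K(\cdot,w)e^{-2\varphi(w)}$, and the bounded extension of $P$ to $L_\varphi^q$. These must agree for $fg\in L_\varphi^q\cap L_\varphi^1$-type functions. The hypothesis $f\in\mathcal D_\varphi$ makes the integral defining $P(fg)$ absolutely convergent for $g=k_z$, and hence for every $g\in\Gamma$ by linearity. Moreover, the bounded extension of $P$ to $L_\varphi^q$ is itself given by the same absolutely convergent kernel integral whenever that integral converges. One can see this by approximating $fg$ by truncations $fg\mathbf1_{B(0,N)}\mathbf1_{\{|f|\le N\}}$, which lie in $L_\varphi^2\cap L_\varphi^q$. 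These converge to $fg$ in $L_\varphi^q$, and also dominatedly in the kernel integral at each point, using \eqref{eq:kernel-upper} and $fk_z\in L_\varphi^1$. So the two meanings coincide, and $T_f=PM_f$ on $\Gamma$.

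Finally, by \eqref{eq:kernel-span-dense}, the extension $PM_f$ is the unique bounded extension of $T_f$. This gives $T_f\in\Piabs_r(F_\varphi^p,F_\varphi^q)$ with $\pi_r(T_f)\lesssim\|M_{q,\delta}f\|_{\Xk}$. No IDA input is used anywhere: \cref{cor:multiplication} needs only $f\in L^q_{\loc}$. I expect the only real obstacle to be the identification of the integral projection with the bounded extension on the core. Everything else is a one-line application of the ideal property.
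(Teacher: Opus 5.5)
Your proposal is correct and follows the paper's proof: the paper also invokes the multiplication-operator corollary to get $M_f\in\Piabs_r(F_\varphi^p,L_\varphi^q)$ with $\pi_r(M_f)\asymp\|M_{q,\delta}f\|_{\Xk}$, and then applies the ideal property to $T_f=PM_f$. Your truncation-and-dominated-convergence check, showing that the kernel integral $P(fg)$ agrees with the bounded extension of $P$ on $L_\varphi^q$ for $g\in\Gamma$, is sound. The paper simply takes this identification for granted.
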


\begin{proof}
By \cref{cor:multiplication}, $M_f$ is $r$-summing.  Since $T_f=PM_f$, the conclusion follows from the ideal property.
\end{proof}

The converse implication needs IDA in general.  Without IDA, the operator
diagonal still gives the following necessary condition.

\begin{corollary}\label{cor:scalar-necessity}
Let $1\le p<\infty$, $1\le q<\infty$, and $1\le r<\infty$.  If
$f\in\mathcal D_\varphi$ and
\[
T_f\in\Piabs_r(F_\varphi^p,F_\varphi^q),
\]
then $\widetilde f\in\Xk$ and
\[
\|\widetilde f\|_{\Xk}\lesssim\pi_r(T_f).
\]
No IDA assumption is used.
\end{corollary}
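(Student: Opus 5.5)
This corollary follows from \cref{prop:operator-diagonal} applied with $A=T_f$. First I check that $T_f$ fits that proposition. By the convention fixed in the preliminaries, the hypothesis $T_f\in\Piabs_r(F_\varphi^p,F_\varphi^q)$ means that the core operator $g\mapsto P(fg)$ on $\Gamma$ has a (unique, by \eqref{eq:kernel-span-dense}) $r$-summing extension $A$ from $F_\varphi^p$ to $F_\varphi^q$. No IDA condition on $f$ and no local integrability beyond $f\in\mathcal D_\varphi$ is needed here: \cref{prop:operator-diagonal} is stated for an arbitrary $A\in\Piabs_r(F_\varphi^p,F_\varphi^q)$.

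Next I identify the diagonal function $d_A$ with the Berezin transform. Each $k_z$ lies in $\Gamma$, so $Ak_z=T_fk_z=P(fk_z)$ is the core value. Since $f\in\mathcal D_\varphi$, the vector $fk_z$ lies in $L_\varphi^1$, and the integral defining $P(fk_z)(z)$ converges absolutely. The reproducing formula then gives \eqref{eq:berezin-evaluation}, and therefore
\[
d_A(z)=\frac{(T_fk_z)(z)}{K(z,z)^{1/2}}=\widetilde f(z)
\]
for every $z\in\C^n$. This is exactly the final assertion of \cref{prop:operator-diagonal}.

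Finally, \eqref{eq:operator-diagonal} gives $\widetilde f=d_A\in\Xk$ with $\|\widetilde f\|_{\Xk}\lesssim\pi_r(T_f)$. The constant depends only on the structural data and on the separation used in the Rademacher diagonal argument. The only step needing care is the pointwise identification $d_{T_f}=\widetilde f$ on all of $\C^n$. This is immediate because the kernels lie in the core $\Gamma$, so the extension agrees with $P(f\,\cdot)$ there. Hence no approximation argument is needed. The continuity of $d_A$ used in passing to the cellwise essential supremum was already established inside the proof of \cref{prop:operator-diagonal}.
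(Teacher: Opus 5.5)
Your proposal is correct and matches the paper's proof, which simply applies \cref{prop:operator-diagonal} to $A=T_f$ and uses the identification $d_{T_f}=\widetilde f$ from \eqref{eq:berezin-evaluation}. Your extra remark is also sound: the kernels $k_z$ lie in the core $\Gamma$, so the extension agrees with $P(f\,\cdot)$ there and the pointwise identification needs no approximation.
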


\begin{proof}
Apply \cref{prop:operator-diagonal} to $A=T_f$.
\end{proof}

We now specialize the graph, Hankel, and Toeplitz criteria to $q=1$.

\begin{corollary}[The complete $q=1$ endpoint]
Let $1\le p,r<\infty$, let $s_p$ be given by
\eqref{eq:q-one-exponent-intro}, and let
$f\in\mathcal D_\varphi\cap L^1_{\loc}$.  Then the following assertions
hold for every $\delta>0$.
\begin{enumerate}
\item Without an IDA assumption,
\begin{equation}\label{eq:q-one-unconditional}
\begin{aligned}
M_f\in\Piabs_r(F_\varphi^p,L_\varphi^1)
&\iff \mathscr T_f\in
\Piabs_r(F_\varphi^p,F_\varphi^1\oplus_1L_\varphi^1)\\
&\iff
\left\{
\begin{array}{l}
T_f\in\Piabs_r(F_\varphi^p,F_\varphi^1),\\[-1pt]
H_f\in\Piabs_r(F_\varphi^p,L_\varphi^1),
\end{array}
\right.\\
&\iff M_{1,\delta}f\in L^{s_p}\\
&\iff a_\delta f,\ G_{1,\delta}f\in L^{s_p}\\
&\iff \widetilde f,\ G_{1,\delta}f\in L^{s_p}.
\end{aligned}
\end{equation}
The corresponding norms satisfy
\begin{equation}\label{eq:q-one-unconditional-norms}
\begin{aligned}
\pi_r(M_f)
&\asymp\pi_r(\mathscr T_f)
\asymp\pi_r(T_f)+\pi_r(H_f)\\
&\asymp\|M_{1,\delta}f\|_{L^{s_p}}\\
&\asymp\|a_\delta f\|_{L^{s_p}}
+\|G_{1,\delta}f\|_{L^{s_p}}\\
&\asymp\|\widetilde f\|_{L^{s_p}}
+\|G_{1,\delta}f\|_{L^{s_p}}.
\end{aligned}
\end{equation}

\item The Hankel part is independent of the analytic component:
\begin{equation}\label{eq:q-one-hankel}
H_f\in\Piabs_r(F_\varphi^p,L_\varphi^1)
\iff G_{1,\delta}f\in L^{s_p},
\qquad
\pi_r(H_f)\asymp\|G_{1,\delta}f\|_{L^{s_p}}.
\end{equation}

\item If $G_{1,\delta}f\in L^{s_p}$, then
\begin{equation}\label{eq:q-one-equivalences}
\begin{aligned}
T_f\in\Piabs_r(F_\varphi^p,F_\varphi^1)
&\iff M_f\in\Piabs_r(F_\varphi^p,L_\varphi^1)\\
&\iff M_{1,\delta}f\in L^{s_p}\\
&\iff a_\delta f\in L^{s_p}
\iff\widetilde f\in L^{s_p}.
\end{aligned}
\end{equation}
Moreover,
\begin{equation}\label{eq:q-one-norms}
\begin{aligned}
\pi_r(T_f)+\|G_{1,\delta}f\|_{L^{s_p}}
&\asymp\pi_r(M_f)+\|G_{1,\delta}f\|_{L^{s_p}}\\
&\asymp\|M_{1,\delta}f\|_{L^{s_p}}\\
&\asymp\|a_\delta f\|_{L^{s_p}}
+\|G_{1,\delta}f\|_{L^{s_p}}\\
&\asymp\|\widetilde f\|_{L^{s_p}}
+\|G_{1,\delta}f\|_{L^{s_p}}.
\end{aligned}
\end{equation}
\end{enumerate}
The exponent $s_p$ is independent of $r$.  The equivalence constants may
depend on $p,r,n$, the Hessian bounds, and the fixed radius $\delta$.
They are independent of $f$, of ball centers, and of lattice translates.
\end{corollary}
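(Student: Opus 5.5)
The plan is to specialize the results already proved to $q=1$. The only new input is the identification $\mathfrak D_{p,1}^{\,r}=L^{s_p}$, with equivalent norms, for the three families of local quantities. First I would check that $\kappa(p,1,r)=s_p$ in \eqref{eq:kappa-full}. For $1\le p\le2$ the first line gives $(1/p'+1/2)^{-1}=2p/(3p-2)$. For $2<p<\infty$ we have $1<p'$, so the third line applies and gives $\kappa=q=1$. Since $q=1$ is never in $\mathcal E$ or in the logarithmic range, \cref{prop:q-one-diagonal} gives $\mathfrak d_{p,1}^{\,r}=\ell^{s_p}$ with $\pi_r(D_b)\asymp\|b\|_{\ell^{s_p}}$. \Cref{prop:explicit-realization} then gives $\|u\|_{\mathfrak D_{p,1}^{\,r}}\asymp\|u\|_{L^{s_p}}$ for $u=M_{1,t}f$ and $u=G_{1,t}f$.

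For item (1) I would apply \cref{thm:intro-main} with $q=1$. Conditions (1)--(3) of that theorem and their norm comparisons hold verbatim; note that the graph argument only uses boundedness of $P$ on $L^1_\varphi$, which is assumed in the preliminaries. The remaining equivalences need $a_\delta f$ and $\widetilde f$ in $L^{s_p}$, and these are not among the three families in \cref{prop:explicit-realization}. So rather than transferring their $\mathfrak D$-norms directly, I would work in $\mathfrak E_{s_p}=L^{s_p}$ throughout, using the explicit estimates already available.

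\begin{itemize}
\item For $a_\delta f$: the pointwise bound \eqref{eq:complex-average-pointwise} and Young's inequality give $\|M_{1,\delta}f\|_{L^{s_p}}\lesssim\|G_{1,4\delta}f\|_{L^{s_p}}+\|a_\delta f\|_{L^{s_p}}$. The reverse inequality follows from $|a_\delta f|\le M_{1,\delta}f$ and \eqref{eq:G-less-M}. Radius independence of the $G$-norm comes from \cref{lem:discretization} combined with \cref{prop:explicit-realization}.
\item For $\widetilde f$: I would invoke \cref{cor:explicit-scalar-recovery} with $\kappa=s_p$. Its hypothesis $G_{1,\delta}f\in L^{s_p}$ holds both when $M_{1,\delta}f\in L^{s_p}$ (by \eqref{eq:G-less-M}) and when it is assumed in the last condition of \eqref{eq:q-one-unconditional}.
\end{itemize}

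This closes the chain in \eqref{eq:q-one-unconditional} and \eqref{eq:q-one-unconditional-norms}.

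Item (2) is \cref{thm:hankel} at $q=1$, which was explicitly proved there through the endpoint estimate \eqref{eq:canonical-solution-q-one}, followed by the identification of norms just described. Item (3) is \cref{cor:intro-ida} with $q=1$. I would translate the hypothesis $G_{1,\delta}f\in L^{s_p}$ into $f\in\IDA_{p,1}^{r}$ via the norm identification, and then translate each $\mathfrak D$-norm in \eqref{eq:intro-ida-norm} to $L^{s_p}$ as above. This gives \eqref{eq:q-one-equivalences} and \eqref{eq:q-one-norms}. Independence of $r$ is inherited from \cref{prop:q-one-diagonal}.

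The main point needing care is that $\widetilde f$ and $a_\delta f$ are not covered by \cref{prop:explicit-realization}, so their $\mathfrak D$-norms cannot simply be replaced by $L^{s_p}$-norms. I would handle this as indicated above, by routing those comparisons through \cref{cor:explicit-scalar-recovery} and Young's inequality in $L^{s_p}$ directly. Alternatively, one could note that both functions are convolution-dominated, or continuous and locally comparable across cells, so that the cell-supremum step function is comparable in $L^{s_p}$. The first route needs no new estimate.
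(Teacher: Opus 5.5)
Your proof is correct, and its overall structure is the paper's. You identify $\mathfrak D_{p,1}^{\,r}$ with $L^{s_p}$ on the local quantities through \cref{prop:q-one-diagonal} and \cref{prop:explicit-realization}, and then specialize \cref{thm:intro-main}, \cref{thm:hankel} and \cref{cor:intro-ida} to $q=1$.

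The one real difference is how you reach the $a_\delta f$ and $\widetilde f$ conditions. The paper's proof records the $L^{s_p}$ realization only for $\mu$-densities, $M_{1,t}f$ and $G_{1,t}f$, and then reads every condition of \cref{thm:intro-main} through it. You instead prove the $L^{s_p}$ forms of conditions (5) and (6) directly: (5) from \eqref{eq:complex-average-pointwise} with Young's inequality, and (6) from \cref{cor:explicit-scalar-recovery} with $\kappa=s_p$.

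Your route is the one that actually justifies the statement, because the two norms of $a_\delta f$ are not comparable by themselves. Take $e=(1,0,\ldots,0)$, small $\epsilon>0$, and $f=N(\mathbf 1_{B(0,\epsilon)}-\mathbf 1_{B(3\epsilon e,\epsilon)})$ with $N\,v(B(0,\epsilon))=1$. Then:
\begin{enumerate}
\item $a_\delta f$ is continuous and $|a_\delta f|\le v(B(0,\delta))^{-1}$;
\item equality holds at $z=-se$ for $\delta-2\epsilon\le s\le\delta-\epsilon$, since there $B(z,\delta)$ contains the first small ball and misses the second;
\item $a_\delta f$ vanishes outside the $2\epsilon$-neighbourhoods of the spheres $|z|=\delta$ and $|z-3\epsilon e|=\delta$.
\end{enumerate}
Since $\|b\|_{\ell^\infty}\le\|b\|_{\mathfrak d_{p,1}^{\,r}}$, this gives $\|a_\delta f\|_{\mathfrak D_{p,1}^{\,r}}\ge v(B(0,\delta))^{-1}$. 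On the other hand, $\|a_\delta f\|_{L^{s_p}}=O(\epsilon^{1/s_p})\to0$. So only the sums with the $G_{1,\delta}f$ term are comparable across the two norms, which is exactly what your direct argument proves.

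The same example shows that the alternative you sketch at the end fails. Cellwise comparability of $a_\delta f$, meaning cell suprema controlled by local $L^{s_p}$ norms, is false because of cancellation. Keep the first route and drop that remark.
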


\begin{proof}
By \cref{prop:q-one-diagonal,lem:discretization}, for each local quantity
$u=u_{\mu,t},u_{f,t},v_{f,t}$,
\begin{equation}\label{eq:q-one-continuous-realization}
\|u\|_{\mathfrak D_{p,1}^{\,r}}
\asymp\|u\|_{L^{s_p}}.
\end{equation}
Indeed, discretization reduces both sides to the same step sequence, and
\eqref{eq:q-one-diagonal-norm} identifies its norm with
$\ell^{s_p}$.  Now \cref{thm:intro-main} and
\eqref{eq:q-one-continuous-realization} give
\eqref{eq:q-one-unconditional}--\eqref{eq:q-one-unconditional-norms}.
Theorem~\ref{thm:hankel} gives \eqref{eq:q-one-hankel}.  Under the stated
IDA condition, \cref{cor:intro-ida} gives
\eqref{eq:q-one-equivalences}--\eqref{eq:q-one-norms}.
\end{proof}

The intrinsic condition can now be written without lattice notation for the
full parameter range.

\begin{corollary}\label{cor:complete-parameter}
Let $1\le p,q,r<\infty$, fix $\delta>0$, and let
$f\in\mathcal D_\varphi\cap L^q_{\loc}$ satisfy
$G_{q,\delta}f\in\Xk$.  Put
\[
\Phi_f=M_{q,\delta}f.
\]
When $1\le p\le2$ and $1\le q\le2$, also put
$s_0=(1/p'+1/q-1/2)^{-1}$.
Define
\begin{equation}\label{eq:complete-parameter-space}
\mathcal Y_{p,q,r}=
\begin{cases}
L^{s_0},
&1\le p\le2,\ 1\le q\le2,\\[2pt]
L^\infty,
&p=1,\ 2<q<\infty,\\[2pt]
L^q,
&2<p<\infty,\ 1\le q<p',\\[2pt]
L^{q^-},
&2<p<\infty,\ q=p',\ 1\le r<q,\\[2pt]
L^q,
&2<p<\infty,\ q=p',\ q\le r<\infty,\\[2pt]
L^{\max\{p',\min\{r,q\}\}},
&2\le p<\infty,\ p'<q<\infty,\\[2pt]
L^{p'},
&1<p<2<q<\infty,\ 1\le r\le p',\\[2pt]
L^r,
&1<p<2,\ p'<q<\infty,\ p'<r\le q,
\\[2pt]
L^{\frac{p'q(r-2)}{(p'-2)(q-2)+2(r-2)}},
&1<p<2<q<\infty,\ r>\max\{p',q\},
\end{cases}
\end{equation}
Then
\begin{equation*}
T_f\in\Piabs_r(F_\varphi^p,F_\varphi^q)
\quad\Longleftrightarrow\quad
\Phi_f\in\mathcal Y_{p,q,r}.
\end{equation*}
In every line of \eqref{eq:complete-parameter-space},
\begin{equation}\label{eq:complete-parameter-norm}
\pi_r(T_f)+\|G_{q,\delta}f\|_{\mathcal Y_{p,q,r}}
\asymp
\|\Phi_f\|_{\mathcal Y_{p,q,r}}.
\end{equation}
At the fourth line, every norm in \eqref{eq:complete-parameter-norm} is interpreted as the Luxemburg norm in $L^{q^-}$.
\end{corollary}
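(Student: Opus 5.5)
The plan is to reduce the statement to \cref{cor:intro-ida} together with the explicit realization of the intrinsic norm in \cref{prop:explicit-realization}.

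First, we check that the displayed space $\mathcal Y_{p,q,r}$ is exactly $\mathfrak E_{\kappa(p,q,r)}$ for the index in \eqref{eq:kappa-full}. Since $q<\infty$ throughout, we compare line by line. The line ``$p=1$, $2<q\le\infty$'' becomes $L^\infty$. The two lines with $q=p'<2$ give $L^{q^-}$ for $r<q$ and $L^q$ for $r\ge q$. The case $p=\infty$ does not occur. The line $2\le p$, $p'<q$ keeps $\max\{p',\min\{r,q\}\}$, and the remaining lines are copied verbatim. Hence $\mathcal Y_{p,q,r}=\mathfrak E_\kappa$ as sets with equal norms.

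Second, we apply \cref{prop:explicit-realization} to the two local quantities $u_{f,\delta}=M_{q,\delta}f=\Phi_f$ and $v_{f,\delta}=G_{q,\delta}f$. This gives
\[
\|\Phi_f\|_{\Xk}\asymp\|\Phi_f\|_{\mathcal Y_{p,q,r}},\qquad
\|G_{q,\delta}f\|_{\Xk}\asymp\|G_{q,\delta}f\|_{\mathcal Y_{p,q,r}},
\]
with finiteness on one side equivalent to finiteness on the other. In the Orlicz line $L^{q^-}$, the Luxemburg-norm version proved there is used. In particular, the hypothesis $G_{q,\delta}f\in\Xk$ places $f$ in $\IDA_{p,q}^r$, so the hypotheses of \cref{cor:intro-ida} are in force. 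That corollary yields $T_f\in\Piabs_r(F_\varphi^p,F_\varphi^q)\iff M_{q,\delta}f\in\Xk$, which by the first display is $\Phi_f\in\mathcal Y_{p,q,r}$. The norm relation \eqref{eq:intro-ida-norm} reads
\[
\pi_r(T_f)+\|G_{q,\delta}f\|_{\Xk}\asymp\|M_{q,\delta}f\|_{\Xk}.
\]
Replacing both $\Xk$ norms by $\mathcal Y_{p,q,r}$ norms via the two comparisons gives \eqref{eq:complete-parameter-norm}.

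The main obstacle is bookkeeping at the boundary cases of \eqref{eq:kappa-full}, namely $p=2$ and $q=2$, and $q=p'$. There the first line, the line $2\le p$, $p'<q$, and the line $q<p'$ overlap. We must confirm that the assignments in \eqref{eq:complete-parameter-space} agree with \eqref{eq:kappa-full} on every overlap. For instance, at $p=2$ the value $s_0=(1/2+1/q-1/2)^{-1}=q$ must coincide with the value from the line $2\le p$, $p'<q$ when $q\le 2$; this is immediate since $p'=2\ge q$ excludes that line. We also confirm that the fourth line is read with the Luxemburg norm, so that \cref{prop:explicit-realization} applies verbatim. No further analysis beyond \cref{cor:intro-ida} is needed.
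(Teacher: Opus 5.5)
Your proposal is correct and matches the paper's proof. Both arguments apply \cref{prop:explicit-realization} to $M_{q,\delta}f$ and $G_{q,\delta}f$ to replace the $\Xk$ norms by $\mathcal Y_{p,q,r}$ norms, then invoke \cref{cor:intro-ida} for the equivalence and for the norm relation; your line-by-line check that $\mathcal Y_{p,q,r}=\mathfrak E_{\kappa(p,q,r)}$ (where the cases turn out not to overlap, e.g.\ at $p=2$) is bookkeeping the paper leaves implicit.
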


\begin{proof}
By \cref{prop:explicit-realization},
\begin{align*}
\|M_{q,\delta}f\|_{\Xk}
&\asymp\|M_{q,\delta}f\|_{\mathcal Y_{p,q,r}},\\
\|G_{q,\delta}f\|_{\Xk}
&\asymp\|G_{q,\delta}f\|_{\mathcal Y_{p,q,r}}.
\end{align*}
Therefore \cref{cor:intro-ida} gives
\begin{align*}
T_f\in\Piabs_r
&\iff M_{q,\delta}f\in\Xk
\iff M_{q,\delta}f\in\mathcal Y_{p,q,r},\\
\pi_r(T_f)+\|G_{q,\delta}f\|_{\mathcal Y_{p,q,r}}
&\asymp\|M_{q,\delta}f\|_{\mathcal Y_{p,q,r}}.
\end{align*}
\end{proof}

The following example displays the dependence of the local exponent on the
summing index $r$.

\begin{example}\label{ex:concrete-indices}
Fix $\delta>0$ and let $f\in\mathcal D_\varphi\cap L^4_{\loc}$.  For
$p=q=4$,
\eqref{eq:kappa-diagonal-intro} gives
\[
\kappa(4,4,1)=\frac43,
\qquad
\kappa(4,4,2)=2,
\qquad
\kappa(4,4,6)=4.
\]
Hence
\begin{align*}
G_{4,\delta}f\in L^{4/3}
&\Longrightarrow
\left[
T_f\in\Piabs_1(F_\varphi^4,F_\varphi^4)
\iff M_{4,\delta}f\in L^{4/3}
\right],\\
G_{4,\delta}f\in L^2
&\Longrightarrow
\left[
T_f\in\Piabs_2(F_\varphi^4,F_\varphi^4)
\iff M_{4,\delta}f\in L^2
\right],\\
G_{4,\delta}f\in L^4
&\Longrightarrow
\left[
T_f\in\Piabs_6(F_\varphi^4,F_\varphi^4)
\iff M_{4,\delta}f\in L^4
\right].
\end{align*}
The target exponent changes from $4/3$ to $2$ and then to $4$.

Now let $f\in\mathcal D_\varphi\cap L^4_{\loc}$ and
$G_{4,\delta}f\in L^\infty$.  Since
\[
\kappa(1,4,r)=\infty,
\qquad 1\le r<\infty,
\]
\cref{cor:complete-parameter} gives
\[
T_f\in\Piabs_r(F_\varphi^1,F_\varphi^4)
\iff M_{4,\delta}f\in L^\infty,
\qquad 1\le r<\infty.
\]
\end{example}

The last line is important.  It converts the former exceptional diagonal
range into an explicit Toeplitz criterion.  If $(p,q,r)\in\mathcal E$ and
\[
\kappa=\frac{p'q(r-2)}{(p'-2)(q-2)+2(r-2)},
\]
then, under the matching condition $G_{q,\delta}f\in L^\kappa$,
\[
T_f\in\Piabs_r(F_\varphi^p,F_\varphi^q)
\quad\Longleftrightarrow\quad
M_{q,\delta}f\in L^\kappa.
\]
The same statement holds with $M_{q,\delta}f$ replaced by
$a_\delta f$ or $\widetilde f$.  This is a consequence of the completed
diagonal index in \cite{FanHeWangZeng2026} and of the Fock-space graph and
scalar-recovery results proved here.

The same-exponent case reduces to the explicit index in
\eqref{eq:kappa-diagonal-intro}.

\begin{corollary}\label{cor:equal-exponents}
Let $1\le p<\infty$, $1\le r<\infty$, fix $\delta>0$, and put
$\kappa(p,r)=\kappa(p,p,r)$ as in \eqref{eq:kappa-diagonal-intro}.  If
\[
f\in\mathcal D_\varphi\cap L^p_{\loc},
\qquad
G_{p,\delta}f\in L^{\kappa(p,r)},
\]
then
\[
T_f\in\Piabs_r(F_\varphi^p,F_\varphi^p)
\quad\Longleftrightarrow\quad
M_{p,\delta}f\in L^{\kappa(p,r)}.
\]
Equivalently, $\widetilde f\in L^{\kappa(p,r)}$, and
\[
\pi_r(T_f)+\|G_{p,\delta}f\|_{L^{\kappa(p,r)}}
\asymp
\|M_{p,\delta}f\|_{L^{\kappa(p,r)}}.
\]
\end{corollary}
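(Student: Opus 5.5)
This is the special case $q=p$ of \cref{cor:complete-parameter}, or equivalently of \cref{cor:intro-ida} combined with \cref{prop:explicit-realization}. The plan is to make that reduction explicit and then read the index off the equal-exponent table \eqref{eq:kappa-diagonal-intro}.

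\emph{Step 1: identify the diagonal ideal.} Put $q=p$. We check that \eqref{eq:kappa-full} at $q=p$ reduces to \eqref{eq:kappa-diagonal-intro}, case by case.
\begin{itemize}
\item For $p\le2$ the first line gives $(1/p'+1/p-1/2)^{-1}=2$.
\item For $p>2$ we have $p'<2<p$, so the line ``$2\le p$, $p'<q$'' applies. It gives $\max\{p',\min\{r,p\}\}$, which equals $p'$, $r$ or $p$ according as $r\le p'$, $p'\le r\le p$ or $r\ge p$.
\item The value $p=2$ lies on both the first line and this one, and both give $2$. So there is no conflict.
\item The logarithmic and exceptional lines require $q\ne p$ or $q>2>p$, so they do not arise.
\end{itemize}
Hence \cref{thm:diagonal} gives $\mathfrak d_{p,p}^{\,r}=\ell^{\kappa(p,r)}$ with equivalent norms. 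Since $\kappa(p,r)\in[1,\infty)$ in every case, this is a genuine Lebesgue space.

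\emph{Step 2: transfer to the continuous norms.} Apply \cref{prop:explicit-realization} with $q=p$ to the local quantities $u_{f,\delta}=M_{p,\delta}f$ and $v_{f,\delta}=G_{p,\delta}f$. This gives
\[
\|M_{p,\delta}f\|_{\mathfrak D_{p,p}^{\,r}}\asymp\|M_{p,\delta}f\|_{L^{\kappa(p,r)}},\qquad
\|G_{p,\delta}f\|_{\mathfrak D_{p,p}^{\,r}}\asymp\|G_{p,\delta}f\|_{L^{\kappa(p,r)}}.
\]
For $\widetilde f$, which is not one of the three local families, we instead use \cref{cor:explicit-scalar-recovery} directly.

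\emph{Step 3: conclude.} The hypothesis $G_{p,\delta}f\in L^{\kappa(p,r)}$ is therefore exactly $f\in\IDA_{p,p}^{r}$. \Cref{cor:intro-ida} then gives $T_f\in\Piabs_r(F_\varphi^p,F_\varphi^p)$ if and only if $M_{p,\delta}f\in\mathfrak D_{p,p}^{\,r}$, which by Step 2 is $M_{p,\delta}f\in L^{\kappa(p,r)}$. It also gives the norm relation $\pi_r(T_f)+D_f\asymp\|M_{p,\delta}f\|_{\mathfrak D_{p,p}^{\,r}}$, and Step 2 converts each term into its $L^{\kappa(p,r)}$ form.

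For the Berezin condition, \cref{cor:explicit-scalar-recovery} with $\mathfrak E_\kappa=L^{\kappa(p,r)}$ shows that, under the IDA hypothesis, $M_{p,\delta}f\in L^{\kappa(p,r)}$ if and only if $\widetilde f\in L^{\kappa(p,r)}$.

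The main obstacle is only the bookkeeping in Step 1: making sure the piecewise definition of $\kappa$ is consistent at the boundary values $p=2$, $r=p'$ and $r=p$. At each of these the adjacent formulas agree ($2$, $p'$ and $p$ respectively), so the closed-interval convention in \eqref{eq:kappa-diagonal-intro} is harmless.
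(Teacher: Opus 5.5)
Your proposal is correct and matches the paper's proof: you identify $\mathfrak d_{p,p}^{\,r}=\ell^{\kappa(p,r)}$ via \eqref{eq:garling-identification} (using $(p,p,r)\notin\mathcal E$), transfer the $M_{p,\delta}f$ and $G_{p,\delta}f$ norms with \cref{prop:explicit-realization}, and conclude with \cref{cor:intro-ida} for $T_f$ and \cref{cor:explicit-scalar-recovery} for $\widetilde f$. One harmless slip in Step 1: at $p=q=2$ the line ``$2\le p$, $p'<q$'' of \eqref{eq:kappa-full} does not apply, because $p'=q=2$; so $p=2$ is covered only by the first line, which gives $2$, and your conclusion is unaffected.
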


\begin{proof}
For $p=q$, the triple $(p,p,r)$ does not belong to $\mathcal E$.
Equations \eqref{eq:kappa-diagonal-intro} and
\eqref{eq:garling-identification} give
\[
\mathfrak d_{p,p}^{\,r}=\ell^{\kappa(p,r)}.
\]
Proposition~\ref{prop:explicit-realization} yields
\begin{align*}
\|M_{p,\delta}f\|_{\mathfrak D_{p,p}^{\,r}}
&\asymp\|M_{p,\delta}f\|_{L^{\kappa(p,r)}},\\
\|G_{p,\delta}f\|_{\mathfrak D_{p,p}^{\,r}}
&\asymp\|G_{p,\delta}f\|_{L^{\kappa(p,r)}}.
\end{align*}
The operator equivalence follows from \cref{cor:intro-ida}.  The equivalent Berezin-transform formulation follows from
\cref{cor:explicit-scalar-recovery}.
\end{proof}

At $p=q=2$, the result agrees with the Hilbert--Schmidt criterion.

\begin{corollary}
Let $S_2$ be the Hilbert--Schmidt ideal.  Thus, for an orthonormal basis
$(e_j)$, an operator $A$ belongs to $S_2$ when
$\sum_j\|Ae_j\|^2<\infty$; its norm is the square root of this sum.
Fix $\delta>0$, let
$f\in\mathcal D_\varphi\cap L^2_{\loc}$, and assume that
$G_{2,\delta}f\in L^2$.
For every $1\le r<\infty$, the following are equivalent:
\[
T_f\in\Piabs_r(F_\varphi^2,F_\varphi^2),
\qquad
T_f\in S_2(F_\varphi^2),
\qquad
M_{2,\delta}f\in L^2.
\]
Moreover,
\[
\pi_r(T_f)+\|G_{2,\delta}f\|_2
\asymp
\|T_f\|_{S_2}+\|G_{2,\delta}f\|_2
\asymp
\|M_{2,\delta}f\|_2.
\]
\end{corollary}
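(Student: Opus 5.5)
The plan is to specialize \cref{cor:equal-exponents} to $p=2$ and then compare the summing norm with the Hilbert--Schmidt norm.

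\emph{Step 1: the diagonal index at $p=2$.} By \eqref{eq:kappa-diagonal-intro}, $\kappa(2,2,r)=2$ for every $1\le r<\infty$; the first line applies because $p=2\le2$. Hence $\mathfrak d_{2,2}^{\,r}=\ell^2$ with equivalent norms by \cref{thm:diagonal}. Since $G_{2,\delta}f\in L^2$ is the matching IDA hypothesis, \cref{cor:equal-exponents} gives
\[
T_f\in\Piabs_r(F_\varphi^2,F_\varphi^2)\iff M_{2,\delta}f\in L^2,
\qquad
\pi_r(T_f)+\|G_{2,\delta}f\|_2\asymp\|M_{2,\delta}f\|_2 .
\]
This settles the first equivalence and its norm estimate.

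\emph{Step 2: Hilbert--Schmidt versus summing.} I would use the classical Hilbert-space fact that $\Piabs_r(H,H)=S_2(H)$ with $\pi_r(A)\asymp\|A\|_{S_2}$ for every $1\le r<\infty$. The constants depend only on $r$, for instance through Grothendieck's theorem and Pietsch factorization; see \cite{DiestelJarchowTonge1995}.

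Applying this fact to $A=T_f$ requires that $T_f$ already be a bounded operator on $F_\varphi^2$. This holds under either side of the desired equivalence, because both $\pi_r(T_f)$ and $\|T_f\|_{S_2}$ dominate the operator norm. Hence $T_f\in S_2$ if and only if $T_f\in\Piabs_r$, with
\[
\|T_f\|_{S_2}\asymp\pi_r(T_f).
\]
Adding $\|G_{2,\delta}f\|_2$ to both sides and combining with Step 1 gives the displayed chain of norm equivalences.

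\emph{Main obstacle.} The only delicate point is whether the identification $\Piabs_r=S_2$ on Hilbert space may be quoted. If it must be avoided, I would use an alternative that stays inside the paper's own framework. Take $r=2$, where $\pi_2=\|\cdot\|_{S_2}$ exactly on Hilbert spaces. Since $\kappa(2,2,r)=2$ for all $r$, Step 1 shows that membership in $\Piabs_r$ for any single $r$ is equivalent to membership for $r=2$. The resulting norm comparison has constants independent of $f$. This removes the need for Grothendieck's theorem except for the standard identity $\pi_2=\|\cdot\|_{S_2}$.
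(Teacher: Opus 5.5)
Your main route is the paper's proof: you note $\kappa(2,2,r)=2$, apply \cref{cor:equal-exponents}, and then use the Hilbert-space identity $\Piabs_r=S_2$ with $\pi_r\asymp\|\cdot\|_{S_2}$ from \cite{DiestelJarchowTonge1995}. Your fallback through $r=2$, using $\pi_2=\|\cdot\|_{S_2}$ and the fact that $\kappa(2,2,r)=2$ for every $r$, is also valid; its $r$-independence, however, comes from Garling's diagonal classification and is not free.
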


\begin{proof}
For $p=q=2$, \eqref{eq:kappa-diagonal-intro} gives
$\kappa(2,2,r)=2$.  Moreover, for Hilbert spaces $H_1,H_2$,
\begin{equation}\label{eq:hilbert-pi-S2}
\Piabs_r(H_1,H_2)=S_2(H_1,H_2),
\qquad
\pi_r(A)\asymp\|A\|_{S_2}.
\end{equation}
The constants in this standard Hilbert-space identity may depend on $r$;
see \cite{DiestelJarchowTonge1995}.
Apply \cref{cor:equal-exponents} and then
\eqref{eq:hilbert-pi-S2}.
\end{proof}

Order continuity also gives compactness for the Toeplitz--Hankel graph.

\begin{corollary}
Let $1\le p,q,r<\infty$ and fix $\delta>0$.
Assume that $c_{00}$ is dense in $\xk$ and that
\[
f\in\mathcal D_\varphi\cap L^q_{\loc},
\qquad
G_{q,\delta}f\in\Xk.
\]
If $T_f\in\Piabs_r(F_\varphi^p,F_\varphi^q)$, then
\begin{equation*}
M_f:F_\varphi^p\to L_\varphi^q,
\qquad
T_f:F_\varphi^p\to F_\varphi^q,
\qquad
H_f:F_\varphi^p\to L_\varphi^q
\end{equation*}
are compact.  In particular, this holds for every parameter triple for
which $\kappa(p,q,r)\ne\infty$.
\end{corollary}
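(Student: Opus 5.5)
The plan is to pass through the multiplication operator and then invoke the compactness results already proved for $M_f$. Under the matching hypothesis $G_{q,\delta}f\in\Xk$, \cref{thm:hankel} shows that $H_f\in\Piabs_r(F_\varphi^p,L_\varphi^q)$. On the core $\Gamma$ we have the identity $M_f=\iota T_f+H_f$. Since both $T_f$ and $H_f$ are $r$-summing, the triangle inequality \eqref{eq:summing-triangle} shows that $M_f$ admits an $r$-summing extension. The identification-of-extensions step in the proof of \cref{thm:intro-main} shows that this extension is genuinely multiplication by $f$. Equivalently, \cref{cor:intro-ida} gives $M_{q,\delta}f\in\Xk$ and $M_f\in\Piabs_r$ directly.

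Next I would apply \cref{cor:multiplication-compact}. Its density hypothesis on $c_{00}$ is exactly the one assumed here, so $M_f$ is compact. The same corollary then gives compactness of $T_f=PM_f$ and $H_f=(I-P)M_f$, because $P$ and $I-P$ are bounded on $L_\varphi^q$. Underneath, this rests on \cref{prop:carleson-compact} applied to $\dd\mu_f=|f|^q\dd v$, whose tail estimate \eqref{eq:compact-tail-estimate} uses \cref{lem:sequence-tail}(1). No new analysis is needed for this step.

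For the final sentence, I must check that $c_{00}$ is dense in $\xk$ whenever $\kappa(p,q,r)\ne\infty$. By \cref{thm:diagonal}, $\xk=\mathfrak e_\kappa$ with equivalent norms, and the two cases are:
\begin{itemize}
\item if $\kappa\in[1,\infty)$, then $\mathfrak e_\kappa=\ell^\kappa$, in which finitely supported sequences are dense;
\item if $\kappa=q^-$, then $\mathfrak e_\kappa=\ell^{q^-}$, and the local $\Delta_2$-condition $\Psi_q(2t)\le C_q\Psi_q(t)$ for small $t$ gives density of $c_{00}$, as recorded in \cref{sec:diagonal-index}.
\end{itemize}
Density is preserved under equivalent norms, so the hypothesis holds in both cases. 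When $\kappa=\infty$ we have $\mathfrak e_\kappa=\ell^\infty$, which is excluded. Indeed, there compactness can fail, as the constant symbol shows.

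The main point needing care is the first step: $T_f\in\Piabs_r$ alone does not control $M_f$ without IDA. The matching hypothesis is exactly what supplies $H_f$, so the obstacle is handled by \cref{thm:hankel}. Beyond that, one only has to confirm that the extension of $\iota T_f+H_f$ coincides with $M_f$, which is the local $L^q$-limit argument already given.
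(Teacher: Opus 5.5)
Your proposal is correct and follows the paper's proof essentially step for step: \cref{thm:hankel} gives $H_f\in\Piabs_r$, the core identity $M_f=\iota T_f+H_f$ with the ideal triangle inequality gives $M_f\in\Piabs_r$, and \cref{cor:multiplication-compact} then yields compactness of $M_f$, $T_f=PM_f$, and $H_f=(I-P)M_f$. Your check that $c_{00}$ is dense in $\xk$ whenever $\kappa(p,q,r)\ne\infty$ (via $\ell^\kappa$, or the local $\Delta_2$ property of $\Psi_q$ in the case $\kappa=q^-$) is a correct justification of the final sentence, which the paper leaves implicit.
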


\begin{proof}
By \cref{thm:hankel},
\[
H_f\in\Piabs_r,
\qquad
\pi_r(H_f)\asymp\|G_{q,\delta}f\|_{\Xk}.
\]
Hence \eqref{eq:multiplication-on-core} and the ideal triangle inequality give
\[
M_f\in\Piabs_r,
\qquad
\pi_r(M_f)\le\pi_r(T_f)+\pi_r(H_f).
\]
Corollary~\ref{cor:multiplication-compact} shows that $M_f$ is compact.  Since
\[
T_f=PM_f,
\qquad
H_f=(I-P)M_f,
\]
the other two operators are compact.
\end{proof}

For analytic symbols, the IDA term vanishes.  The main theorem then forces
rigidity.

\begin{corollary}
Let $1\le p,q,r<\infty$, put $\kappa=\kappa(p,q,r)$, and let
$f\in\mathcal D_\varphi$ be entire.  Then
\begin{equation*}
T_f\in\Piabs_r(F_\varphi^p,F_\varphi^q)
\quad\Longleftrightarrow\quad
\begin{cases}
f=0,&\kappa\ne\infty,\\
f\text{ is constant},&\kappa=\infty.
\end{cases}
\end{equation*}
\end{corollary}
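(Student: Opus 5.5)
For entire $f$ the IDA quantity vanishes identically: $G_{q,\delta}f\equiv0$, since $f$ itself is an admissible holomorphic approximant on every ball. Hence $G_{q,\delta}f\in\Xk$ with zero norm, and $f\in\IDA_{p,q}^r$ provided $f\in L^q_{\loc}$, which holds by continuity. \cref{cor:intro-ida} therefore applies and gives
\[
T_f\in\Piabs_r(F_\varphi^p,F_\varphi^q)\iff M_{q,\delta}f\in\Xk,
\qquad
\pi_r(T_f)\asymp\|M_{q,\delta}f\|_{\Xk}.
\]
So the corollary reduces to the following claim about entire functions: the normalized local $q$-mean of an entire $f$ lies in $\Xk$ exactly when $f=0$ (if $\kappa\ne\infty$), resp.\ exactly when $f$ is constant (if $\kappa=\infty$).

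\emph{Case $\kappa=\infty$.} By \cref{prop:explicit-realization}, $\|M_{q,\delta}f\|_{\Xk}\asymp\|M_{q,\delta}f\|_{L^\infty}$. If $M_{q,\delta}f$ is bounded, the holomorphic submean inequality gives $|f(z)|\lesssim M_{q,\delta}f(z)$, so $f$ is a bounded entire function and hence constant by Liouville. Conversely, a constant $c$ has $M_{q,\delta}c\equiv|c|\in L^\infty$. Such an $f$ lies in $\mathcal D_\varphi$ by \eqref{eq:kernel-norm}, and then $T_c=c\,I$ is in $\Piabs_r$ by the equivalence above.

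\emph{Case $\kappa\ne\infty$.} Now $\mathfrak e_\kappa$ is $\ell^\kappa$ with $\kappa<\infty$ or $\ell^{q^-}$, and in either case $\mathfrak d_{p,q}^{\,r}\subset c_0$. For $\ell^{q^-}$ this follows because the Orlicz sum $\sum_j\Psi_q(|b_j|/\lambda)$ is finite, forcing $\Psi_q(|b_j|/\lambda)\to0$ and hence $b_j\to0$. Using \cref{lem:discretization}, $M_{q,R}f(a_j)\to0$ along a lattice, and the submean inequality $|f(z)|\lesssim M_{q,R}f(a_j)$ for $z\in E_j$ then gives $f(z)\to0$ as $|z|\to\infty$. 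So $f$ is a bounded entire function vanishing at infinity, hence $f\equiv0$. The converse is trivial, since $T_0=0$.

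The only step needing some care is the inclusion $\mathfrak d_{p,q}^{\,r}\subset c_0$ in the logarithmic Orlicz case, together with checking that the lattice values indeed control $\sup_{E_j}|f|$. The Orlicz inclusion is handled by the observation above that $\Psi_q(t)\to0$ forces $t\to0$, and the cell bound follows from the fixed-radius submean estimate, which is uniform in the center.
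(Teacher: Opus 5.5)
Your proof is correct. Its skeleton is the paper's: since $G_{q,\delta}f\equiv0$, everything reduces to deciding when $M_{q,\delta}f\in\Xk$. The paper reaches this by writing $H_f=0$, $M_f=\iota T_f$ and invoking \cref{thm:intro-main}; you invoke \cref{cor:intro-ida}, which amounts to the same thing. The case $\kappa=\infty$ is handled identically in both, by the submean inequality and Liouville.

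The difference is in the vanishing step for $\kappa\ne\infty$.
\begin{itemize}
\item The paper passes to the explicit function space via \cref{prop:explicit-realization} and deduces $f\in L^s(\C^n)$. In the logarithmic case it gets $f\in L^q(\C^n)$ from the lower bound $\Psi_q(t)\ge ct^q$. It then kills $f$ with the submean inequality on balls $B(z,R)$, letting $R\to\infty$.
\item You stay at the sequence level. The inclusion $\xk\subset c_0$ forces $|f|$ to tend to zero at infinity, and a bounded entire function vanishing at infinity is zero.
\end{itemize}

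Your route treats $\ell^\kappa$ and $\ell^{q^-}$ uniformly and avoids the comparison $\Psi_q(t)\ge ct^q$. It proves the dichotomy exactly in the form stated in the introduction, namely $f=0$ when $\xk\subset c_0$. In fact it works for any solid sequence lattice contained in $c_0$; the explicit classification is needed only to sort the values of $\kappa$ into the two cases. The paper's route leans more on the explicit realization, but it yields the stronger intermediate fact $f\in L^s(\C^n)$ directly.

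A small simplification: you can skip \cref{lem:discretization}. The pointwise bound $|f(z)|\le M_{q,\delta}f(z)$ already gives $\sup_{E_j}|f|\le (M_{q,\delta}f)_j^\#$, and these cell suprema tend to zero because they form a sequence in $\xk\subset c_0$.
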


\begin{proof}
Since $f$ is entire,
\[
G_{q,\delta}f=0,
\qquad
H_f=0,
\qquad
M_f=\iota T_f.
\]
Hence \cref{thm:intro-main,prop:explicit-realization} give
\begin{equation}\label{eq:analytic-local-criterion}
T_f\in\Piabs_r
\iff M_{q,\delta}f\in\mathfrak E_\kappa.
\end{equation}
If $\kappa=s<\infty$, then $M_{q,\delta}f\in L^s$.  The holomorphic
submean inequality gives
\[
|f(z)|\le M_{q,\delta}f(z),
\]
and hence $f\in L^s(\C^n)$.  For every $R>0$,
\[
|f(z)|^s
\le \frac{C}{R^{2n}}
\int_{B(z,R)}|f(w)|^s\dd v(w)
\le \frac{C}{R^{2n}}\|f\|_{L^s}^s.
\]
Letting $R\to\infty$ gives $f(z)=0$.

If $\kappa=q^-$, the chosen Young function satisfies
$\Psi_q(t)\ge c t^q$ for $t\ge0$.  Thus
\[
M_{q,\delta}f\in L^{q^-}
\quad\Longrightarrow\quad
f\in L^q,
\]
and the same argument gives $f=0$.  If $\kappa=\infty$, then
$M_{q,\delta}f\in L^\infty$, so $f$ is bounded and therefore constant.
The converse implications follow from \eqref{eq:analytic-local-criterion}.
\end{proof}

At the opposite extreme from analytic symbols, compact support gives an
automatic summability criterion.

\begin{corollary}
Let $1\le p,q,r<\infty$, fix $\delta>0$, and let
$f\in L^q(\C^n)$ have compact support.
Then
\[
M_f,\quad T_f,\quad H_f
\]
are $r$-summing and compact.  Let $S=\supp f$, where $\supp f$ is the
support of $f$, and define
\[
\dist(z,S)=\inf_{w\in S}|z-w|,
\qquad
S_\delta=\{z:\dist(z,S)<\delta\}.
\]
Then
\begin{equation}\label{eq:compact-support-bound}
\pi_r(M_f)+\pi_r(T_f)+\pi_r(H_f)
\lesssim
\|f\|_{L^q}\,
\|\mathbf1_{J(S_\delta)}\|_{\mathfrak d_{p,q}^{\,r}},
\end{equation}
where $J(S_\delta)$ is the finite set of reference cells meeting
$S_\delta$.
\end{corollary}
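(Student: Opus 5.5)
The plan is to reduce everything to the multiplication operator through \cref{cor:multiplication} and the identities $T_f=PM_f$, $H_f=(I-P)M_f$. First, $f\in L^q$ with compact support lies in $L^q_{\loc}$. It also lies in $\mathcal D_\varphi$: on the compact set $S$ the kernel $k_z$ is bounded by \eqref{eq:kernel-upper}, so $fk_z\in L_\varphi^1$ by H\"older. Hence all three core operators are defined on $\Gamma$.

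Next I estimate $\|M_{q,\delta}f\|_{\Xk}$ using the reference partition \eqref{eq:reference-partition}. If $z\in E_j$ and $B(z,\delta)\cap S\neq\varnothing$, then $z\in S_\delta$, so $E_j$ meets $S_\delta$ and $j\in J(S_\delta)$. Otherwise $M_{q,\delta}f(z)=0$. In every case
\[
M_{q,\delta}f(z)\le |B(0,\delta)|^{-1/q}\|f\|_{L^q}.
\]
Therefore
\[
(M_{q,\delta}f)_j^\#\le |B(0,\delta)|^{-1/q}\|f\|_{L^q}\mathbf 1_{J(S_\delta)}(j).
\]
Solidity of $\xk$ (\cref{thm:diagonal}) gives
\[
\|M_{q,\delta}f\|_{\Xk}\lesssim\|f\|_{L^q}\|\mathbf 1_{J(S_\delta)}\|_{\xk}.
\]
This norm is finite: $J(S_\delta)$ is finite because $S_\delta$ is bounded, and a finitely supported diagonal map has finite rank, hence lies in every $\Piabs_r$. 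Applying \cref{cor:multiplication} yields $\pi_r(M_f)\lesssim$ the right side of \eqref{eq:compact-support-bound}. The ideal property then gives
\[
\pi_r(T_f)\le\|P\|\,\pi_r(M_f),\qquad \pi_r(H_f)\le\|I-P\|\,\pi_r(M_f),
\]
using boundedness of $P$ on $L_\varphi^q$. Summing these three bounds proves \eqref{eq:compact-support-bound}.

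For compactness I will not invoke order continuity, since $\xk$ may be $\ell^\infty$. Instead I use the direct Montel argument from the proof of \cref{prop:carleson-compact}. Take a bounded sequence in $F_\varphi^p$ and pass to a subsequence converging locally uniformly to some $g$. Because $S$ is compact, $e^{-\varphi}$ is bounded on $S$, and $f\in L^q(S)$, the products satisfy $fg_m\to fg$ in $L_\varphi^q$. So $M_f$ is compact, and then $T_f=PM_f$ and $H_f=(I-P)M_f$ are compact as well.

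The only mildly delicate point is the support bookkeeping: checking that cells outside $J(S_\delta)$ contribute zero exactly, with the ball radius matching the $\delta$ in $S_\delta$. The step above handles this, since $B(z,\delta)\cap S\neq\varnothing$ forces $z\in S_\delta$, which is exactly what the chosen definition of $S_\delta$ provides.
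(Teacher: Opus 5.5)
Your proposal is correct and takes essentially the same route as the paper. Both arguments verify $f\in\mathcal D_\varphi$ through the kernel bound and H\"older's inequality on $S$. Both then show that $M_{q,\delta}f$ vanishes off the cells in $J(S_\delta)$ and is uniformly bounded by $|B(0,\delta)|^{-1/q}\|f\|_{L^q}$. From there they use the multiplication criterion together with $T_f=PM_f$ and $H_f=(I-P)M_f$, and finish with a direct Montel argument for compactness. The differences are minor: you cite the multiplication corollary directly rather than the main theorem, and you give the finite-rank reason why $\|\mathbf 1_{J(S_\delta)}\|_{\mathfrak d_{p,q}^{\,r}}<\infty$, which the paper leaves implicit.
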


\begin{proof}
For every $z\in\C^n$, H\"older's inequality on $S$ and local boundedness
of $k_z e^{-\varphi}$ give
\[
\int_S|f(w)k_z(w)|e^{-\varphi(w)}\dd v(w)<\infty.
\]
Thus $f\in\mathcal D_\varphi$.
The local size satisfies
\[
M_{q,\delta}f(z)=0\quad(z\notin S_\delta),
\qquad
M_{q,\delta}f(z)\le |B(0,\delta)|^{-1/q}\|f\|_{L^q}.
\]
Therefore
\[
\|M_{q,\delta}f\|_{\Xk}
\lesssim
\|f\|_{L^q}\,
\|\mathbf1_{J(S_\delta)}\|_{\mathfrak d_{p,q}^{\,r}}<\infty.
\]
Theorem~\ref{thm:intro-main} gives the summing assertions and
\eqref{eq:compact-support-bound}.  To prove compactness, let
$\{g_m\}$ be bounded in $F_\varphi^p$.  By Montel's theorem, a subsequence
converges locally uniformly.  Since $f$ is supported on $S$,
\begin{align*}
\|M_fg_m-M_fg\|_{q,\varphi}^q
&=\int_S|f|^q|g_m-g|^qe^{-q\varphi}\dd v\\
&\le
\|f\|_{L^q(S)}^q
\sup_S\bigl(|g_m-g|^qe^{-q\varphi}\bigr)\longrightarrow0.
\end{align*}
Thus $M_f$ is compact.  The identities
$T_f=PM_f$ and $H_f=(I-P)M_f$ complete the proof.
\end{proof}

We next remove the matching IDA assumption for symbols whose values remain
in a fixed open sector.

\begin{corollary}[Sectorial symbols]\label{cor:sectorial}
Let $1\le p,q,r<\infty$.  Suppose that
$f\in\mathcal D_\varphi\cap L^q_{\loc}$.  Assume that there exist
$\eta\in\C$ and $0\le\theta<\pi/2$ such that $|\eta|=1$ and
\begin{equation}\label{eq:sectorial-range}
\operatorname{Re}(\eta f(z))\ge(\cos\theta)|f(z)|
\quad\text{for a.e. }z\in\C^n.
\end{equation}
For a fixed sufficiently small $\delta>0$, assume also that
there is a constant $C_f>0$ such that
\begin{equation}\label{eq:local-reverse-holder}
M_{q,\delta}f(z)\le C_f\,a_\delta(|f|)(z),
\qquad z\in\C^n,
\end{equation}
with $C_f$ independent of $z$.  Then, without an IDA hypothesis,
\[
\begin{aligned}
T_f\in\Piabs_r(F_\varphi^p,F_\varphi^q)
&\quad\Longleftrightarrow\quad M_{q,\delta}f\in\Xk\\
&\quad\Longleftrightarrow\quad a_\delta f\in\Xk
\quad\Longleftrightarrow\quad\widetilde f\in\Xk.
\end{aligned}
\]
Moreover,
\[
\pi_r(T_f)\asymp\|M_{q,\delta}f\|_{\Xk}
\asymp\|a_\delta f\|_{\Xk}
\asymp\|\widetilde f\|_{\Xk},
\]
where the constants may depend on $\theta$ and $C_f$.

If $q=1$, \eqref{eq:local-reverse-holder} holds with $C_f=1$.  Hence every
sectorial $f\in\mathcal D_\varphi\cap L^1_{\loc}$ satisfying
\eqref{eq:sectorial-range} obeys
\[
T_f\in\Piabs_r(F_\varphi^p,F_\varphi^1)
\iff M_{1,\delta}f\in L^{s_p}
\iff a_\delta f\in L^{s_p}
\iff \widetilde f\in L^{s_p},
\]
where $s_p$ is defined by \eqref{eq:q-one-exponent-intro}.  Moreover,
\begin{equation*}
\pi_r(T_f)
\asymp\|M_{1,\delta}f\|_{L^{s_p}}
\asymp\|a_\delta f\|_{L^{s_p}}
\asymp\|\widetilde f\|_{L^{s_p}}.
\end{equation*}
\end{corollary}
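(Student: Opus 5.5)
The plan is to close the cycle
\[
M_{q,\delta}f\in\Xk\ \Longrightarrow\ T_f\in\Piabs_r\ \Longrightarrow\ \widetilde f\in\Xk\ \Longrightarrow\ M_{q,\delta}f\in\Xk,
\]
and then attach $a_\delta f$ by a separate pointwise comparison. The first two arrows need no IDA. By \cref{cor:one-sided}, $\pi_r(T_f)\lesssim\|M_{q,\delta}f\|_{\Xk}$. By \cref{cor:scalar-necessity}, $\|\widetilde f\|_{\Xk}\lesssim\pi_r(T_f)$. So the whole content is the third arrow, which must replace the IDA hypothesis used in \cref{prop:scalar-equivalence}. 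The idea is that the sector condition \eqref{eq:sectorial-range} prevents cancellation inside the Berezin integral.

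\emph{Key step: sectoriality turns $\widetilde f$ into a positive average of $|f|$.}
Since $|k_z(w)|^2e^{-2\varphi(w)}\ge0$, the sector bound gives
\[
|\widetilde f(z)|\ge\operatorname{Re}\bigl(\eta\widetilde f(z)\bigr)
=\int_{\C^n}\operatorname{Re}(\eta f)\,|k_z|^2e^{-2\varphi}\dd v
\ge\cos\theta\int_{\C^n}|f|\,|k_z|^2e^{-2\varphi}\dd v .
\]
Restrict the last integral to $B(z,\varepsilon)$ and use the kernel lower bound \eqref{eq:kernel-lower}. This yields $|\widetilde f(z)|\gtrsim\cos\theta\,a_\varepsilon(|f|)(z)$.

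Now take $\delta\le\varepsilon$; this is where "sufficiently small $\delta$" is used. Then $a_\delta(|f|)\le(\varepsilon/\delta)^{2n}a_\varepsilon(|f|)$. Combining with the reverse H\"older hypothesis \eqref{eq:local-reverse-holder}, we get the pointwise bound
\[
M_{q,\delta}f(z)\le C_f\,a_\delta(|f|)(z)\lesssim_{\theta,C_f}|\widetilde f(z)| .
\]
Solidity of $\Xk$ then gives $\|M_{q,\delta}f\|_{\Xk}\lesssim\|\widetilde f\|_{\Xk}$. The step I would check most carefully is that the comparison is genuinely pointwise at the same center $z$, so no convolution or discretization is required. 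It is: the lower kernel bound is uniform in $z$.

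\emph{Complex average.}
The same argument applied to $\operatorname{Re}(\eta\,a_\delta f)$ gives
\[
|a_\delta f|\ge\cos\theta\,a_\delta(|f|)\ge(\cos\theta/C_f)\,M_{q,\delta}f .
\]
The trivial bound $|a_\delta f|\le M_{q,\delta}f$ gives the reverse. Hence $\|a_\delta f\|_{\Xk}\asymp\|M_{q,\delta}f\|_{\Xk}$.

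For completeness, the bound $\|\widetilde f\|_{\Xk}\lesssim\|M_{q,R}f\|_{\Xk}$ from the converse half of the proof of \cref{prop:scalar-equivalence} also holds without IDA. It uses only \eqref{eq:kernel-upper}, H\"older's inequality and \eqref{eq:convolution-Xk}. Radius independence of $M_{q,\cdot}f$ from \cref{lem:discretization} then lets us pass between $R$ and $\delta$. Chaining all of these gives the stated norm equivalences, with constants depending on $\theta$ and $C_f$.

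\emph{The case $q=1$.}
Here $M_{1,\delta}f=a_\delta(|f|)$, so \eqref{eq:local-reverse-holder} holds with $C_f=1$. By \cref{prop:q-one-diagonal} and \cref{lem:discretization}, as in \eqref{eq:q-one-continuous-realization}, the three local quantities satisfy $\|u\|_{\mathfrak D_{p,1}^{\,r}}\asymp\|u\|_{L^{s_p}}$. For $\widetilde f$, the same discretization applies: in the chain above it is squeezed between multiples of $M_{1,\cdot}f$ at comparable radii. Substituting $L^{s_p}$ for $\Xk$ throughout then gives the endpoint statement.
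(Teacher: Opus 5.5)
Your proposal is correct and follows essentially the same route as the paper: sufficiency from \cref{cor:one-sided}; necessity from \cref{cor:scalar-necessity} combined with positivity of the Berezin kernel, the lower kernel bound on $B(z,\delta)$ and \eqref{eq:local-reverse-holder}; the pointwise sector comparison $|a_\delta f|\asymp M_{q,\delta}f$; the kernel-decay convolution bound $\|\widetilde f\|_{\Xk}\lesssim\|M_{q,\delta}f\|_{\Xk}$; and $M_{1,\delta}f=a_\delta(|f|)$ at $q=1$. The one loose phrase is that $\widetilde f$ is ``squeezed'' between multiples of $M_{1,\cdot}f$: the upper bound is a convolution rather than a pointwise bound, so in $L^{s_p}$ it should be justified either by Young's inequality (as in \cref{cor:explicit-scalar-recovery}) or by $\|u\|_{L^{s_p}}\le\|\{u_j^\#\}\|_{\ell^{s_p}}\asymp\|u\|_{\mathfrak D_{p,1}^{\,r}}$, a step the paper itself also leaves implicit.
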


\begin{proof}
\emph{Necessity.}
Suppose first that $T_f$ is $r$-summing.  By
\cref{cor:scalar-necessity},
\begin{equation}\label{eq:sectorial-necessity}
\|\widetilde f\|_{\Xk}\lesssim\pi_r(T_f).
\end{equation}
The Berezin kernel is positive.  Hence
\begin{align*}
|\widetilde f(z)|
&\ge \operatorname{Re}\bigl(\eta\widetilde f(z)\bigr)\\
&=\int_{\C^n}\operatorname{Re}\bigl(\eta f(w)\bigr)
|k_z(w)|^2e^{-2\varphi(w)}\dd v(w)\\
&\ge(\cos\theta)
\int_{\C^n}|f(w)||k_z(w)|^2e^{-2\varphi(w)}\dd v(w).
\end{align*}
By the choice of $\delta$, \eqref{eq:kernel-lower} holds on
$B(z,\delta)$.  Hence
\begin{equation}\label{eq:sectorial-average-lower}
|\widetilde f(z)|
\gtrsim(\cos\theta)a_\delta(|f|)(z).
\end{equation}
Equations \eqref{eq:local-reverse-holder},
\eqref{eq:sectorial-average-lower}, and
\eqref{eq:sectorial-necessity} give
\begin{equation*}
\|M_{q,\delta}f\|_{\Xk}
\lesssim_{C_f,\theta}\|\widetilde f\|_{\Xk}
\lesssim\pi_r(T_f).
\end{equation*}

\emph{Sufficiency.}
Conversely, \cref{cor:one-sided} gives
\begin{equation*}
M_{q,\delta}f\in\Xk
\Longrightarrow T_f\in\Piabs_r,
\qquad
\pi_r(T_f)\lesssim\|M_{q,\delta}f\|_{\Xk}.
\end{equation*}
\emph{Comparison of the scalar quantities.}
It remains to compare the scalar functions.  First,
\begin{equation*}
|a_\delta f|\le a_\delta(|f|)\le M_{q,\delta}f.
\end{equation*}
On the other hand, \eqref{eq:sectorial-range} gives
\begin{equation*}
|a_\delta f|
\ge \operatorname{Re}(\eta a_\delta f)
\ge(\cos\theta)a_\delta(|f|)
\ge\frac{\cos\theta}{C_f}M_{q,\delta}f.
\end{equation*}
Thus $|a_\delta f|\asymp M_{q,\delta}f$ pointwise.  Finally, if
$z\in E_j$, \eqref{eq:kernel-upper} and H\"older's inequality give
\[
|\widetilde f(z)|
\lesssim
\sum_m e^{-c|j-m|}M_{q,R}f(m).
\]
Equation~\eqref{eq:convolution-Xk} and radius independence yield
\begin{equation*}
\|\widetilde f\|_{\Xk}
\lesssim\|M_{q,\delta}f\|_{\Xk}.
\end{equation*}
The reverse estimate follows from
\eqref{eq:sectorial-average-lower} and
\eqref{eq:local-reverse-holder}.  Since
$M_{1,\delta}f=a_\delta(|f|)$, the final assertion follows with $C_f=1$.
\end{proof}

The sector condition permits genuinely variable complex phases.

\begin{example}
Let $g\ge0$ be measurable.  Let $\psi:\C^n\to\mathbb R$ satisfy
\[
|\psi(z)-\psi_0|\le\theta<\frac\pi2
\quad\text{a.e.},
\qquad
f(z)=g(z)e^{i\psi(z)}.
\]
Then \eqref{eq:sectorial-range} holds with $\eta=e^{-i\psi_0}$.  If
$f\in\mathcal D_\varphi\cap L^1_{\loc}$, \cref{cor:sectorial} gives
\[
T_f\in\Piabs_r(F_\varphi^p,F_\varphi^1)
\iff M_{1,\delta}f\in L^{s_p}
\iff\widetilde f\in L^{s_p}.
\]
The phase $\psi$ need not be constant or continuous.
\end{example}

\begin{remark}
The positive case is $\theta=0$ and $\eta=1$.  For $q>1$, positivity alone
does not compare local $L^q$ and $L^1$ size.  Thus it does not remove
matching IDA.  Condition \eqref{eq:local-reverse-holder} is precisely the
extra comparison used above.  For $q=1$, no extra local regularity is
needed.
\end{remark}

\begin{remark}
Theorem~\ref{thm:intro-main} characterizes $(T_f,H_f)$ without IDA.
For $T_f$ alone, \cref{cor:intro-ida} uses
\[
H_f\in\Piabs_r
\iff G_{q,\delta}f\in\Xk.
\]
Without this information, $M_f=\iota T_f+H_f$ gives no estimate for
$M_f$.  The sectorial class in \cref{cor:sectorial} is an exception.
Removing matching IDA for unrestricted complex symbols remains open.
\end{remark}

\subsection{Sharpness and model symbols}\label{sec:sharpness}

The diagonal ideal is not merely an artifact of the proof.  It is already forced by elementary positive symbols supported on separated balls.

\begin{theorem}[Sharp separated models]\label{thm:sharpness}
Fix $1\le p,q,r<\infty$.  Let $\Lambda=\{a_j\}$ be sufficiently
separated.  Choose $\rho>0$ so that the balls $B(a_j,4\rho)$ are pairwise
disjoint and \eqref{eq:kernel-lower} holds on $B(a_j,\rho)$.  For
$c=(c_j)\in\ell^\infty$, $c_j\ge0$, define
\begin{equation*}
f_c(z)=\sum_jc_j\mathbf1_{B(a_j,\rho)}(z).
\end{equation*}
Then $f_c\in\mathcal D_\varphi\cap L^q_{\loc}$ and
\begin{equation}\label{eq:sharp-equivalence}
T_{f_c}\in\Piabs_r(F_\varphi^p,F_\varphi^q)
\quad\Longleftrightarrow\quad
c\in\xk.
\end{equation}
In that case,
\begin{equation}\label{eq:sharp-norm}
\pi_r(T_{f_c})\asymp\|c\|_{\xk}.
\end{equation}
Thus the diagonal ideal $\mathfrak d_{p,q}^{\,r}$ cannot be replaced by a smaller sequence lattice, even for nonnegative symbols.
\end{theorem}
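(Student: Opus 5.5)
The plan is to apply the sectorial criterion, \cref{cor:sectorial}, with $\theta=0$ and $\eta=1$, after checking its two hypotheses for $f_c$, and then to compute the local quantity $M_{q,\delta}f_c$ explicitly on the lattice.

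\emph{Step 1: admissibility.} Since $0\le f_c\le\|c\|_{\ell^\infty}$, we have $f_c\in L^\infty\subset L^q_{\loc}$. By \eqref{eq:kernel-norm} with $s=1$, $|f_ck_z|e^{-\varphi}\le\|c\|_\infty|k_z|e^{-\varphi}\in L^1$, so $f_c\in\mathcal D_\varphi$. The sector condition \eqref{eq:sectorial-range} holds trivially with $\theta=0$ and $\eta=1$, because $f_c\ge0$.

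\emph{Step 2: the reverse H\"older condition.} This is the main obstacle, because $f_c$ is an indicator-type step function. Take $\delta$ small, say $\delta\le\rho$, as \cref{cor:sectorial} requires. On any ball $B(z,\delta)$, $f_c$ takes only the values $c_j$ and $0$, since the balls $B(a_j,4\rho)$ are disjoint and $\delta<\rho$, so at most one $j$ is involved. Hence
\[
M_{q,\delta}f_c(z)=c_j\,\theta_z^{1/q},
\qquad
a_\delta(|f_c|)(z)=c_j\,\theta_z,
\qquad
\theta_z=\frac{v(B(z,\delta)\cap B(a_j,\rho))}{v(B(z,\delta))}.
\]
The ratio $\theta_z^{1/q-1}$ is unbounded when $\theta_z\to0$, so \eqref{eq:local-reverse-holder} can fail near the boundary spheres. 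I would therefore bypass \cref{cor:sectorial} and argue directly, with the same two ingredients.

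\emph{Step 3: sufficiency.} By \cref{cor:one-sided}, $\pi_r(T_{f_c})\lesssim\|M_{q,\delta}f_c\|_{\Xk}$. Moreover $M_{q,\delta}f_c\le\sum_j c_j\mathbf 1_{B(a_j,\rho+\delta)}$. Each reference cell $E_k$ meets boundedly many of these balls, and each ball meets boundedly many cells. Thus \cref{lem:sequence-tail} and symmetry give $\|M_{q,\delta}f_c\|_{\Xk}\lesssim\|c\|_{\xk}$.

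\emph{Step 4: necessity.} Apply the discrete bound \eqref{eq:operator-diagonal-discrete} of \cref{prop:operator-diagonal} to $A=T_{f_c}$ on the sufficiently separated sequence $\Lambda$ itself. This gives $\|\{\widetilde{f_c}(a_j)\}\|_{\xk}\lesssim\pi_r(T_{f_c})$. Positivity of the Berezin kernel and \eqref{eq:kernel-lower} on $B(a_j,\rho)$ give
\[
\widetilde{f_c}(a_j)\ge c_j\int_{B(a_j,\rho)}|k_{a_j}|^2e^{-2\varphi}\dd v\gtrsim c_j.
\]
Solidity of $\xk$ then yields $\|c\|_{\xk}\lesssim\pi_r(T_{f_c})$.

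Steps 3 and 4 together prove \eqref{eq:sharp-equivalence} and \eqref{eq:sharp-norm}. For the final remark: if a smaller lattice $\mathfrak a\subsetneq\xk$ gave a sufficient-and-necessary criterion, take any $c\in\xk\setminus\mathfrak a$ with $c\ge0$ (solidity lets us pass to $|c|$) and enumerate it along $\Lambda$. Symmetry of $\xk$ makes this enumeration harmless. The operator $T_{f_c}$ is then $r$-summing, yet its symbol data lie outside $\mathfrak a$, a contradiction.
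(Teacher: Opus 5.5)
Your proof is correct and follows the paper's argument. Sufficiency comes from \cref{cor:one-sided} together with a finite-neighbor bound on $M_{q,\delta}f_c$, and necessity comes from the positivity lower bound $\widetilde{f_c}(a_j)\gtrsim c_j$ fed into \eqref{eq:operator-diagonal-discrete}.

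The paper never invokes \cref{cor:sectorial} here. Your observation that \eqref{eq:local-reverse-holder} fails for $q>1$ near the boundary spheres is correct, so the Step~2 detour can simply be dropped. The paper also records the two-sided comparison $\|M_{q,2\rho}f_c\|_{\Xk}\asymp\|c\|_{\xk}$, of which you only need the upper half.
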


\begin{proof}
\emph{The local norm.}
The symbol is bounded.  Hence $f_c\in\mathcal D_\varphi$.  Since the
enlarged balls are disjoint, $B(z,2\rho)$ meets at most one support ball.
If $z\in B(a_j,\rho/2)$, then
$B(a_j,\rho)\subset B(z,2\rho)$.  Therefore
\[
M_{q,2\rho}f_c(z)\lesssim
\sum_jc_j\mathbf1_{B(a_j,3\rho)}(z),
\qquad
M_{q,2\rho}f_c(z)\gtrsim
\sum_jc_j\mathbf1_{B(a_j,\rho/2)}(z).
\]
Let $u=M_{q,2\rho}f_c$ and let $u_k^\#$ be its local suprema on the reference cells.  The first pointwise estimate gives
\[
u_k^\#
\lesssim
\sum_{a_j\in\mathcal N(k)}c_j,
\]
where the neighbor relation has uniformly bounded degree in both variables.  Conversely, if $E_{k(j)}$ contains $a_j$, then the second estimate gives
\[
c_j\lesssim u_{k(j)}^\#.
\]
The map $j\mapsto k(j)$ has uniformly bounded fibers.  Decompose it into finitely many injections.  Symmetry, solidity, and
\cref{lem:sequence-tail} now give
\begin{equation}\label{eq:bump-local-norm}
\|M_{q,2\rho}f_c\|_{\Xk}\asymp\|c\|_{\xk}.
\end{equation}

\emph{Sufficiency and necessity.}
If $c\in\xk$, \cref{cor:one-sided} and \eqref{eq:bump-local-norm} give
$\pi_r(T_{f_c})\lesssim\|c\|_{\xk}$.  Conversely, positivity and the lower kernel estimate give
\[
\widetilde {f_c}(a_j)
=\int_{\C^n}f_c(w)|k_{a_j}(w)|^2e^{-2\varphi(w)}\dd v(w)
\gtrsim c_j.
\]
If $T_{f_c}$ is $r$-summing, \eqref{eq:operator-diagonal-discrete} yields
\[
\|c\|_{\xk}
\lesssim\|\{\widetilde {f_c}(a_j)\}\|_{\xk}
\lesssim\pi_r(T_{f_c}).
\]
This proves \eqref{eq:sharp-equivalence} and \eqref{eq:sharp-norm}.
\end{proof}

The same construction has a sharp complex sectorial form at $q=1$.

\begin{example}
Let $\Lambda=\{a_j\}$ and $\rho$ satisfy the assumptions of
\cref{thm:sharpness}.  Let $c=(c_j)\in\ell^\infty$, $c_j\ge0$, and choose
real numbers $\vartheta_j$ such that
\[
|\vartheta_j-\vartheta_0|\le\theta<\frac\pi2.
\]
Define
\begin{equation*}
f_{c,\vartheta}(z)
=\sum_jc_je^{i\vartheta_j}\mathbf1_{B(a_j,\rho)}(z).
\end{equation*}
Then $f_{c,\vartheta}\in\mathcal D_\varphi\cap L^1_{\loc}$ and is
sectorial with $\eta=e^{-i\vartheta_0}$.  Furthermore,
\begin{equation*}
T_{f_{c,\vartheta}}\in
\Piabs_r(F_\varphi^p,F_\varphi^1)
\quad\Longleftrightarrow\quad
c\in\ell^{s_p},
\qquad
\pi_r(T_{f_{c,\vartheta}})\asymp\|c\|_{\ell^{s_p}}.
\end{equation*}
The constants are uniform in $(\vartheta_j)$ subject to the displayed
sector condition.  The model therefore permits nonconstant,
discontinuous complex phases.
\end{example}

\begin{proof}
Boundedness gives $f_{c,\vartheta}\in\mathcal D_\varphi$.  Since the
support balls are disjoint,
\[
|f_{c,\vartheta}|=
\sum_jc_j\mathbf1_{B(a_j,\rho)}.
\]
The proof of \eqref{eq:bump-local-norm}, with $q=1$, gives
\begin{equation*}
\|M_{1,2\rho}f_{c,\vartheta}\|_{L^{s_p}}
\asymp\|c\|_{\ell^{s_p}}.
\end{equation*}
Now apply \cref{cor:sectorial} and radius independence.  The constants
depend on the phases only through $\cos\theta$.
\end{proof}

The separated model also detects the logarithmic boundary case.

\begin{remark}
Take $p=3$, $q=3/2$, and $r=1$.  Then $\kappa=q^-$.  Theorem~\ref{thm:sharpness} gives
\[
T_{f_c}\in\Piabs_1(F_\varphi^3,F_\varphi^{3/2})
\quad\Longleftrightarrow\quad
c\in\ell^{(3/2)^-}.
\]
Thus the logarithmic Orlicz condition cannot be replaced by
$c\in\ell^{3/2}$.
\end{remark}

\begingroup
\renewcommand{\addcontentsline}[3]{}
\section*{Funding}
The first author was supported by the National Natural Science Foundation of
China (Grant No.~12401154).  The second author was supported by the National
Natural Science Foundation of China (Grant No.~12601234).

\section*{Data availability}
No data were used for the research described in this article.

\section*{Declaration of competing interest}
The authors declare that they have no known competing financial interests or
personal relationships that could have appeared to influence the work
reported in this paper.
\endgroup

\end{document}